\newcommand{\re}{\mathrm{e}}
\newcommand{\ri}{\mathrm{i}}
\newcommand{\rd}{\mathrm{d}}
\newcommand\R[0]{{\mathbb R}}
\newcommand\N[0]{{\mathbb N}}
\newcommand\Z[0]{{\mathbb Z}}
\newcommand\C[0]{{\mathbb C}}
\newcommand{\cond}{{\rm cond \;}}
\newcommand{\esssup}{\mathop{{\rm ess} \sup}}
\newcommand{\essinf}{\mathop{{\rm ess} \inf}}
\newcommand{\Ak}{A_{k,\eta}}
\newcommand{\cA}{A}
\newcommand{\Ake}{A_{k,\eta}}
\newcommand{\Apke}{{A^\prime}_{k,\eta}}
\newcommand{\Apk}{\Apke}
\title{Condition number estimates for combined potential integral operators
in acoustics and their boundary element discretisation}
\author{ T Betcke\footnotemark[1]\ \footnotemark[2]\ \footnotemark[7]\and S N Chandler-Wilde\footnotemark[1]\ \footnotemark[3]\ \footnotemark[7]\and I G Graham\footnotemark[4]\ \footnotemark[7]
       \and S Langdon\footnotemark[1]\ \footnotemark[5]\ \footnotemark[7]\and M Lindner\footnotemark[6]\ \footnotemark[7]}
\begin{document}

\maketitle

\renewcommand{\thefootnote}{\fnsymbol{footnote}}
\footnotetext[1]{Department of Mathematics and Statistics, University of Reading,
Whiteknights, PO Box 220, Berkshire RG6 6AX, UK}
\footnotetext[2]{Email: {\tt t.betcke@reading.ac.uk}}
\footnotetext[3]{Email: {\tt s.n.chandler-wilde@reading.ac.uk}}
\footnotetext[4]{Department of Mathematical Sciences,
University of Bath, Bath BA2 7AY, UK. Email: {\tt I.G.Graham@bath.ac.uk}}
\footnotetext[5]{Email: {\tt s.langdon@reading.ac.uk}}
\footnotetext[6]{TU Chemnitz, Fakult\"{a}t f\"{u}r Mathematik, 09107 Chemnitz, Germany. Email: {\tt mali@hrz.tu-chemnitz.de}}
\footnotetext[7]{The authors gratefully acknowledge  helpful discussions with Alex Barnett (Dartmouth), Peter Chamberlain (Reading), and Michael Levitin (Reading). Timo Betcke is supported by Engineering and Physical Sciences Research Council (EPSRC) Grant EP/H004009/1, Marko Lindner is partially supported by a Marie Curie Fellowship of the European Commission (MEIF-CT-2005-009758), and the other authors are supported by EPSRC Grant EP/F067798/1.}

\renewcommand{\thefootnote}{\arabic{footnote}}

\bibliographystyle{siam}

\begin{abstract}
  We consider the classical coupled, combined-field integral equation formulations for time-harmonic acoustic scattering
  by a sound soft bounded obstacle.  In recent work, we have proved
  lower and upper bounds on the
$L^2$ condition numbers
  for these formulations, and also on the norms of the classical acoustic single- and
  double-layer potential operators. These bounds to some extent make explicit the dependence of condition numbers on the wave number $k$, the geometry of the
  scatterer, and the coupling parameter. For example, with the usual choice of coupling parameter they show that, while the condition number grows like $k^{1/3}$ as $k\to\infty$, when the scatterer is a circle or sphere, it can grow as fast as $k^{7/5}$ for a class of `trapping' obstacles. In this paper we prove further bounds, sharpening and extending our previous results. In particular we show that there exist trapping obstacles for which the condition numbers grow as fast as $\exp(\gamma k)$, for some $\gamma>0$, as $k\to\infty$ through some sequence. This result depends on exponential localisation bounds on Laplace eigenfunctions in an ellipse that we prove in the appendix. We also clarify the correct choice of coupling parameter in 2D for low $k$.  In the second part of the paper we focus on the boundary element discretisation of these operators.
  We discuss the extent to which the bounds on the continuous operators are also satisfied by their discrete
  counterparts and, via numerical experiments, we provide supporting evidence for some of the
  theoretical results, both quantitative and asymptotic, indicating further which of the upper and lower bounds may be
  sharper.
\end{abstract}




\section{Introduction}
\label{sec:intro}
\setcounter{equation}{0}

Consider scattering of a time-harmonic ($\re^{-\ri\omega t}$ time dependence)
acoustic wave $u^i$ by a bounded, sound soft obstacle occupying a compact set
$\Omega\subset \R^d$ ($d=2$ or 3) with Lipschitz boundary $\Gamma$, which is such that the complement set $\Omega_e :=\R^d\setminus \Omega$ is connected.
The
medium of propagation, occupying  $\Omega_e$,  is assumed to be
homogeneous, isotropic and at rest.
Under the assumption that $u^i$ is an entire
solution of the Helmholtz (or reduced wave)
equation with {\em wavenumber} $k=\omega/c>0$
(where $c>0$ denotes the speed of sound), we seek the resulting
time-harmonic acoustic pressure field $u$,  satisfying  the Helmholtz equation
\begin{equation}
  \label{eqn:helmholtz}
  \Delta u + k^2 u = 0 \qquad\textrm{in}\qquad \Omega_e \ .
\end{equation}
This is to be solved subject to
the sound soft boundary condition
\begin{equation}
  \label{DBC}
  u = 0 \qquad\textrm{ on }\qquad \Gamma = \partial \Omega_e,
\end{equation}
and the Sommerfeld radiation condition, which requires that
\begin{equation}
  \label{eqn:src}
  \frac{\partial u^s}{\partial r}-\ri k u^s = o(r^{-(d-1)/2})
\end{equation}
as $r:=|x|\to\infty$, uniformly in $\hat x:= x/r$,
where $u^s:= u-u^i$ represents the scattered part of the field
(see e.g.~\cite{CK83}).
This problem has exactly one solution under the constraint that
$u$ and $\nabla u$ be locally square integrable; see e.g.~\cite{mclean}.

In this paper we consider the two standard second kind boundary integral equation reformulations of (\ref{eqn:helmholtz})--(\ref{eqn:src}).
The first is the {\bf indirect formulation}
\begin{equation}
  \label{eq:central ieq}
  \Ak\varphi\ =\ g,
\end{equation}
where
\[
  \Ak \; := \; I + D_k - \ri\eta S_k,
\]
with $\eta\in\R\backslash\{0\}$ the {\em coupling parameter},
$I$ the identity operator and $S_k$ and $D_k$ the {\em single-} and
{\em double-layer potential operators}.  These are defined for $\varphi\in L^2(\Gamma)$ by
\begin{equation}
  \label{eq:S}
  S_k\varphi(x) \ := \ 2\,\int_{\Gamma} \Phi(x,y)\; \varphi(y) \; ds(y), \quad x \in \Gamma,
\end{equation}
and
\begin{equation}
  \label{eq:D}
  D_k\varphi(x) \ := \ 2\,\int_{\Gamma} \frac{\partial \Phi(x,y)}
  {\partial \nu(y)}\; \varphi(y) \; ds(y), \quad x \in \Gamma,
\end{equation}
with $\partial/\partial \nu(y)$ the
derivative in the normal direction, with the unit normal $\nu(y)$
directed into $\Omega_e$, and $\Phi(x,y)$ the standard
free-space fundamental solution of the Helmholtz equation. This is
given by
\begin{equation}
  \label{eq:Phi}
  \Phi(x,y) := \left\{\begin{array}{cc}
                       \frac{\ri}{4}H_0^{(1)}(k|x-y|), & d=2,
                      \\ \\
                       \displaystyle{\frac{\re^{\ri k|x-y|}}{4\pi|x-y|}}, & d=3,
                    \end{array}\right.
\end{equation}
for $x,y\in\R^d$, $x\neq y$, where $H_0^{(1)}$ is the Hankel
function of the first kind of order zero.  Finally, $g:= -2u^i|_\Gamma = 2u^s|_\Gamma$.

The   second formulation is the {\bf direct formulation}
\begin{equation}
    \Apk \frac{\partial u}{\partial \nu} = f,
    \label{eqn:2ndkind}
\end{equation}
where
\[
  \Apk \ := \ I + D_k^\prime - \ri\eta S_k,
\]
with $D_k^\prime$ the integral operator defined, for $\varphi\in L^2(\Gamma)$, by
\[
  D^\prime_k\varphi(x) \ := \ 2\,\int_{\Gamma} \frac{\partial \Phi(x,y)} {\partial \nu(x)}\; \varphi(y) \; ds(y), \quad x \in \Gamma,
\]
and
\[
  f(x):=2\frac{\partial u^i}{\partial \nu}(x) - 2\ri \eta u^i(x), \quad x\in\Gamma.
\]
It is well known (see~\cite{CWL06} for details, in particular regarding how classical results can be adapted to the general Lipschitz case)
that, for $\eta\neq 0$, $\Ak$ and $\Apk$ are invertible as operators on $L^2(\Gamma)$, and that
\[
  \|\Apk\|=\|\Ak\|, \quad \|{\Apk}^{-1}\|=\|\Ak^{-1}\|.
\]
(Throughout the paper $\Vert \cdot \Vert$ denotes the $L^2$ norm on
$\Gamma$.)

A question that has received much recent attention in the literature
(see for example \cite{Ami90,Ami93,BaSa:07,CWGLL07,CWMonk06,DoGrSm:07,Gi:97,Kr:85,KrSp:83})
is that of determining how the
conditioning of the two standard integral equation formulations, (\ref{eq:central ieq}) and
(\ref{eqn:2ndkind}), depends on the wavenumber $k$, on the coupling parameter $\eta$, and on
the shape of $\Gamma$.  Specifically we are interested in upper and lower bounds on the
(identical) condition numbers of $\Ak$ and $\Apk$, given by
$$
  \cond \Apk = \cond \Ak = \|\Ak\|\,\|\Ak^{-1}\|,
$$
and so we are interested in upper and lower bounds on the norms
of $\Ak$ and its inverse, and also on the norms $\|S_k\|$ and $\|D_k\|$.

In our recent paper~\cite{CWGLL07}, we derived estimates which, to some extent, make explicit the dependence of each of these norms on $k$, $\eta$ and $\Gamma$, with an
emphasis on understanding conditioning in the important but difficult
and relatively neglected case where  $k\to\infty$.
For example, with the usual choice of coupling parameter $\eta=k$, while the condition numbers of $A_{k,\eta}$ and $A_{k,\eta}^\prime$ grow like $k^{1/3}$ as $k\to\infty$ when the scatterer is a circle or sphere \cite{DoGrSm:07}, we show in \cite{CWGLL07} that they grow like $k^{1/2}$ for a starlike polygon and  as fast as $k^{7/5}$ for a class of `trapping' obstacles. In this paper we prove further bounds sharpening and clarifying our previous results, in particular studying trapping obstacles in much more detail.
A main focus of the present paper is also the boundary element discretisation of these operators.  Our aims here are threefold:  to provide supporting evidence for some of the theoretical results of~\cite{CWGLL07} and of \S\ref{sec:previous} via numerical experiments; to determine how sharp the quantitative upper and lower bounds on norms of~\cite{CWGLL07} may be, particularly in the cases where there is a significant gap between the two; to determine the extent to which the bounds on the continuous operators are also satisfied by their discrete counterparts.

We begin in~\S\ref{sec:previous} by summarising the estimates at the continuous level derived in~\cite{CWGLL07}, together with previous related results in the literature. Also, in the 2D case, we sharpen the estimates from~\cite{CWGLL07} at low frequencies, and prove that the choice of $\eta$ in \cite{KrSp:83,Kr:85} (based on analysis for  a circular scatterer) guarantees a bounded condition number in the limit $k\to0$ even for general Lipschitz $\Gamma$. But the main novelty of \S\ref{sec:previous} is that we  show that there exist trapping obstacles for which the condition numbers of $A_{k,\eta}$ and its adjoint  grow as fast as $\exp(\gamma k)$, for some $\gamma>0$, as $k\to\infty$ through some sequence. This result depends on exponential localisation bounds on so-called  `bouncing-ball' type \cite{Keller85} Laplace eigenfunctions  in an ellipse. For completeness we provide a self-contained and relatively elementary proof of this exponential localisation in the appendix; for eigenfunction localisation results in much more general settings proved using related but much more technical arguments see \cite{toth}.

In~\S\ref{sec:discrete} we prove results about the relationship between the continuous integral operators and their discrete counterparts, i.e.\ matrices derived from standard Galerkin boundary element method (BEM) discretisations. In~\S\ref{sec:num} we present numerical results showing Galerkin BEM approximations to $\|\Ak\|$, $\|\Ak^{-1}\|$, $\|S_k\|$ and $\|D_k\|$ for a variety of obstacles, each for a range of values of $k$ and $\eta$.  Finally in~\S\ref{sec:conc} we present some conclusions.

The results of the present  paper and
of \cite{CWGLL07} have direct relevance to the numerical performance
of  boundary
integral methods,
since the condition number of the discretization of
\eqref{eq:central ieq} and \eqref{eqn:2ndkind} appears naturally as a
measure of the difficulty of computing numerical  solutions  in practice. Moreover the results in \cite{CWGLL07}, and more particularly our new results on trapping obstacles,
have direct relevance to a recent detailed  $k-$explicit numerical analysis
of $hp$ boundary integral methods for general Helmholtz scattering
problems in \cite{LoMe10}. There it is shown (for example in
\cite[Corollary 3.18]{LoMe10}) that, provided $\Vert A_{k,k}^{-1}\Vert \leq
C k^\beta$ with $C$ and $\beta$ independent of $k$, then an $hp$ refinement strategy in
which $p$ grows logarithmically in $k$  and $h$ decreases like
$k^{-1} \log k$ yields a Galerkin method which is free from
``pollution'' (i.e.\ the error is bounded by the best possible error
in the finite element space, multiplied by a constant independent of $k$). Our analytical and numerical results are suggestive that
$\Vert A_{k,k}^{-1}\Vert \leq
Ck^\beta$ holds for some $k$-independent $C$ and $\beta$ not only for starlike obstacles, as considered previously in \cite{CWMonk06}, but also for certain trapping obstacles. But also we prove in \S\ref{sec:trapping} that there exist Lipschitz obstacles for which the bound $\Vert A_{k,k}^{-1}\Vert \leq
Ck^\beta$ does not hold for any $C$ and $\beta$. We also note that the paper
\cite{melenk} (a
companion paper to \cite{LoMe10}) contains new decompositions of the
combined potential operators $A_{k,\eta}$ and $A'_{k,\eta}$ which are
crucial in the analysis of the $hp$ methods in \cite{LoMe10}. Of key
importance there is the fact that the decomposition involves
certain operators which map into spaces of  functions which are analytic in a
neighbourhood of $\Gamma$. However this analysis is rather different
in flavour (and has different goals) from that of the present paper.

We flag that a related and complementary study of the same boundary integral equation formulations that we consider in this paper has been carried out recently in \cite{BeSp:10}. That paper includes, similarly to our \S\ref{sec:num}, a numerical study relating to a range of geometries of 2D scatterers, but \cite{BeSp:10} has a different focus, namely an investigation, via computation of the numerical range of boundary element discretisations, of conditions which ensure that $A_{k,k}$ is coercive, and how its coercivity constant depends on $k$.

Finally we note that, in a similar vein to our \S\ref{sec:num}, Warnick and Chew \cite{WaChew99,WaChew01,WaChew04} study the conditioning
of boundary element discretisations of the single-layer potential operator $S_k$ via an
approximate theoretical analysis and numerical experiments, obtaining
simple explicit approximate upper and lower bounds for the condition
number as a function of $k$ and the discretisation step size for several
canonical 2D geometries (a circle, crack and two parallel cracks) \cite[Table 2]{WaChew04}.

\section{Bounds on norms and condition numbers at the continuous level}
\label{sec:previous}
\setcounter{equation}{0}


\subsection{The case of a circle or sphere} \label{sec:circ}

Prior to~\cite{CWGLL07}, most research was focussed on the case when $\Gamma$ is a
circle or sphere, in which case Fourier analysis methods are possible.

For the case $d=2$, when $\Gamma$ is the unit circle, rigorous upper bounds
on $\|\Ak\|$ and $\|\Ak^{-1}\|$ for the case $\eta=k$ (previously proposed as
optimal for conditioning for the unit circle when $k\geq 1$ in e.g.\
\cite{Ami90,Ami93,KrSp:83}) were derived in~\cite{DoGrSm:07} and are that,
for all sufficiently large~$k$,
\begin{eqnarray}
  &\Vert \cA_{k,k} \Vert \leq C k^{1/3},&  \label{eq:DoGrSmnorm} \\
  &\|A_{k,k}^{-1}\| \leq 1,& \label{eq:DoGrSmnorminv2}
\end{eqnarray}
with $C$ a constant independent of~$k$. (Combining \eqref{eq:DoGrSmnorminv2} with Lemma \ref{lem} below we see that, in fact, $\|A_{k,k}^{-1}\| = 1$ for all sufficiently large $k$.) Although the focus in \cite{DoGrSm:07} was on bounding $\cA_{k,k}$ rather than on bounding the separate components $S_k$ and $D_k$, Lemmas 4.1, 4.9 and 4.10 in \cite{DoGrSm:07} also imply the separate bounds that
\begin{equation} \label{circ:SDnorm}
\|S_k\| \leq C k^{-2/3}, \quad \|D_k\| \leq C k^{1/3},
\end{equation}
with $C$ a constant independent of $k$.

For the case $d = 3$, when $\Gamma$ is a sphere of unit radius, it is further
shown in~\cite{DoGrSm:07} that, for all sufficiently large $k$, \eqref{eq:DoGrSmnorm} holds
(see also \cite{Gi:97}) and that, for every $C^\prime>1$,
\[
  \Vert \cA_{k,k}^{-1}\Vert \leq C^\prime,
\]
for all sufficiently large $k$.  A more refined and flexible upper bound on $A_{k,\eta}$ than
(\ref{eq:DoGrSmnorm}) in the 3D case was recently derived in~\cite{BaSa:07}, where it was shown that,
for all sufficiently large~$k$,
\begin{equation}
  \|D_k\| \leq C, \quad \|S_k\| \leq Ck^{-2/3},
  \label{eq:DSboundsSphere}
\end{equation}
for some constant $C$ independent of $k$, and hence
\begin{equation}
  \|\Ak\|\ =\ \|I + D_k - \ri\eta S_k\|\ \leq\ 1+ C \left(1 + |\eta| k^{-2/3}\right).
  \label{eq:normAsphere}
\end{equation}
The choice $|\eta| = k$ yields the same estimate as
\eqref{eq:DoGrSmnorm}, whereas the choice $|\eta| = k^{2/3}$ yields
a $k-$independent bound for $\Vert \Ake\Vert$.

\subsection{The case of a starlike obstacle}
\label{sec:starlike}

Consider the case when $\Omega$ is connected, piecewise smooth and
starlike, with $\Gamma$ Lipschitz and $C^2$ in a neighbourhood of almost every $x\in\Gamma$, and
$$
  \delta_- := \essinf_{x\in \Gamma} \; x \cdot \nu(x) > 0
$$
(assuming, without loss of generality, that the origin lies in $\Omega$ ($0\in\Omega$)).
Under these assumptions it is shown in~\cite{CWMonk06} that, for $\eta\in\R\setminus\{0\}$,
\begin{equation}
  \label{eqn:normsequal2}
  \|A_{k,\eta}^{-1}\| \leq B,
\end{equation}
where
$$
  \!\!\!\!\!\!\!\!B:=
  \frac{1}{2}+\left[\left(\frac{\delta_+}{\delta_-}+\frac{4{\delta^*}^2}{\delta_-^2}\right)\left[\frac{\delta_+}{\delta_-}\left(\frac{k^2}{\eta^2}+1\right)
  + \frac{d-2}{\delta_-|\eta|}+\frac{{\delta^*}^2}{\delta_-^2}\right]
  + \frac{(1+2kR_0)^2}{2\delta_-^2\eta^2}\right]^{1/2},
$$
with
\[
  \!R_0 := \sup_{x\in\Gamma}|x|, \quad \delta_+ := \esssup_{x\in \Gamma}
  \; x \cdot \nu(x), \;\; \delta^* := \esssup_{x\in \Gamma} |x-(x
  \cdot \nu(x))\nu(x)|.
\]
These assumptions hold, for example, if
$\Omega$ is a starlike polygon or polyhedron (and $0\in\Omega$), and in these cases $\delta_-$ and $\delta_+$ are
the distances from the origin to the nearest and furthest sides of $\Gamma$, respectively. Note that the
expression $B$ blows up if $k/|\eta|\to\infty$ or if
$\delta_+/\delta_-\to\infty$, or if $\delta_-|\eta|\to 0$, uniformly
with respect to the values of other variables.
If $\Gamma$ is a circle or sphere, i.e.\ $\Gamma=\{x:|x|=R_0\}$, then
$\delta_-=\delta_+=R_0$ and $\delta^*=0$ so
\begin{equation}
  \label{eqn:Bcircle}
  B = B_0 := \frac{1}{2}+\left[1+ \frac{k^2}{\eta^2} + \frac{d-2}{R_0|\eta|} + \frac{(1+2kR_0)^2}{2R_0^2\eta^2}\right]^{1/2}.
\end{equation}
In the general case, since $\delta_-\leq \delta_+\leq R_0$ and
$0\leq \delta_*\leq R_0$, it holds that $B\geq B_0$.

Based on low frequency asymptotics and numerical calculations for the case when $\Gamma$ is a circle, it is proposed in \cite{Kr:85} to choose
\begin{equation} \label{eta_opt}
\eta = \max\left(\frac{1}{2R_0},k\right)
\end{equation}
to minimise the condition number of $A_{k,\eta}$ (and see \cite{Ami90,Ami93} for some further evidence supporting this choice). Based on computational experience, Bruno and Kunyansky \cite{BrKu:01,BrPC07} recommend the
similar formula that $\eta = \max(6T^{-1},k/\pi)$,  where $T$ is the diameter of $\Omega$, on the basis that this choice is found to minimise the number of GMRES
iterations in an iterative solver. With either of these choices $\|A_{k,\eta}^{-1}\|$ is bounded uniformly in $k$ for $k>0$ for $\Omega $ starlike. In particular, with the choice \eqref{eta_opt} we see that
\begin{eqnarray} \nonumber
\|A_{k,\eta}^{-1}\| \leq B & \leq &
  \frac{1}{2}+\left[\left(\frac{\delta_+}{\delta_-}+\frac{4{\delta^*}^2}{\delta_-^2}\right)\left[2\frac{\delta_+}{\delta_-}
  + \frac{2(d-2)R_0}{\delta_-}+\frac{{\delta^*}^2}{\delta_-^2}\right]
  + \frac{8R_0^2}{\delta_-^2}\right]^{1/2}\\
  & \leq & \frac{1}{2}+\theta\left[4+13\theta+4\theta^2\right]^{1/2},\label{Betak}
\end{eqnarray}
where $\theta = R_0/\delta_-$.

\subsection{Upper bounds on $\|S_k\|$, $\|D_k\|$ and $\|\Ak\|$ in the general Lipschitz case} \label{sec:upper}

It is shown in~\cite[Theorems~3.3, 3.5, 3.6]{CWGLL07}, under the assumption
that the scatterer $\Omega$ is Lipschitz, that there exist positive constants
$C_i$, $i=1,2,3$, dependent only on $\Omega$, such that
\begin{eqnarray}
  &\|S_k\| \leq C_1 k^{(d-3)/2},& \label{eq:Sk2Dupper} \\
  &\|D_k\| \leq C_2 k^{(d-1)/2} + C_3,& \label{eq:Dk2Dupper} \\
  &\|\Ak\| \leq 1 + C_3 + C_2 k^{(d-1)/2} + C_1 |\eta| k^{(d-3)/2},& \label{Marko_fb}
\end{eqnarray}
for $k>0$.  In 2D ($d=2$), for the case
$\Gamma$ simply-connected and smooth, (\ref{Marko_fb}) was shown
previously, for all sufficiently large $k$, in \cite{DoGrSm:07}.

Expressions that are in principle computable for the constants $C_i$, $i=1,2,3$, are given in~\cite{CWGLL07}.  In particular,
in the simplest case that $\Gamma$ is a straight line of length $a$, the upper bound on $\|S_k\|$ is given explicitly by
\begin{equation}
  \|S_k\| \leq 2\sqrt{\frac{a}{\pi k}}.
  \label{eqn:Skcrack}
\end{equation}

The bounds \eqref{eq:Sk2Dupper}--\eqref{Marko_fb} are sharp in their dependence on $k$ in the limit $k\rightarrow0$ except for~(\ref{eq:Sk2Dupper}) (and so~(\ref{Marko_fb})) in the 2D case. In the 2D case
the low frequency behaviour is more subtle, as studied previously for the case of a circle in~\cite{KrSp:83, Kr:85}.  To obtain sharper bounds for low $k$ for general Lipschitz $\Gamma$ in the 2D case, note that, from the power series representations for $Y_0$ \cite[(9.1.13)]{AbSt:72}, it follows easily that
\[ \left|Y_0(t)-\frac{2}{\pi}\log t \right| \leq \frac{2}{\pi}\left\{\log 2 - \gamma + \frac{1}{4}\right\}, \]
for $0<t\leq 1$, where $\gamma=0.577\ldots$ is Euler's constant.  Since also $|J_0(t)|\leq 1$ for $t\geq 0$ \cite[(9.1.18)]{AbSt:72}, this implies that
\begin{equation}
  \left| H_0^{(1)}(t)-\frac{2\ri}{\pi}\log t\right| \leq \sqrt{1+\frac{4}{\pi^2}\left(\log 2 - \gamma +\frac{1}{4}\right)^2} < 1.03, \quad 0<t \leq 1.
  \label{eqn:H0bound}
\end{equation}
(Since $H_0^{(1)}(t)=-\frac{2\ri}{\pi}\log t+1+o(1)$ as $t\rightarrow 0^+$, this upper bound is an overestimate by not more than 3\% for small $t$.)

Let $S_0$ denote the single-layer potential operator in the Laplace case, defined by~(\ref{eq:S}) with $\Phi(x,y)$ replaced by $\Phi_0(x,y):=(1/2\pi)\log(R_0/|x-y|)$, for some constant $R_0>0$ (later we will choose $R_0$ to be some characteristic length scale of $\Gamma$).  It is a known result (e.g.~\cite{mclean}) that $S_0$ is a bounded operator on $L^2(\Gamma)$.  Further, (\ref{eqn:H0bound}) implies that, where $D:=\sup_{x,y\in \Gamma}|x-y|$ is the diameter of $\Gamma$,
\begin{equation} \label{phidif}
 \left|\Phi(x,y) - \Phi_0(x,y)+ \frac{1}{2\pi}\log(kR_0)\right| < 0.26,
 \end{equation}
for $kD\leq 1$. From this inequality it follows that
\[
  \|S_k\| \leq \|S_0\|+\| S_k - S_0 \| \leq \|S_0\|+\left( \frac{|\log kR_0|}{2\pi} + 0.26 \right)|\Gamma|,
\]
for $kD\leq 1$, where $|\Gamma|=\int_{\Gamma}\rd s$ is the length of $\Gamma$.  Thus, taking $R_0=1$ in the above result, and combining this bound with (\ref{eq:Sk2Dupper}), we obtain a refined version of~(\ref{eq:Sk2Dupper}) for small $k$ when $d=2$, that
\[
  \|S_k\| \leq C_0 (1-\log k), \quad \mbox{for }0<k\leq 1,
\]
where the positive constant $C_0$ again depends only on $\Omega$,
which, combined with~(\ref{eq:Dk2Dupper}), gives that
\begin{equation} \label{akb}
\|A_{k,\eta}\| \leq 1 + C_3 + C_2 k^{1/2} + C_0|\eta|(1-\log k), \quad 0<k\leq 1.
\end{equation}

\subsection{Lower bounds on $\|S_k\|$, $\|D_k\|$ and $\|\Ak\|$} \label{sec:lower}

The following lower bounds on $\|S_k\|$, $\|D_k\|$ and $\|\Ak\|$ are derived in~\cite[\S4]{CWGLL07}.

\begin{lemma}\cite[Lemma~4.1]{CWGLL07}
  \label{lem}
  In both 2D and 3D, if a part of $\Gamma$ is $C^1$, then $\|\Ak\|\geq 1$, $\|\Ak^{-1}\|\geq 1$.
\end{lemma}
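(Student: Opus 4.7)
The plan is to exhibit a normalised sequence $(\varphi_n)\subset L^2(\Gamma)$ on which the non-identity part of $\Ak$ vanishes in norm, i.e.\ $\|\varphi_n\|=1$ and $K\varphi_n\to 0$ in $L^2(\Gamma)$, where $K:=D_k-\ri\eta S_k$. The triangle inequality then yields $\|\Ak\varphi_n\|=\|\varphi_n+K\varphi_n\|\to 1$, so $\|\Ak\|\geq\|\Ak\varphi_n\|\to 1$. Setting $\psi_n:=\Ak\varphi_n$, the identity $\Ak^{-1}\psi_n=\varphi_n$ gives $\|\Ak^{-1}\|\geq\|\varphi_n\|/\|\psi_n\|=1/\|\psi_n\|\to 1$. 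Both lower bounds thus follow from producing this one sequence.

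To construct $(\varphi_n)$, localise to a relatively open piece $U\subset\Gamma$ on which $\Gamma$ is $C^1$, fix $x_0$ in its relative interior, and choose a $C^1$ chart $\gamma:B\to U$ with $B$ an open ball about $0$ in $\R^{d-1}$ and $\gamma(0)=x_0$. Fix a nonzero $\chi\in C^\infty_c(B)$ with $\mathrm{supp}(\chi)\subset\subset B$, a unit vector $\xi\in\R^{d-1}$, and normalising constants $c_n>0$, and set
\[
  \varphi_n(x) := c_n\,\chi(\gamma^{-1}(x))\,\re^{\ri n\,\xi\cdot\gamma^{-1}(x)}, \quad x\in U,
\]
extended by $0$ to $\Gamma\setminus U$. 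The pulled-back surface measure has a continuous positive density on $\mathrm{supp}(\chi)$, so the $c_n$ can be chosen uniformly bounded with $\|\varphi_n\|=1$, and $\varphi_n\rightharpoonup 0$ in $L^2(\Gamma)$ by the Riemann-Lebesgue lemma.

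The crux is to show $S_k\varphi_n,D_k\varphi_n\to 0$ in $L^2(\Gamma)$. Since $\mathrm{supp}(\varphi_n)$ is a fixed compact subset of $U$, every $x\in\Gamma\setminus U$ lies at positive distance from $\mathrm{supp}(\varphi_n)$, so $\Phi(x,\cdot)$ and $\partial\Phi(x,\cdot)/\partial\nu(\cdot)$ are $C^\infty$ on a neighbourhood of $\mathrm{supp}(\varphi_n)$; one integration by parts in the $\xi$ direction in local coordinates then yields $|S_k\varphi_n(x)|+|D_k\varphi_n(x)|=O(n^{-1})$ uniformly in such $x$. For $x\in U$ we use that $S_k,D_k$ restricted to $L^2(U)\to L^2(U)$ are compact: the kernel of $S_k$ is weakly singular ($O(\log|x-y|)$ in 2D, $O(|x-y|^{-1})$ in 3D), and the $C^1$ regularity of $\Gamma$ on $U$ gives $(x-y)\cdot\nu(y)=o(|x-y|)$ uniformly on $\mathrm{supp}(\chi\circ\gamma^{-1})$, which is the decisive estimate for $D_k$. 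Compactness then promotes the weak convergence $\varphi_n\rightharpoonup 0$ to norm convergence $S_k\varphi_n,D_k\varphi_n\to 0$ in $L^2(U)$, and together with the decay on $\Gamma\setminus U$ this gives the claim.

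The main technical obstacle is the compactness of $D_k$ on functions supported in a merely $C^1$ (as opposed to $C^{1,\alpha}$) portion of $\Gamma$. Under $C^{1,\alpha}$ regularity, $|(x-y)\cdot\nu(y)|=O(|x-y|^{1+\alpha})$ places the kernel in the classical weakly singular class, for which compactness is routine; under only $C^1$ one has only $o(|x-y|)$, and compactness becomes the content of a classical but non-elementary theorem (Fabes-Jodeit-Rivi\`ere, equivalently Coifman-McIntosh-Meyer on the Calder\'on commutator), noting that the Helmholtz kernel differs from the Laplace kernel by a smooth remainder. A self-contained alternative tailored to the specific $\varphi_n$ above is to split the integral at scale $|x-y|\sim n^{-1/2}$, using the $o(|x-y|)$ estimate on the inner region and one integration by parts in $\xi$ with the uniform continuity of $\nu$ on $\mathrm{supp}(\chi\circ\gamma^{-1})$ on the outer region, thereby avoiding any appeal to singular-integral theory.
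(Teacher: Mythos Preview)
The paper itself does not prove this lemma; it merely cites \cite[Lemma~4.1]{CWGLL07}. Your overall strategy---construct a normalised, weakly null oscillatory sequence supported in the $C^1$ patch and use compactness of $S_k$ and $D_k$ on such functions to conclude $\|(D_k-\ri\eta S_k)\varphi_n\|\to 0$---is the natural one and is correct. The main line of argument, invoking the Fabes--Jodeit--Rivi\`ere compactness of the double-layer on a $C^1$ piece (which the present paper also cites as \cite{Fabes78} for exactly this purpose), is sound.

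Two technical points deserve attention. First, your integration-by-parts argument for the far-field contribution of $D_k$ (and even $S_k$) differentiates the integrand $\partial_\nu\Phi(x,\gamma(t))\,\chi(t)\,J(t)$ in $t$, but on a merely $C^1$ chart both the Jacobian $J$ and the normal $\nu\circ\gamma$ are only continuous, not differentiable. This is harmless: the far-field operator has a bounded smooth kernel and is therefore Hilbert--Schmidt, so compactness alone (without any rate) already gives $\|D_k\varphi_n\|_{L^2(\Gamma\setminus U)}\to 0$ from $\varphi_n\rightharpoonup 0$. You do not need the $O(n^{-1})$ rate.

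Second, and more substantively, your ``self-contained alternative'' for the near-field part of $D_k$ does not close under the stated hypothesis. Splitting at scale $n^{-1/2}$, the inner contribution is controlled via Schur's test by $\int_0^{n^{-1/2}}\omega(r)\,r^{-1}\,dr$, where $\omega$ is the modulus of continuity of the unit normal. For this to tend to zero one needs the Dini condition $\int_0^1\omega(r)\,r^{-1}\,dr<\infty$, which mere $C^1$ does not guarantee (take $\omega(r)\sim 1/\log(1/r)$). So the alternative works for $C^{1,\alpha}$ or Dini-continuous normals, but not for bare $C^1$; for the latter one really does need the singular-integral compactness result you cited in the main argument. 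Since your principal route already handles this correctly, the lemma is proved---just drop the claim that the alternative avoids singular-integral theory in full generality.
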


\begin{theorem}\cite[Theorem~4.2]{CWGLL07}
  \label{th:3}
  In the 2D case, if $\Gamma$ contains a straight line section of
  length $a$, then
  $$
    \|S_k\| \geq \sqrt{\frac{a}{\pi k}} + O(k^{-1})
  $$
  as $k\to\infty$ and
  $$
    \|\Ak\| \geq |\eta|\,\sqrt{\frac{a}{\pi k}} - 1 + O(|\eta|k^{-1})
  $$
  as $k\to\infty$, uniformly in $\eta$.
\end{theorem}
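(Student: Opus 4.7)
The plan is to exhibit an explicit test function that mimics a plane wave grazing along the straight line section, and to evaluate $S_k$ and $\Ak$ on it. Choose coordinates so that the straight line section of $\Gamma$ is $\Gamma_0:=\{(t,0):t\in[0,a]\}$ with unit normal $\nu=(0,1)$ pointing into $\Omega_e$, and define $\varphi\in L^2(\Gamma)$ by $\varphi((t,0))=\re^{\ri kt}$ for $t\in[0,a]$ and $\varphi=0$ on $\Gamma\setminus\Gamma_0$, so that $\|\varphi\|=\sqrt{a}$. A useful preliminary observation is that $D_k\varphi$ vanishes identically on $\Gamma_0$: for $x,y\in\Gamma_0$ the vector $y-x$ is tangent to $\Gamma_0$ and hence orthogonal to $\nu(y)$, so $\partial\Phi(x,y)/\partial\nu(y)$, which is proportional to $(y-x)\cdot\nu(y)/|x-y|$, vanishes. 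Consequently $(\Ak\varphi)|_{\Gamma_0}=\varphi|_{\Gamma_0}-\ri\eta\,(S_k\varphi)|_{\Gamma_0}$, and it suffices to estimate $\|S_k\varphi\|_{L^2(\Gamma_0)}$ from below.

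The heart of the argument is an asymptotic evaluation of
\[
S_k\varphi((s_1,0)) \ = \ \frac{\ri}{2}\int_0^a H_0^{(1)}(k|s_1-t|)\,\re^{\ri kt}\,\rd t,\qquad s_1\in[0,a],
\]
split at $t=s_1$ and handled via the standard asymptotic $H_0^{(1)}(z)=\sqrt{2/(\pi z)}\,\re^{\ri(z-\pi/4)}+O(z^{-3/2})$. In the range $t<s_1$ the factor $\re^{\ri k(s_1-t)}$ from $H_0^{(1)}$ and the factor $\re^{\ri kt}$ from $\varphi$ combine constructively to the stationary phase $\re^{\ri ks_1}$; substituting $u=k(s_1-t)$ reduces this part to $(\ri\re^{\ri ks_1}/(2k))\int_0^{ks_1}H_0^{(1)}(u)\re^{-\ri u}\,\rd u$, whose leading behaviour $2\re^{-\ri\pi/4}\sqrt{2ks_1/\pi}+O(1)$ produces the main term $\re^{\ri\pi/4}\sqrt{2s_1/(\pi k)}\,\re^{\ri ks_1}+O(k^{-1})$, uniformly in $s_1\in[0,a]$ (the case $ks_1=O(1)$ being absorbed into the error, since then $\sqrt{s_1/k}=O(1/k)$). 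In the range $t>s_1$ the combined phase is $\re^{\ri k(2t-s_1)}$, which oscillates rapidly; one integration by parts against the $u^{-1/2}$ envelope of $H_0^{(1)}(u)\re^{\ri u}$ makes this part $O(k^{-1})$ uniformly in $s_1$. Hence $|S_k\varphi((s_1,0))|^2=2s_1/(\pi k)+O(k^{-3/2})$ uniformly in $s_1$; integrating over $[0,a]$ and taking a square root yields $\|S_k\varphi\|_{L^2(\Gamma_0)}=a/\sqrt{\pi k}+O(k^{-1})$.

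Dividing by $\|\varphi\|=\sqrt{a}$ gives the lower bound on $\|S_k\|$. For $\|\Ak\|$, the preliminary reduction and the reverse triangle inequality in $L^2(\Gamma_0)$ give
\[
\|\Ak\varphi\| \ \geq \ \|\Ak\varphi\|_{L^2(\Gamma_0)} \ \geq \ |\eta|\,\|S_k\varphi\|_{L^2(\Gamma_0)} - \|\varphi\|_{L^2(\Gamma_0)},
\]
and dividing by $\|\varphi\|=\sqrt{a}$ produces the claimed bound $|\eta|\sqrt{a/(\pi k)}-1+O(|\eta|k^{-1})$; uniformity in $\eta$ is automatic since $\varphi$ does not depend on $\eta$.

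The main obstacle is technical rather than conceptual: obtaining the uniform expansion $\int_0^M H_0^{(1)}(u)\,\re^{-\ri u}\,\rd u = 2\re^{-\ri\pi/4}\sqrt{2M/\pi}+O(1)$ for all $M\geq 0$. This requires handling the logarithmic singularity of $H_0^{(1)}$ at the origin (which contributes only $O(1)$ from $u\in[0,1]$), applying the asymptotic of $H_0^{(1)}$ on $[1,M]$ with the $O(u^{-3/2})$ remainder dominated by an integrable majorant, and extracting the $\sqrt{M}$ growth from the leading $u^{-1/2}$ piece. Once this uniform estimate is in place, everything else in the proof is routine algebra.
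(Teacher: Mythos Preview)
The proposal is correct. The paper under review does not itself prove this theorem---it is merely quoted from the companion paper \cite{CWGLL07}---so there is no in-paper proof to compare against; your argument (test against the grazing plane wave $\varphi=\re^{\ri kt}$ supported on the straight segment, note $D_k\varphi|_{\Gamma_0}=0$, and extract the main contribution to $S_k\varphi$ from the stationary-phase piece $t<s_1$ via the large-argument asymptotic of $H_0^{(1)}$) is the natural one and is almost certainly what is done in \cite{CWGLL07}. One minor wording point: for the oscillatory $t>s_1$ part the relevant fact is that $\int_0^M H_0^{(1)}(u)\re^{\ri u}\,\rd u$ remains bounded as $M\to\infty$ (the leading $u^{-1/2}\re^{2\ri u}$ contribution is a convergent Fresnel integral), which is slightly different from ``one integration by parts'' but gives the same $O(k^{-1})$ conclusion.
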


\begin{theorem}\cite[Theorem~4.4]{CWGLL07}
  \label{th:5}
  In the 2D case, if $\Gamma$ is locally $C^2$ in
  a neighbourhood of some point $x^*$ on the boundary then, for some
  constants $C>0$ and $k_0>0$, it holds for all $k\geq k_0$ and all
  $\eta\in\R$ that
  $$
    \|S_k\|\geq C k^{-2/3} \;\;\mbox{ and }\;\; \|\Ak\|\geq C|\eta|k^{-2/3}.
  $$

  More generally, adopt a local coordinate system $OX_1X_2$ with
  origin at $x^*$ and the $X_1$ axis in the tangential direction at
  $x^*$, so that, near $x^*$, $\Gamma$ coincides with the curve $\{x^*
  + t^* X_1 + n^*f(X_1):X_1\in \R\}$, for some $f\in C^2(\R)$ with
  $f(0)=f^\prime(0)=0$; here $t^*$ and $n^*$ are the unit tangent and
  normal vectors at $x^*$. Then if, for some $N\in\N$, $\Gamma$ is
  locally $C^{N+1}$ near $x^*$, i.e.\ $f\in C^{N+1}(\R)$, and if also
  $f^{\prime}(0)=f^{(2)}(0)=\dots = f^{(N)}(0)=0$, then there exist
  $C>0$ and $k_0>0$ such that
  $$
    \|S_k\|\geq C k^{-(N+1)/(2N+1)}\;\;\mbox{ and }\;\; \|\Ak\|\geq
    C|\eta| k^{-(N+1)/(2N+1)}
  $$
  for all $k\geq k_0$ and all $\eta\in\R$.
\end{theorem}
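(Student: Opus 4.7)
The plan is to exhibit, for each sufficiently large $k$, an explicit oscillatory test function $\varphi_k\in L^2(\Gamma)$ that realises, up to constants, both claimed lower bounds. In the local coordinate system $OX_1X_2$ at $x^*$ from the statement, parametrise $\Gamma$ near $x^*$ as the graph $\{(X_1,f(X_1))\}$ with $f\in C^{N+1}$ and $f(0)=f'(0)=\dots=f^{(N)}(0)=0$. Fix a nontrivial cutoff $\chi\in C_c^\infty(\R)$ supported in $[-1,1]$, and for a scale $\delta>0$ to be chosen later, set
\[
 \varphi_k(y) := \chi(Y_1/\delta)\,\re^{\ri k Y_1},\qquad y=(Y_1,f(Y_1))\in\Gamma,
\]
extended by zero off the $C^{N+1}$ patch. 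Since the arc-length element is $\rd s = (1+O(\delta^{2N}))\,\rd Y_1$ on the support, one has $\|\varphi_k\|^2 \asymp \delta$ uniformly in small $\delta$.

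The main estimate is a lower bound on $|\langle S_k\varphi_k,\varphi_k\rangle|$. Using the far-field asymptotic $H_0^{(1)}(z)=\sqrt{2/(\pi z)}\re^{\ri(z-\pi/4)}(1+O(z^{-1}))$ in the kernel $\Phi(x,y)=(\ri/4)H_0^{(1)}(k|x-y|)$ gives, up to lower-order terms,
\[
 \langle S_k\varphi_k,\varphi_k\rangle \;\approx\; \frac{1+\ri}{2\sqrt{\pi k}}\int\!\!\int \frac{\chi(s/\delta)\chi(t/\delta)}{\sqrt{|x(s)-y(t)|}}\,\re^{\ri k(|x(s)-y(t)|-(s-t))}\,\rd s\,\rd t.
\]
The expansion $|x(s)-y(t)|=|s-t|+(f(s)-f(t))^2/(2|s-t|)+\dots$ and the vanishing of $f',\dots,f^{(N)}$ give $|x(s)-y(t)|-|s-t|=O(\delta^{2N+1})$ on the support, so I would choose
\[
 \delta = \delta_k := c\,k^{-1/(2N+1)}
\]
with $c>0$ small so that $k\cdot O(\delta^{2N+1})$ is bounded, keeping the phase correction negligible. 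In the triangle $\{t<s\}\cap[-\delta,\delta]^2$ the leading phase $k(|s-t|-(s-t))$ vanishes identically, so the integrand there is nearly non-oscillatory and, after rescaling $(s,t)=\delta(u,v)$, yields a constant multiple of $\delta^{3/2}/\sqrt{k}$ whose complex argument is close to $\pi/4$. The oscillatory complementary region $\{t>s\}$ contributes only $O(\delta/\sqrt k)=O(\delta^{1/2})\cdot\delta^{3/2}/\sqrt k$ by a nonstationary-phase integration by parts, and the near-zone $k|s-t|\lesssim 1$ (where the Hankel function must be replaced by its logarithmic expansion) is even smaller, $O(\delta/k)$; both are dominated by the main term for large $k$. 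Dividing by $\|\varphi_k\|^2 \asymp \delta$ gives
\[
 \|S_k\|\;\geq\;\frac{|\langle S_k\varphi_k,\varphi_k\rangle|}{\|\varphi_k\|^2}\;\gtrsim\;\frac{\delta^{1/2}}{\sqrt k}\;=\;c^{1/2}\,k^{-(N+1)/(2N+1)},
\]
which is the first claim.

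For $\|\Ake\|$ I would apply the same $\varphi_k$ and decompose
\[
 \langle \Ake\varphi_k,\varphi_k\rangle \;=\; \|\varphi_k\|^2 \;+\; \langle D_k\varphi_k,\varphi_k\rangle \;-\; \ri\eta\,\langle S_k\varphi_k,\varphi_k\rangle.
\]
The only additional ingredient needed is a bound $|\langle D_k\varphi_k,\varphi_k\rangle|\leq C\|\varphi_k\|^2$ with $C$ independent of $k$. The kernel of $D_k$ carries the factor $(x-y)\cdot\nu(y)$; by Taylor expansion and the vanishing of $f',\dots,f^{(N)}$ at $0$ this satisfies $|(x-y)\cdot\nu(y)|\lesssim \delta^{N-1}(s-t)^2$ on the support of $\varphi_k$. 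Splitting the double integral at $k|s-t|=1$ and using the near- and far-field asymptotics of $H_1^{(1)}$ gives $|\langle D_k\varphi_k,\varphi_k\rangle|\lesssim \delta^N/k + \delta^{N+3/2}\sqrt k \lesssim \delta \asymp \|\varphi_k\|^2$ at $\delta=\delta_k$. Hence for constants $C',C''>0$ independent of $k$ and $\eta$,
\[
 \|\Ake\|\;\geq\;\frac{|\langle \Ake\varphi_k,\varphi_k\rangle|}{\|\varphi_k\|^2}\;\geq\;C'|\eta|k^{-(N+1)/(2N+1)}-C''.
\]
When $|\eta|k^{-(N+1)/(2N+1)}$ is large enough this directly yields the claimed bound; when it is small, Lemma~\ref{lem} gives $\|\Ake\|\geq 1$, which exceeds $C|\eta|k^{-(N+1)/(2N+1)}$ for a sufficiently small $C>0$.

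The hard step will be the oscillatory analysis of $\langle S_k\varphi_k,\varphi_k\rangle$: I will need to control the near-zone $k|s-t|\lesssim 1$ where the Hankel asymptotic breaks down, show that the phase remainder $k(|x(s)-y(t)|-|s-t|)=O(k\delta^{2N+1})$ is genuinely uniformly bounded on the whole support of $\varphi_k$ (which is precisely what fixes the scale $\delta_k=ck^{-1/(2N+1)}$ and determines the constant $c$), and verify that the oscillatory region $\{t>s\}$ does not cancel the non-oscillatory contribution from $\{t<s\}$ -- for this the definite complex argument $\pi/4$ of the prefactor $(1+\ri)/\sqrt k$ is crucial, as it ensures that no accidental cancellation occurs once either the real or the imaginary part is taken.
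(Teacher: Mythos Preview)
This theorem is not proved in the present paper; it is quoted verbatim from \cite{CWGLL07} (as the citation in the theorem heading indicates), and the paper only records the statement together with the quantitative refinements leading to Corollary~\ref{cor45}. So there is no proof here to compare against. Your approach---localise an oscillatory test function $\varphi_k(y)=\chi(Y_1/\delta)\re^{\ri k Y_1}$ on a patch of length $\delta\asymp k^{-1/(2N+1)}$ chosen so that the phase remainder $k\bigl(|x(s)-y(t)|-|s-t|\bigr)=O(k\delta^{2N+1})$ stays bounded, then read off the non-oscillatory contribution from the half $\{t<s\}$---is the natural one, and the quantitative constants $C_N(0)$ that the paper records after Theorem~\ref{th:5} are exactly what a sharp version of this computation produces, so your plan is in line with what \cite{CWGLL07} does.

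There is, however, one slip you should fix. You write that the oscillatory region $\{t>s\}$ contributes $O(\delta/\sqrt k)=O(\delta^{1/2})\cdot\delta^{3/2}/\sqrt k$. Neither the arithmetic nor the conclusion is right: $\delta^{1/2}\cdot\delta^{3/2}/\sqrt k=\delta^2/\sqrt k$, and in any case a contribution of size $\delta/\sqrt k$ compared with the main term $\delta^{3/2}/\sqrt k$ gives a ratio $\delta^{-1/2}\to\infty$, so as stated the error would swamp the main term. The correct bound, carrying the $1/\sqrt k$ prefactor from the Hankel asymptotic throughout, is $O(\delta/k)$: after substituting $r=t-s$ and one integration by parts in $r$, the dominant contribution is the boundary term at $r=1/k$, of size
\[
\frac{1}{\sqrt k}\cdot\frac{|g(1/k)|}{\sqrt{1/k}}\cdot\frac{1}{2k}\;=\;O\!\left(\frac{\delta}{k}\right),
\qquad g(r):=\int\chi(\sigma/\delta)\chi((\sigma+r)/\delta)\,d\sigma=O(\delta).
\]
Then the ratio to the main term is $(k\delta)^{-1/2}=k^{-N/(2N+1)}\to 0$, and the rest of your argument (including the treatment of $D_k$ and the splitting into the cases $|\eta|k^{-(N+1)/(2N+1)}$ large/small via Lemma~\ref{lem}) goes through.
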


In fact, under the conditions of Theorem~\ref{th:5}, assuming further that $f^{(N+1)}(0)\neq 0$, we have quantitative lower bounds on $\|S_k\|$ and $\|A_{k,\eta}\|$:
\[
  \|S_k\|   \geq  C_N(0)\, k^{-(N+1)/(2N+1)}\,(1+o(1)), \quad \mbox{as }k\to\infty,
\]
and
\[
  \|\Ak\| \geq \left\{ \begin{array}{ll}|\eta| \, C_N(0)\, k^{-(N+1)/(2N+1)}\,(1+o(1)), & \mbox{if }|\eta| k^{-(N+1)/(2N+1)}\to\infty, \\
                                        |\eta| \, C_N(0)\, k^{-(N+1)/(2N+1)} - \frac{N}{2\sqrt{2}} +o(1), & \mbox{if }|\eta| \approx k^{(N+1)/(2N+1)},
                                        \end{array}\right.
\]
as $k\to\infty$, where
$$
  C_N(0) = \sqrt{\frac{1}{8\pi}}\left(\sqrt{\frac{\pi}{2}}\frac{N!}{|f^{(N+1)}(0)|}\right)^{1/(2N+1)}.
$$

Noting that $f^{\prime\prime}(0)$ is the
curvature at $x^*$, we have the following corollary by
applying these equations with $N=1$.

\begin{corollary}\cite[Corollary~4.5]{CWGLL07} \label{cor45}
Suppose (in the 2D case) that $\Gamma$ is locally $C^2$ in a
neighbourhood of some point $x^*$ on the boundary and let $R$ be the
radius of curvature at $x^*$. If $R<\infty$,
then,
\begin{equation} \label{eq:Sklower}
\|S_k\| \geq \frac{1}{2}\left(\frac{R}{\pi}\right)^{1/3} \,
(2k)^{-2/3}(1+o(1)), \quad \mbox{as }k\to\infty,
\end{equation}
and
\[
  \|\Ak\| \geq \left\{ \begin{array}{ll} \frac{|\eta|}{2}\left(\frac{R}{\pi}\right)^{1/3} \, (2k)^{-2/3}(1+o(1)), & \mbox{if }|\eta|k^{-2/3}\to\infty, \\
                                        \frac{|\eta|}{2}\left(\frac{R}{\pi}\right)^{1/3} \, (2k)^{-2/3} - \frac{1}{2\sqrt{2}} + o(1), & \mbox{if }|\eta|\approx k^{2/3},
                                        \end{array}\right.
\]
as $k\to\infty$.
\end{corollary}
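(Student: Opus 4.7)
The plan is to obtain Corollary \ref{cor45} as a direct specialisation of the quantitative lower bounds for $\|S_k\|$ and $\|\Ak\|$ given immediately before the corollary statement, taking $N=1$. The non-vanishing hypothesis $f^{(N+1)}(0) \neq 0$ corresponds in this case to the requirement that the curvature at $x^*$ be non-zero, which is equivalent to $R < \infty$, exactly the hypothesis imposed in the corollary.

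First I would identify $|f''(0)|$ with the inverse radius of curvature at $x^*$. Under the chosen local coordinate system with origin at $x^*$, tangential axis $t^*$, and $\Gamma$ locally given by the graph $X_2 = f(X_1)$ with $f(0)=f'(0)=0$, the standard formula for the curvature of a graph reduces at the origin to $\kappa = f''(0)/(1+f'(0)^2)^{3/2} = f''(0)$. Hence $|f''(0)|=1/R$, and by hypothesis this is non-zero and finite.

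Next I would substitute $N=1$ and $|f^{(N+1)}(0)| = 1/R$ into the formula
\[
 C_N(0) = \sqrt{\frac{1}{8\pi}}\left(\sqrt{\frac{\pi}{2}}\frac{N!}{|f^{(N+1)}(0)|}\right)^{1/(2N+1)},
\]
obtaining $C_1(0) = (8\pi)^{-1/2}(R\sqrt{\pi/2})^{1/3}$. A short computation rewrites this as
\[
 C_1(0) = \frac{R^{1/3}}{2^{5/3}\,\pi^{1/3}} = \frac{1}{2}\left(\frac{R}{\pi}\right)^{1/3} 2^{-2/3},
\]
so $C_1(0)\,k^{-(N+1)/(2N+1)} = C_1(0)\,k^{-2/3} = \frac{1}{2}(R/\pi)^{1/3}(2k)^{-2/3}$. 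Inserting this into the two quantitative bounds stated just before the corollary (with $N=1$) yields the two claimed asymptotic inequalities for $\|S_k\|$ and $\|\Ak\|$ verbatim.

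There is essentially no obstacle; the only points requiring a line of justification are the identification of $f''(0)$ with the signed curvature at $x^*$ under the chosen parametrisation, and the arithmetic simplification of $C_1(0)$. Everything else is a substitution into Theorem \ref{th:5} and its quantitative refinement.
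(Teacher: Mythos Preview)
Your proposal is correct and follows exactly the same approach as the paper, which simply states that the corollary follows by noting that $f''(0)$ is the curvature at $x^*$ and applying the quantitative bounds with $N=1$. You have merely spelled out the arithmetic in more detail than the paper does.
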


We also have the following lower bounds on $\|D_k\|$.  The conditions of
Theorem~\ref{th:6} are satisfied, for example, if $\Gamma$ is a polygon. (Choose $x^1$ to
be a corner of the polygon and $x^2$ to be some point on an adjacent
side, with $\Gamma^1$ a neighbourhood of $x^1$ on the adjacent side to $x^2$ and $\Gamma^2$.)

\begin{theorem}\cite[Theorem~4.6]{CWGLL07} \label{th:6}
In the 2D case, suppose $x^1$ and $x^2$ are distinct points on
$\Gamma$, that $\Gamma$ is $C^1$ in one-sided neighbourhoods
$\Gamma^1$ and $\Gamma^2$ of $x^1$ and $x^2$, and that
$(x^1-x^2)\cdot\nu(x)=0$ for $x\in\Gamma^2$ while $(x^1-x^2)$ is not
parallel to $\Gamma^1$ at $x^1$. Then, for some constants $C>0$ and
$k_0>0$, it holds for all $k\ge k_0$ that $\|D_k\|\ge Ck^{1/4}$.
\end{theorem}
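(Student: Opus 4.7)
The plan is to construct a WKB-type wave packet $\varphi$ concentrated on $\Gamma^1$ near $x^1$, with phase chosen to defeat the stationary-phase cancellation in $D_k\varphi$ evaluated on $\Gamma^2$, and to read off a Fresnel-type lower bound from the resulting amplitude. First, I observe that the hypothesis $(x^1-x^2)\cdot\nu(x)=0$ on the connected $C^1$ arc $\Gamma^2$ forces $\nu$ to be constant on $\Gamma^2$, so $\Gamma^2$ lies on the straight line through $x^1$ and $x^2$. Choose coordinates with $x^1$ at the origin and $x^2=(d,0)$, $d>0$, and parametrise $\Gamma^2$ by $x=(d+t,0)$ for $t$ in an interval of positive length, and $\Gamma^1$ by arc length $s\geq 0$ so that $y(s) = s(\cos\alpha,\sin\alpha) + O(s^2)$ and $\nu(y(s)) = (-\sin\alpha,\cos\alpha)+O(s)$, where $\sin\alpha\neq 0$ by the second hypothesis.

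For a smooth bump $\chi\geq 0$ with $\chi\not\equiv 0$ and support in $[0,1]$, and with $\eps := k^{-1/2}$, define
\[
 \varphi(y(s)) := \chi(s/\eps)\,\re^{\ri k s\cos\alpha},
\]
extended by zero on $\Gamma\setminus\Gamma^1$. A change of variables gives $\norm{\varphi}^2 = \eps\,\norm{\chi}_{L^2(\R)}^2$, so $\norm{\varphi}\asymp k^{-1/4}$. Inserting $\varphi$ into the representation $D_k\varphi(x) = (\ri k/2)\int H_1^{(1)}(k|x-y|)(x-y)\cdot\nu(y)|x-y|^{-1}\varphi(y)\,\rd s(y)$, using the large-argument expansion $H_1^{(1)}(z) = \sqrt{2/(\pi z)}\,\re^{\ri(z-3\pi/4)}(1+O(z^{-1}))$, and the Taylor expansions
\[
 |x-y(s)| = (d+t) - s\cos\alpha + \tfrac12 s^2 B(t) + O(s^3), \qquad (x-y(s))\cdot\nu(y(s)) = -(d+t)\sin\alpha + O(s^2),
\]
with $B(t)$ a bounded smooth function determined by $\alpha$, $d+t$, and the curvature of $\Gamma^1$ at $x^1$, the linear-in-$s$ phase from $\re^{\ri k|x-y|}$ cancels exactly against the built-in phase of $\varphi$. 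After the rescaling $s=\eps u$, which converts $ks^2$ to $u^2$ and reduces the cubic remainder to $kO(s^3) = O(k^{-1/2})$ uniformly, the integral collapses to
\[
 D_k\varphi(x) = -\ri\sin\alpha\sqrt{\tfrac{k}{2\pi(d+t)}}\,\re^{\ri(k(d+t)-3\pi/4)}\,\eps\,I(t)\,(1+o(1)), \qquad I(t) := \int_0^1 \chi(u)\,\re^{\ri u^2 B(t)/2}\,\rd u.
\]

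The Fresnel constant $I(t)$ is $k$-independent and continuous in $t$, and can be made bounded below by a positive constant on an interval of $t$ of positive length by choosing $\chi$ with sufficiently narrow support, so that $I(t)$ is a small perturbation of $\int\chi>0$. The $k$-dependent prefactor has modulus $\sqrt k\,\eps = 1$, so $|D_k\varphi(x)|\geq c>0$ uniformly for $x$ in this subinterval of $\Gamma^2$ and for large $k$. Hence $\norm{D_k\varphi}\geq \norm{D_k\varphi}_{L^2(\Gamma^2)}\geq c'>0$, and
\[
 \norm{D_k} \geq \frac{\norm{D_k\varphi}}{\norm{\varphi}} \geq C k^{1/4}
\]
for all $k\geq k_0$. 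The main technical obstacle is the uniform bookkeeping of the remainder terms in the Taylor and Hankel expansions over the shrinking support of $\chi$ and a bounded range of $t$, together with verifying non-vanishing of $I(t)$ over a set of positive measure in $t$ — the latter forcing the freedom to shrink $\mathrm{supp}\,\chi$. The crucial geometric input is that constancy of $\nu$ on $\Gamma^2$ (a consequence of the hypothesis) places $x^1$ exactly on the straight-line continuation of $\Gamma^2$, so the wave packet launched tangentially from near $x^1$ arrives at $\Gamma^2$ without paying a geometric decay factor beyond the universal $\sqrt k$ of the Hankel asymptotics.
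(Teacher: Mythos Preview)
The paper does not prove this theorem; it is quoted verbatim from~\cite{CWGLL07} with no argument given here, so there is no in-paper proof to compare against. That said, your construction is precisely the one used in~\cite{CWGLL07}: a wave packet on $\Gamma^1$ of width $\eps=k^{-1/2}$, with linear phase $\re^{\ri ks\cos\alpha}$ chosen so that the phase of the kernel is stationary at $s=0$ for observation points on $\Gamma^2$, evaluated via the large-argument Hankel asymptotics and a quadratic Taylor expansion of the distance. The scales, the cancellation of the linear phase, the Fresnel rescaling $s=\eps u$, and the final exponent all check out, and your opening observation---that the hypothesis forces $\nu$ constant on $\Gamma^2$, hence $\Gamma^2$ is a straight segment lying on the line through $x^1$ and $x^2$---is exactly the right geometric reduction.

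Two minor corrections that do not affect the outcome. First, $(x-y(s))\cdot\nu(y(s)) = -(d+t)\sin\alpha + O(s)$ rather than $O(s^2)$, because $\nu(y(s))$ itself carries an $O(s)$ correction; since $s=O(k^{-1/2})$ and the leading term is bounded away from zero this is harmless. Second, your device of shrinking $\mathrm{supp}\,\chi$ to force $I(t)\neq 0$ works but degrades the constant (by Cauchy--Schwarz the ratio $\int\chi/\|\chi\|_{L^2}$ shrinks like $\sqrt\delta$). A cleaner route: for fixed $\chi$, the map $B\mapsto\int_0^1\chi(u)\re^{\ri u^2B/2}\,\rd u$ is real-analytic and equals $\int\chi>0$ at $B=0$, hence has only isolated zeros; since $t\mapsto B(t)$ is continuous, $|I(t)|$ is bounded below on some subinterval of positive length without touching $\chi$.
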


The conditions of the next theorem are satisfied
with $N=0$ by some pair of points $x^1$ and $x^2$ whenever $\Gamma$
is $C^1$.

\begin{theorem}\cite[Theorem~4.7]{CWGLL07} \label{th:7}
In the 2D case, suppose $x^1$ and $x^2$ are distinct points on
$\Gamma$, and that, for some $N\in\N_0:= \N\cup\{0\}$, $\Gamma$ is $C^1$ and
$C^{N+1}$ in one-sided neighbourhoods $\Gamma^1$ and $\Gamma^2$ of
$x^1$ and $x^2$, respectively, and that $x^1-x^2$ is not parallel to
$\Gamma^1$ at $x^1$. Without loss of generality, choose $\Gamma^2$
so that, for some $\tilde \epsilon>0$ and $f\in C^{N+1}(\R)$ with
$f(0)=0$,
\[
\Gamma^2\ =\ \{x^2+t\hat u+f(t)\hat n\ :\ 0\le t\le\tilde\epsilon\}
\]
where $\hat u=(x^2-x^1)/|x^1-x^2|$ and $\hat n$ are orthogonal unit
vectors, and suppose that, for some $N\in\N_0$,
\[
f^{(0)}(0)\ =\ f^{(1)}(0)\ =\ \cdots\ =\ f^{(N)}(0)\ =\ 0.
\]
Then there exist $C>0$ and $k_0>0$ such that
\[
\|D_k\|\ \ge\ Ck^{N/(4N+4)}
\]
for all $k>k_0$.
\end{theorem}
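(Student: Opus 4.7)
The plan is to bound $\|D_k\|$ from below via duality, $\|D_k\|\ge |\langle D_k\varphi,\psi\rangle|/(\|\varphi\|_{L^2}\|\psi\|_{L^2})$, with $\varphi,\psi\in L^2(\Gamma)$ chosen as oscillatory test functions supported near $x^1$ in $\Gamma^1$ and near $x^2$ in $\Gamma^2$, calibrated so as to cancel to leading order the rapidly-oscillating phase $e^{ik|x-y|}$ of the double-layer kernel. Parametrise $\Gamma^1$ by arclength $\tau\mapsto\gamma_1(\tau)$ with $\gamma_1(0)=x^1$, and decompose the unit tangent $\gamma_1^\prime(0)=c\hat u+e\hat n$ in the orthonormal frame; the hypothesis that $x^1-x^2$ is not parallel to $\Gamma^1$ at $x^1$ forces $e\ne 0$. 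Using the given parametrisation $\gamma_2(s)=x^2+s\hat u+f(s)\hat n$ of $\Gamma^2$, and setting $R:=|x^1-x^2|>0$, take
\[
\varphi(\gamma_1(\tau))=e^{ikc\tau}\chi(\tau/\rho),\qquad \psi(\gamma_2(s))=e^{ik|\gamma_2(s)-x^1|}\chi(s/\delta),
\]
with $\chi\ge 0$ a fixed smooth cutoff of unit support and scales $\rho,\delta>0$ to be selected.

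With this choice, the large-argument Hankel asymptotic $H_1^{(1)}(z)=\sqrt{2/(\pi z)}\,e^{i(z-3\pi/4)}(1+O(z^{-1}))$ (valid as $kR\to\infty$) gives the integrand of $\langle D_k\varphi,\psi\rangle$ a residual phase $e^{ik\Phi(s,\tau)}$ with
\[
\Phi(s,\tau):=|\gamma_2(s)-\gamma_1(\tau)|-|\gamma_2(s)-x^1|+c\tau.
\]
Direct Taylor expansion gives $\Phi(s,0)=\partial_\tau\Phi(0,0)=0$, $\partial_\tau^2\Phi(0,0)=e^2/R$, and crucially $\partial_\tau\Phi(s,0)=-ef(s)/R+\mbox{(higher order)}$; by $f\in C^{N+1}$ with $f^{(j)}(0)=0$ for $j\le N$, Taylor's theorem yields $f(s)=O(s^{N+1})$ uniformly on $[0,\tilde\epsilon]$. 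Choose $\rho=c_1 k^{-1/2}$ and $\delta=c_2 k^{-1/(2(N+1))}$ with small positive constants $c_1,c_2$. Each contribution to $k\Phi$ is then uniformly controlled on the supports: $ke^2\tau^2/(2R)=O(c_1^2)$, $k\tau\,\partial_\tau\Phi(s,0)=O(k\rho\delta^{N+1}/R)=O(c_1 c_2^{N+1})$, and all higher-order terms are $o(1)$ as $k\to\infty$. The non-oscillatory kernel factor $(\gamma_2(s)-\gamma_1(\tau))\cdot\nu(\gamma_1(\tau))/|\gamma_2(s)-\gamma_1(\tau)|$ remains within $o(1)$ of $\hat u\cdot\nu(x^1)\ne 0$ on the supports, and the kernel magnitude is $\sim\sqrt{k/R}$. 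Taking $c_1,c_2$ small enough that $|k\Phi|\le\pi/4$ uniformly secures $\mathrm{Re}\,e^{ik\Phi}\ge 1/\sqrt 2$, and hence
\[
|\langle D_k\varphi,\psi\rangle|\gtrsim \sqrt{k/R}\,\rho\,\delta,\quad \|\varphi\|_{L^2}\sim\sqrt\rho,\quad \|\psi\|_{L^2}\sim\sqrt\delta,
\]
giving $\|D_k\|\gtrsim\sqrt{k\rho\delta/R}\sim k^{1/4-1/(4(N+1))}=k^{N/(4N+4)}$, as required.

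The main technical hurdle is the uniform-in-$k$ bookkeeping of all remainder terms: the $O((kR)^{-1})$ remainder in the Hankel asymptotic, higher-order Taylor corrections to $|\gamma_2(s)-\gamma_1(\tau)|$ in both variables, and perturbations of $\nu(\gamma_1(\tau))$ and of the arclength element away from their base-point values. The argument succeeds precisely because each potentially dangerous term in $\Phi$ carries either a factor of $\tau$ (controlled by $\rho=O(k^{-1/2})$) or a factor of $f(s)$ (controlled by $\delta^{N+1}=O(k^{-1/2})$), so that the scales $\rho=c_1 k^{-1/2}$ and $\delta=c_2 k^{-1/(2(N+1))}$ exactly balance the full residual phase; absent these hidden factors, a naive cubic-in-$s$ term like $k\delta^3/R^2$ would blow up for $N\ge 2$. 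The order of tangency $N$ enters only through this cancellation mechanism.
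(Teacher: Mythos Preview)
The paper does not contain a proof of this statement; it is quoted verbatim from \cite{CWGLL07} without argument. Your strategy --- bound $\|D_k\|$ below by $|\langle D_k\varphi,\psi\rangle|/(\|\varphi\|\|\psi\|)$ with oscillatory bumps of widths $\rho\sim k^{-1/2}$ near $x^1$ and $\delta\sim k^{-1/(2N+2)}$ near $x^2$, chosen so that the residual phase is $O(1)$ on the supports --- is the right one, and your scale balance and final exponent computation are correct.

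There is, however, a genuine gap in the phase control near $x^1$. You write $\partial_\tau^2\Phi(0,0)=e^2/R$ and then estimate the quadratic term $ke^2\tau^2/(2R)=O(c_1^2)$. But the hypothesis gives only that $\Gamma^1$ is $C^1$, so $\gamma_1''$ need not exist and $\partial_\tau\Phi$ need not be Lipschitz in $\tau$. Concretely, with your choice $\varphi(\gamma_1(\tau))=e^{ikc\tau}$, the residual phase picks up a term $k\bigl(c\tau-\int_0^\tau c(r)\,dr\bigr)$, where $c(r)=\hat u\cdot\gamma_1'(r)$; on the support this is of order $k^{1/2}\omega(k^{-1/2})$ with $\omega$ the modulus of continuity of $\gamma_1'$, which is unbounded in general under mere $C^1$ regularity. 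Thus your claim that ``each potentially dangerous term in $\Phi$ carries either a factor of $\tau$ or a factor of $f(s)$'' fails for this contribution.

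The fix is simple: replace the linearised phase $c\tau$ by the exact plane-wave phase $\hat u\cdot(\gamma_1(\tau)-x^1)$ in the definition of $\varphi$. Then $\partial_\tau\Phi(s,\tau)=(\hat u-\hat v(s,\tau))\cdot\gamma_1'(\tau)$ with $\hat v=(\gamma_2(s)-\gamma_1(\tau))/|\gamma_2(s)-\gamma_1(\tau)|$, and since $|\gamma_1'|=1$ one has $|\partial_\tau\Phi|\le|\hat u-\hat v(s,\tau)|$. A direct computation shows $|\hat u-\hat v(s,0)|=O(f(s))$ and $|\hat v(s,\tau)-\hat v(s,0)|=O(|\tau|)$ using only that $\gamma_1$ is Lipschitz and $|v|$ is bounded below; integrating gives $|\Phi(s,\tau)|=O(|\tau|\,|f(s)|+\tau^2)$, which is exactly the estimate you need and requires no second derivative of $\gamma_1$. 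With this correction the rest of your argument goes through.
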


\subsection{Lower bounds on $\Vert A_{k,\eta}^{-1}\Vert $ for trapping obstacles}
\label{sec:trapping}
In \cite{CWGLL07} it is shown that if $\Omega$ is a certain type of trapping obstacle then $\Vert A_{k,\eta}^{-1}\Vert $ can be unbounded as $k\to\infty$. The type of trapping obstacle considered in \cite{CWGLL07} is an obstacle for which there exists points $P$ and $Q$ on the boundary $\Gamma$ such that:

(i) $\Gamma$ is $C^1$ in neighbourhoods of $P$ and $Q$;

(ii) the line segment joining $P$ and $Q$ lies in $\Omega_e$ and;

(iii) this line segment is normal to $\Gamma$ at $P$ and $Q$. \\
The line segment $PQ$ is an example of a {\em periodic orbit}, by which we mean that it is the possible locus of a point billiard particle moving in the exterior region $\Omega_e$ in a straight line at unit speed as on an ideal billiard table, interacting with the boundary $\Gamma$ according to the usual law of specular reflection (angle of reflection equals angle of incidence).

The specific class of trapping obstacle discussed in \cite{CWGLL07} is one for which $\Gamma$ is a straight line locally to both  $P$ and $Q$. Precisely, the following theorem is proved,  showing that $\Vert A_{k,\eta}^{-1}\Vert $ is unbounded as $k \rightarrow \infty$ for some class of trapping obstacles, at least provided $\vert \eta\vert \leq Ck$ for
some constant $C$, which is the case of course for the standard choice $\eta=k$.

\begin{theorem} \cite[Theorem~5.1]{CWGLL07}\label{th:trapping}
There exists $C>0$ such that, if $\Omega_e$ contains a square of
side length $2a$, two parallel sides of which form part of $\Gamma$,
 and
$\eta\in\R\setminus\{0\}$, then
\[
\Vert A_{k_m ,\eta}^{-1} \Vert  \ \geq \ C \  k_m^{9/10} \left( 1 +
\frac{\vert\eta \vert}{k_m} \right)^{-1}, \quad m\in\N,
\]
where $k_m:= m \pi /2a$.
\end{theorem}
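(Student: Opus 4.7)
My plan is the quasi-mode strategy: construct a density $\varphi\in L^2(\Gamma)$ for which $\|A_{k_m,\eta}'\varphi\|$ is much smaller than $\|\varphi\|$. Since $\|A_{k_m,\eta}^{-1}\|=\|(A_{k_m,\eta}')^{-1}\|\geq \|\varphi\|/\|A_{k_m,\eta}'\varphi\|$, this gives the claimed lower bound. I work with the direct operator $A_{k_m,\eta}'$ because normal-derivative traces of Helmholtz solutions appear naturally in Green's identities. Choose coordinates so that the square is $[-a,a]\times[0,2a]\subset\Omega_e$ with its top and bottom sides on $\Gamma$; the choice $k_m=m\pi/(2a)$ ensures $\sin(k_my)$ is an exact Dirichlet eigenfunction of $-\rd^2/\rd y^2$ on $(0,2a)$. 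Pick $\chi\in C_c^\infty(-1,1)$ with $\chi(0)=1$, set $\chi_\delta(x):=\chi(x/\delta)$ for a width $\delta\in(0,a]$ to be optimised, and define the quasi-mode
\[
u(x,y)\ :=\ \chi_\delta(x)\sin(k_my)\quad\text{for } (x,y)\in[-a,a]\times[0,2a],
\]
extended by $0$ elsewhere. Since $\chi_\delta$ vanishes with all derivatives at $x=\pm a$ and $\sin(k_my)$ vanishes at $y=0,2a$, the function $u$ is $C^\infty$ off $\Gamma$, compactly supported, satisfies $u|_\Gamma=0$, and has Helmholtz residual $r:=(\Delta+k_m^2)u=\chi_\delta''(x)\sin(k_my)$ supported in the square. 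Set $\varphi:=\partial_\nu u|_\Gamma$; this is supported on the parallel walls and equals $\pm k_m\chi_\delta(x)$ there, so $\|\varphi\|^2\asymp k_m^2\delta$.

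Apply Green's third identity to $u$ in $\Omega_e$ (valid because $u$ has compact support, so trivially satisfies the radiation condition): for $x\in\Omega_e$,
\[
u(x)\ =\ -\mathcal{S}_{k_m}\varphi(x) - V_{k_m}r(x),
\]
where $\mathcal{S}_{k_m}\varphi(x):=\int_\Gamma\Phi(x,y)\varphi(y)\,\rd s(y)$ and $V_{k_m}r(x):=\int_{\Omega_e}\Phi(x,y)r(y)\,\rd y$ (with $S_k=2\mathcal{S}_k|_\Gamma$). Taking the Dirichlet trace on $\Gamma$ (where $u=0$) yields $S_{k_m}\varphi = -2V_{k_m}r|_\Gamma$. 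Taking the exterior normal trace, using the jump relation $\partial_\nu\mathcal{S}_{k_m}\varphi|_\Gamma^+ = -\tfrac12\varphi+\tfrac12D'_{k_m}\varphi$ together with $\partial_\nu u|_\Gamma^+ = \varphi$, yields $\varphi+D'_{k_m}\varphi = -2\partial_\nu V_{k_m}r|_\Gamma$. Combining these two identities,
\[
A_{k_m,\eta}'\varphi\ =\ (\varphi+D'_{k_m}\varphi) - \ri\eta S_{k_m}\varphi\ =\ -2\partial_\nu V_{k_m}r|_\Gamma + 2\ri\eta V_{k_m}r|_\Gamma.
\]
The $\varphi$ term cancels, and the problem reduces to estimating the boundary traces of $V_{k_m}r$ and its normal derivative.

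The main obstacle is the sharp estimate of $V_{k_m}r|_\Gamma$. A naive bound using $\|r\|_{L^2}\asymp\delta^{-3/2}$ and Sobolev trace fails, because the Helmholtz resolvent $(-\Delta-k_m^2)^{-1}$ has no gain at the resonant frequency. Two cancellations rescue the argument: (i) the $x_1$-Fourier transform $\widehat{\chi''_\delta}(\xi_1) = -\xi_1^2\delta\hat\chi(\delta\xi_1)$ vanishes to second order at the critical transverse wavenumber $\xi_1=0$; (ii) the $x_2$-Fourier transform of $\sin(k_my)\mathbf{1}_{[0,2a]}(y)$, despite its apparent poles at $\xi_2=\pm k_m$, is globally bounded precisely because of the resonance $k_m\cdot 2a = m\pi$ (a direct computation shows the residue vanishes there). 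Using a Weyl plane-wave decomposition of $\Phi$ and contour integration in $\xi_2$, the $\xi_1^2$ factor from $\widehat{\chi''_\delta}$ exactly cancels the singularity of $(|\xi|^2-k_m^2)^{-1}$ on the light cone, so the surviving singularity in the $\xi_1$-integral is integrable.

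After combining these ingredients one obtains bounds for $\|V_{k_m}r|_\Gamma\|_{L^2(\Gamma)}$ and $k_m^{-1}\|\partial_\nu V_{k_m}r|_\Gamma\|_{L^2(\Gamma)}$ that depend in an explicit way on $\delta$ and $k_m$. Substituting into the previous display yields
\[
\frac{\|A_{k_m,\eta}'\varphi\|}{\|\varphi\|}\ \lesssim\ \left(1+\frac{|\eta|}{k_m}\right)\,\frac{\Psi(k_m,\delta)}{k_m\sqrt\delta}
\]
for a function $\Psi(k_m,\delta)$ produced by the Fourier analysis above. Optimising $\delta\in(0,a]$ (with $\delta\asymp k_m^{-\alpha}$ for a specific $\alpha$ that balances the loss $\delta^{1/2}$ in $\|\varphi\|$ against the gain from the cancellations in $\Psi$) produces the sharp exponent $k_m^{-9/10}$, giving the stated lower bound on $\|A_{k_m,\eta}^{-1}\|$. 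The technically delicate parts of this plan are the rigorous proof of the bound on $V_{k_m}r|_\Gamma$ via the contour argument and the verification that $\partial_\nu V_{k_m}r|_\Gamma$ admits an analogous estimate with only an extra factor of $k_m$.
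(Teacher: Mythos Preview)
Your overall strategy is correct and matches the approach of \cite{CWGLL07} as described in this paper: build a quasimode $u=\chi_\delta(x)\sin(k_my)$ localised in the square, set $\varphi=\partial_\nu u|_\Gamma$, and use Green's representation to write $A'_{k_m,\eta}\varphi$ in terms of boundary traces of the volume potential $V_{k_m}r$ of the Helmholtz residual $r=\chi_\delta''\sin(k_my)$. The identity you derive, $A'_{k_m,\eta}\varphi=-2\partial_\nu V_{k_m}r|_\Gamma+2\ri\eta V_{k_m}r|_\Gamma$, is exactly the mechanism used in the paper's proof of the analogous Theorem~\ref{th:trapping2} (see \eqref{vimdef}--\eqref{fm}). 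Your observations about the cancellations --- that $\widehat{\chi_\delta''}(\xi_1)=-\xi_1^2\delta\hat\chi(\delta\xi_1)$ vanishes to second order at $\xi_1=0$, and that the Fourier transform of $\sin(k_my)\mathbf{1}_{[0,2a]}$ has no pole at $\xi_2=\pm k_m$ precisely because $2k_ma=m\pi$ --- are both correct and relevant.

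However, the proposal stops short of a proof at the decisive point. You assert that ``after combining these ingredients one obtains bounds\ldots'' and that optimising $\delta$ ``produces the sharp exponent $k_m^{-9/10}$'', but you neither state what $\Psi(k_m,\delta)$ is nor carry out the optimisation. This is not a cosmetic omission: if one uses only the crude Hankel-function estimates employed in the proof of Theorem~\ref{th:trapping2} here (namely $\|V_{k_m}r|_\Gamma\|\le Ck_m^{-1/2}\|r\|_{L^2}$ and $\|\partial_\nu V_{k_m}r|_\Gamma\|\le Ck_m^{1/2}\|r\|_{L^2}$), then with $\|\varphi\|\asymp k_m\delta^{1/2}$ and $\|r\|_{L^2}\asymp\delta^{-3/2}$ one obtains only $\|A_{k_m,\eta}^{-1}\|\gtrsim k_m^{1/2}\delta^{2}(1+|\eta|/k_m)^{-1}$, which is maximised by taking $\delta$ of order one and yields merely $k_m^{1/2}$, not $k_m^{9/10}$. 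So the sharper Fourier/contour analysis you sketch is genuinely required, and the exponent $9/10$ must emerge from a concrete balance that you have not exhibited. Moreover, you must also control $V_{k_m}r$ and $\partial_\nu V_{k_m}r$ on the portion of $\Gamma$ away from the two parallel sides (where your Plancherel-in-$x_1$ argument does not directly apply); this part of $\Gamma$ can approach the corners of the square, so the distance to $\mathrm{supp}\,r$ is only $a-\delta$, and this constraint interacts with the choice of $\delta$. Until the bound $\Psi(k_m,\delta)$ is written down explicitly and the optimisation carried out, the exponent $9/10$ remains unproved.
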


Theorem \ref{th:trapping} relates to the case when the periodic orbit is between straight line parts of $\Gamma$. A key idea in its proof is the construction of a  {\em quasimode} for the Helmholtz equation in  $\Omega_e$, by which we mean a function $v\in H^2(\Omega_e)$ which satisfies $\Delta v + k^2 v = g$ with $g\in L^2(\Omega_e)$ having a small norm relative to that of $\Delta v$; precisely, the quasimode is constructed, dependent on $k$, in such a way that if $k=k_m = m\pi/(2a)$ then $\|g\|_{L^2(\Omega_e)}/\|\Delta v\|_{L^2(\Omega_e)} = O(k^{-2})$ as $k\to\infty$. We note that the rate of growth of $\|A_{k,\eta}^{-1}\|$ predicted in Theorem \ref{th:trapping} will be confirmed by numerical calculations in \S4, and cf.\ \cite[Fig.~4.7] {LoMe10}.

A periodic orbit between two parallel straight lines is neutrally stable, by which we mean that a small initial perturbation in the point billiard's position or direction will cause a perturbation to the billiard motion which grows at most linearly with time. If the parts of $\Gamma$ neighbouring $P$ and $Q$ are curved slightly, so that the periodic orbit becomes stable, then the construction of a quasimode becomes possible for which $\|g\|_{L^2(\Omega_e)}/\|\Delta v\|_{L^2(\Omega_e)}$ decreases very rapidly as $k\to\infty$ through some unbounded sequence of values (see \cite{Keller85,Lazutkin99} and the references therein), which leads to a very fast growth in  $\|A_{k ,\eta}^{-1} \|$ as $k\to\infty$ through the same sequence of values.

We will prove this statement in Theorem \ref{th:trapping2} below in a case for which a complete proof can be given by fairly elementary arguments. This simplest case is that in which the parts of $\Gamma$ neighbouring $P$ and $Q$ form part of the boundary of an ellipse, precisely an ellipse of which $PQ$ is the shortest periodic orbit, in which case the quasimode can be constructed by perturbing a so-called  {\em bouncing ball mode} (see \cite{Keller85}) eigenfunction of the ellipse. This mode can be written down explicitly in terms of Mathieu functions and can be shown to be exponentially localised around the stable periodic orbit $PQ$. (For details  see the appendix, and for a visualization of several of these eigenfunctions see Figure \ref{fig:ellipsemodes} below.) An example of an exterior domain $\Omega_e$ and the corresponding scattering object $\Omega$ which satisfies the conditions of Theorem \ref{th:trapping2} is the obstacle labeled `Elliptic cavity' in Figure \ref{fig:shapes1} below.

\begin{theorem} \label{th:trapping2}
If, for some $a_1>a_2>0$, $\Omega_e$ contains the ellipse $E := \{(x_1,x_2):(x_1/a_1)^2+(x_2/a_2)^2 < 1\}$, and if $\Gamma$ coincides with the boundary of this ellipse in neighbourhoods of the points $(0,\pm a_2)$, then there exists a sequence $0< k_0< k_1<k_2<\ldots$, with $k_m\to\infty$ as $m\to\infty$, such that, for some $\gamma >0$ and $C>0$,
\begin{equation}\label{eq:tr22}
\Vert A_{k_m ,\eta}^{-1} \Vert  \geq C \re^{\gamma k_m}\, \left(1+ \frac{\vert\eta \vert}{k_m} \right)^{-1},
\end{equation}
for
$\eta\in\R\setminus\{0\}$ and $m = 0,1,2,\ldots\,$.
\end{theorem}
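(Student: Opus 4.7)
The plan is to follow the quasimode strategy of Theorem~\ref{th:trapping}, but to replace the parallel-plate mode by a genuine Dirichlet eigenfunction of the ellipse $E$. The bouncing-ball eigenfunctions, whose exponential localisation around the minor axis is proved in the appendix, will yield a quasimode whose Helmholtz residual is exponentially small in $k_m$ rather than only polynomially small, thereby converting the polynomial lower bound of Theorem~\ref{th:trapping} into the exponential bound~\eqref{eq:tr22}. Throughout I work with $\Apke$ and use $\|\Apke^{-1}\|=\|\Ake^{-1}\|$.

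First, invoking the appendix I pick a sequence $0<k_0<k_1<\cdots\to\infty$ and $L^2$-normalised Dirichlet eigenfunctions $\phi_m$ of $E$ with eigenvalue $k_m^2$, satisfying a pointwise bound of the form $|\phi_m(x)|+k_m^{-1}|\nabla\phi_m(x)|\leq C\re^{-\gamma_0 k_m}$ for all $x\in\bar E$ with $|x_1|\geq\alpha_0$. Fix $\alpha\in(0,\alpha_0)$ small enough that $\bar E\cap\{|x_1|\leq 2\alpha\}$ meets $\partial E$ only inside the two cap neighbourhoods of $(0,\pm a_2)$ where $\Gamma=\partial E$, and let $\chi\in C^\infty(\R)$ be a cutoff equal to $1$ on $[-\alpha,\alpha]$ and supported in $[-2\alpha,2\alpha]$. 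Set
\[
v_m(x)\ :=\ \chi(x_1)\phi_m(x),\quad x\in\bar E,
\]
extended by zero to $\Omega_e\setminus\bar E$. Because $\chi$ vanishes in a neighbourhood of the non-cap part of $\partial E$ (which lies strictly inside $\Omega_e$), no surface distributions appear at the junction and $v_m\in H^2(\Omega_e)\cap H^1_0(\Omega_e)$; the residual $g_m:=(\Delta+k_m^2)v_m = 2\chi'(x_1)\partial_{x_1}\phi_m+\chi''(x_1)\phi_m$ is supported in $\{\alpha\leq|x_1|\leq 2\alpha\}$, and the localisation estimate delivers $\|g_m\|_{L^2(\Omega_e)}\leq C_1 k_m\re^{-\gamma_0 k_m}$.

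Next, I pass from the quasimode to the BIE. Define $\varphi_m:=\partial_\nu v_m|_\Gamma$: off the caps $v_m$ vanishes in a neighbourhood so $\varphi_m=0$ there, while on the caps $\chi\equiv 1$ and $\varphi_m=\partial_\nu\phi_m$. Combining the Rellich--Pohozaev identity $\int_{\partial E}(x\cdot n_E)|\partial_{n_E}\phi_m|^2\,\rd s=2k_m^2$ with the exponential smallness of $\partial_{n_E}\phi_m$ away from the caps yields $\|\varphi_m\|_{L^2(\Gamma)}\geq c_1 k_m$ for all sufficiently large $m$. To embed $v_m$ into the scattering framework, put $u^i_m(x):=-\int_{\Omega_e}\Phi(x,y)g_m(y)\,\rd y$; this is a radiating Helmholtz solution of $(\Delta+k_m^2)u^i_m=g_m$, and $u^s_m:=v_m-u^i_m$ is therefore a radiating Helmholtz solution in $\Omega_e$ with $u^s_m+u^i_m=v_m=0$ on $\Gamma$. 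Hence $\varphi_m$ is the Neumann trace of the total field for the incident wave $u^i_m$, and by \eqref{eqn:2ndkind} it satisfies $\Apke\varphi_m = 2(\partial_\nu-\ri\eta)u^i_m|_\Gamma$. Since $\mathrm{dist}(\mathrm{supp}\,g_m,\Gamma)\geq\alpha/2$, the large-argument asymptotics of $\Phi$ and $\nabla\Phi$ give $\|u^i_m\|_{L^2(\Gamma)}\leq Ck_m^{-1/2}\|g_m\|$ and $\|\partial_\nu u^i_m\|_{L^2(\Gamma)}\leq Ck_m^{1/2}\|g_m\|$, so
\[
\|\Ake^{-1}\|\ \geq\ \frac{\|\varphi_m\|}{\|\Apke\varphi_m\|}\ \geq\ \frac{c_1 k_m}{C_2 k_m^{-1/2}(k_m+|\eta|)\cdot C_1 k_m\re^{-\gamma_0 k_m}}\ =\ \frac{C_3\,\re^{\gamma_0 k_m}}{k_m^{1/2}(1+|\eta|/k_m)},
\]
and absorbing the $k_m^{-1/2}$ factor into the exponent by taking $\gamma<\gamma_0$ delivers~\eqref{eq:tr22}.

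The main obstacle is designing $\chi$ to meet three simultaneous demands: (i) $v_m\in H^2(\Omega_e)$, which forces $\chi\equiv 0$ on the non-cap part of $\partial E$ so that no boundary delta-function enters $g_m$; (ii) $\chi\equiv 1$ on the cap portion of $\partial E$ so that $\varphi_m$ inherits the full bouncing-ball Neumann trace of $\phi_m$; (iii) $\mathrm{supp}\,\chi'$ is contained in a region where both $\phi_m$ and $\nabla\phi_m$ are exponentially small, so that $\|g_m\|$ is exponentially small. The exponential localisation result of the appendix is precisely what makes all three demands simultaneously achievable; the rest is a standard volume-potential and Rellich-identity calculation.
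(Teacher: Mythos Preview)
Your overall strategy matches the paper's: localize a bouncing-ball eigenfunction of $E$ by a cutoff, set up the direct integral equation with incident field given by a volume potential of the Helmholtz residual $g_m$, and compare $\|\varphi_m\|$ with $\|\Apke\varphi_m\|$. However two of your steps do not work as written.

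First, the claim $\mathrm{dist}(\mathrm{supp}\,g_m,\Gamma)\geq\alpha/2$ is false. Since $g_m=2\chi'(x_1)\partial_{x_1}\phi_m+\chi''(x_1)\phi_m$ is supported in the strip $\{\alpha\le|x_1|\le 2\alpha\}\cap\bar E$, and (by your own choice of $\alpha$) the portion of $\partial E$ with $|x_1|\le 2\alpha$ lies inside the caps, i.e.\ inside $\Gamma$, the support of $g_m$ actually \emph{touches} $\Gamma$ (indeed $2\chi'(x_1)\partial_{x_1}\phi_m$ need not vanish there). Consequently you cannot invoke large-argument asymptotics of $\Phi$ and $\nabla\Phi$ to bound $u^i_m$ and $\partial_\nu u^i_m$ on $\Gamma$. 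The paper obtains the same bounds $\|u^i_m\|_{L^2(\Gamma)}\le Ck_m^{-1/2}\|g_m\|$ and $\|\partial_\nu u^i_m\|_{L^2(\Gamma)}\le Ck_m^{1/2}\|g_m\|$ without any separation assumption: for $u^i_m$ it uses the global kernel bound $|\Phi_{k}(x,y)|\le(8\pi k|x-y|)^{-1/2}$ together with a Schur-type estimate, and for $\partial_\nu u^i_m$ it splits $\nabla\Phi_{k_m}=\nabla\Phi_0+(\nabla\Phi_{k_m}-\nabla\Phi_0)$, controlling the first piece via Newtonian-potential mapping properties $L^2(E)\to H^1(E)$ plus the trace theorem, and the second via the pointwise bound $|\nabla_x(\Phi_{k}-\Phi_0)(x,y)|\le C\sqrt{k/|x-y|}$.

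Second, you invoke the appendix for a \emph{pointwise} bound $|\phi_m(x)|+k_m^{-1}|\nabla\phi_m(x)|\le C\re^{-\gamma_0 k_m}$ on $\{|x_1|\ge\alpha_0\}$, and later use exponential smallness of $\partial_{n_E}\phi_m$ away from the caps for the Rellich argument. The appendix only proves the $L^2$ localisation estimate \eqref{finalfinal}; it gives no pointwise or boundary-trace bounds. The paper avoids this by staying in $L^2$ throughout: the gradient control $\|\nabla u_m\|_{L^2(E_-)}\le C k_m\|u_m\|_{L^2(E_+)}$ is obtained by a weighted energy identity (multiplying $-\Delta u_m=k_m^2 u_m$ by $\tilde\chi u_m$ with a carefully chosen $\tilde\chi$), and the lower bound on $\|\partial_\nu v_m\|_{L^2(\Gamma)}$ comes not from Rellich but from Green's representation for $v_m$, which yields $\|v_m\|_{L^2(E)}\le Ck_m^{-1/2}\bigl(\|g_m\|+\|\partial_\nu v_m\|_{L^2(\Gamma)}\bigr)$ and hence $\|\partial_\nu v_m\|_{L^2(\Gamma)}\ge Ck_m^{1/2}$. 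Your Rellich route would give a sharper power $k_m$ instead of $k_m^{1/2}$, but it requires boundary-trace localisation of $\partial_{n_E}\phi_m$ that you have not established.
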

\begin{proof}
 In the appendix we focus on a particular subset of the eigenfunctions of the Laplace operator with Dirichlet boundary conditions in the ellipse $E$. These are the functions $u_{m,0}\in C^2(\bar E)$, $m=0,1,\ldots$, defined by $u_{m,0}(x) =$ $\mathrm{Mc}_0^{(1)}(\mu,q_m) \mathrm{ce}_0(\nu, q_m)$, $x\in E$, where the elliptic coordinates $(\mu,\nu)$ and standard Mathieu function notation are as defined in the appendix. The important property of the function  $u_{m,0}$  is that it satisfies the eigenproblem \eqref{eigen} for wavenumber $k=k_m$, where $k_m = 2\sqrt{q_m/(a_1^2-a_2^2)}$ and $q_m$ is the $(m+1)$th positive solution of the equation \eqref{mode} in the case $n=0$, with $q_m\to \infty$ (so that $k_m\to\infty$) as $m\to\infty$.
 It is shown in the appendix (see \eqref{finalfinal}) that this particular subset of eigenfunctions $u_{m,0}$, $m=0,1,\ldots$,  is a family of bouncing ball modes, with $u_{m,0}$ becoming increasingly localized around the periodic orbit $ O:= \{(0,x_2): |x_2|\leq a_2\}$ as $m\to\infty$.

 We will now construct a quasimode $v_m$ on $\Omega_e$ by a suitable modification and extension of $u_{m,0}$. Let $\chi\in C^\infty(\R^2)$ be compactly supported and such that $\chi(x)=1$ in some neighbourhood of $O$ while $\chi(x)=0$ in some neighbourhood of $\partial E \setminus \Gamma$. Abbreviate $u_{m,0}$ as $u_m$ and define $v_m\in C^2(\bar \Omega_e)$ by $v_m(x) := \chi(x)u_m(x)$, $x\in \bar E$, $v_m(x) := 0$, $x\in \Omega_e \setminus E$. Then, in $\Omega_e$,
\[
\Delta v_m + k_m^2v_m = g_m,
\]
where $g_m(x) = 0$ for $x\in \Omega_e\setminus E$, while
$$
g_m = u_{m} \Delta \chi   + 2 \nabla \chi \cdot \nabla u_{m}
$$
in $E$. Let $E_- := \{x\in E: |x_1| > \epsilon_-\}$ and $E_+ := \{x\in E: |x_1| > \epsilon_+\}$, where $\epsilon_->\epsilon_+ >0$ are chosen sufficiently small so that $\chi = 1$ in $E\setminus E_-$. Then, where $\|.\|_\infty$ denotes the usual supremum norm on $C(\bar E)$ and $\|\cdot\|_2$ the usual norm on $L^2(E)$ and $\|\cdot\|_{L^2(E_\pm)}$ the $L^2$ norm on $E_\pm$, we see that
$$
\|g_m\|_2 \leq \|u_m\|_{L^2(E_-)} \|\chi\|_\infty + 2 \|\nabla \chi\|_\infty \|\nabla u_m\|_{L^2(E_-)}.
$$

In the remainder of the proof let $C$ denote a positive constant, whose value does not depend on $m$, but which is not necessarily the same at each occurrence. By \eqref{finalfinal}, for some $\beta>0$,
\begin{equation} \label{bound3}
\|u_m\|_{L^2(E_\pm)} \leq C \re^{-\beta k_m} \|u_m\|_2,
\end{equation}
for $m=0,1,\ldots\,$.
Further, $\|\nabla u_m\|_{L^2(E_-)}$ can be bounded by a constant multiple of $k_m\|u_m\|_{L^2(E_+)}$, so that
\begin{equation} \label{boundb}
\|g_m\|_2 \leq C k_m \re^{-\beta k_m} \|u_m\|_2, \quad m = 0,1, \ldots \, .
\end{equation}

 To see this last claim choose $f\in C^1(\R)$ such that: (i) $1\geq f(s) \geq 0$ for $s\in \R$; (ii) $f(s) = 0$ for $|s|\leq \epsilon_+$; (iii) $f(s) = 1$ for $|s| \geq \epsilon_-$;  (iv) for some constant $M>0$, $|f^\prime(s)|/(f(s))^{1/2} \leq M$ for all $s\in \R$ for which $f(s)> 0$. (This can be achieved by defining $f$ by $f(s) = P((|s|-\epsilon_+)/(\epsilon_--\epsilon_+))$ for $\epsilon_+\leq |s| \leq \epsilon_-$, where $P(t) = t^2(3-2t)$.) Define $\tilde \chi \in C^1(\bar E)$ by $\tilde \chi(x) = f(x_1)$, $x\in \bar E$, and note that $1\geq \tilde \chi(x)\geq 0$, for $x\in E$, that $\tilde \chi(x) = 0$ for $x\in E\setminus E_+$ and $\tilde \chi(x) = 1$ for $x\in E_-$, and that $|\nabla \tilde \chi(x)|/\sqrt{\tilde \chi(x)}\leq M$ for all $x\in E$ for which $\tilde \chi(x)>0$. Now, by Green's theorem and since $u_m$ is an eigenfunction in $E$,
 $$
 k_m^2 \int_E \tilde \chi u_m^2 dx = -\int_E \tilde \chi u_m \Delta u_m \, dx = \int_E \nabla(\tilde \chi u_m)\cdot  \nabla u_m \, dx,
 $$
 so that
 $$
 \int_E \tilde \chi (\nabla u_m)^2 dx \leq k_m^2 \int_{E_+}  u_m^2 dx + M \int_E \sqrt{\tilde \chi}\, |\nabla u_m| \, u_m \,dx.
 $$
 Applying Cauchy-Schwarz and noting that $2ab \leq \eta a^2 + \eta^{-1} b^2$, for all $\eta >0$ and $a,b\geq 0$, we see that
 $$
 \|\sqrt{\tilde \chi} \,\nabla u_m \|_2^2 \leq k_m^2 \|u_m\|_{L^2(E_+)}^2 + \frac{M\eta}{2}\|\sqrt{\tilde \chi} \,\nabla u_m \|_2^2 +\frac{M}{2\eta}\|u_m \|_{L^2(E_+)}^2,
 $$
 for all $\eta>0$. Choosing $\eta = M^{-1}$ we see that
 $$
 \|\nabla u_m \|_{L^2(E_-)} \leq \|\sqrt{\tilde \chi} \,\nabla u_m \|_2 \leq c_m \|u_m \|_{L^2(E_+)},
 $$
 where $c_m := \sqrt{M^2 + 2 k_m^2}$.

 Next note that
 \begin{equation} \label{bb}
 \|v_m\|_2^2 \geq \int_{E\setminus E_+} u_m^2 \, dx = \|u_m\|_2^2 - \|u_m\|^2_{L^2(E_+)} \geq \frac{1}{4} \|u_m\|_2^2,
 \end{equation}
 for all sufficiently large $m$, by \eqref{bound3}. Thus, and noting \eqref{boundb},
 $$
 \|\Delta v_m\|_2 = \|k_m^2 v_m - g_m\|_2 \geq k_m^2 \|v_m\|_2 - \|g_m\|_2 \geq \frac{k_m^2}{4} \|u_m\|_2,
 $$
 for all sufficiently large $m$. Combining this bound with \eqref{boundb} we see that
 \begin{equation} \label{firstb}
\frac{\|g_m\|_{L^2(\Omega_e)}}{\|\Delta v_m\|_{L^2(\Omega_e)}} =\frac{\|g_m\|_2}{\|\Delta v_m\|_2} \leq C k_m^{-1} \re^{-\beta k_m} \leq C \re^{-\gamma k_m}, \; m = 0,1,\ldots,
 \end{equation}
 for some $0<\gamma<\beta$.

To see that \eqref{firstb} induces exponential growth of $\Vert A_{k_m ,\eta}^{-1} \Vert$, we proceed as in the proof of \cite[Theorem 5.1]{CWGLL07} and define $v^i_m\in C(\R^2)\cap H_{\mathrm{loc}}^2(\R^2)$ by
\begin{equation} \label{vimdef}
v^i_m(x) := \int_E \Phi_{k_m}(x,y) g_m(y) \, dy, \quad x\in \R^2,
\end{equation}
where $\Phi_{k_m}$ denotes the fundamental solution $\Phi$ of the Helmholtz equation in 2D in the case $k=k_m$. Then we can view $v_m\in C^2(\bar \Omega_e)$ as the total field for the problem of scattering by the obstacle $\Omega$ in the case when $v_m^i$ is the incident field. For defining $v^s_m:= v_m-v_m^i$ it holds that $\Delta v^s_m + k_m^2 v^s_m = 0$ in $\Omega_e$, that $v^s_m$ satisfies the Sommerfeld radiation condition (since $v^i_m$ does and $v_m$ is compactly supported), and that $v^s_m = - v^i_m$ on $\Gamma$. It follows, arguing as in the proof of \cite[Theorem~5.1]{CWGLL07}, that
$$
A^\prime_{k_m ,\eta} \frac{\partial v_m}{\partial \nu} = f_m
$$
(cf.\ \eqref{eqn:2ndkind}), where
\begin{equation} \label{fm}
f_m(x) :=2\frac{\partial v^i_m}{\partial \nu}(x) - 2\ri \eta v^i_m(x), \quad x\in\Gamma.
\end{equation}
Since $\Vert A_{k_m ,\eta}^{-1} \Vert=\Vert (A^\prime_{k_m ,\eta})^{-1} \Vert$, our proof of \eqref{eq:tr22} will be completed if we can show that, for some constant $\gamma>0$,
\begin{equation} \label{boundfinal}
\left\|\frac{\partial v_m}{\partial \nu}\right\|_{L^2(\Gamma)} \geq C \re^{\gamma k_m} \left(1+ \frac{\vert\eta \vert}{k_m} \right)^{-1} \|f_m\|_{L^2(\Gamma)},
\end{equation}
for $m=0,1,\ldots$ and $\eta\in \R\setminus\{0\}$.

To see that \eqref{firstb} implies \eqref{boundfinal}, we use \eqref{vimdef}, and we also  apply Green's representation theorem \cite{CK92} to $v_m$ to give that
\begin{equation} \label{rep3}
v_m(x) = \int_{\Omega_e} \Phi_{k_m}(x,y) g_m(y)\, dy + \int_{\Gamma}\Phi_{k_m}(x,y)
  \frac{\partial v_m}
  {\partial \nu}(y) \, ds(y), \quad x \in \Omega_e.
\end{equation}
Using the bound (e.g.\ \cite{CWGLL07}) that $|H_0^{(1)}(t)|\leq \sqrt{2/(\pi t)}$, for $t > 0$, which implies that
$$
|\Phi_{k_m}(x,y)| \leq (8\pi k_m|x-y|)^{-1/2},
$$
we easily deduce from \eqref{rep3} that
$$
\|v_m\|_2 \leq  C k_m^{-1/2}\left( \|g_m\|_2 + \left\|\frac{\partial v_m}{\partial \nu}\right\|_{L^2(\Gamma)}\right),
$$
for $m=0,1,\ldots\,$.
Combining this bound with \eqref{boundb} and \eqref{bb} we see that
\begin{equation} \label{bb2}
\left\|\frac{\partial v_m}{\partial \nu}\right\|_{L^2(\Gamma)} \geq C k_m^{1/2}  \|u_m\|_2,
\end{equation}
for $m=0,1,\ldots\,$. Similarly, it follows from \eqref{vimdef} that
\begin{equation} \label{bb3}
\|v^i_m\|_{L^2(\Gamma)} \leq C k_m^{-1/2} \|g_m\|_2,
\end{equation}
and that
\[
\nabla v^i_m(x) = w_m^{(0)}(x) + w_m^{(1)}(x),
\]
where, for $x\in \R^2$,
$$
w_m^{(0)}(x) := \int_E \nabla_x \Phi_{0}(x,y) g_m(y) \, dy, \; w_m^{(1)}(x) := \int_E \nabla_x \left(\Phi_{k_m}(x,y)-\Phi_{0}(x,y)\right)
 g_m(y) \, dy,
$$
 and (cf.\ \S2.3) $\Phi_0(x,y):=(1/2\pi)\log(1/|x-y|)$ is the standard fundamental solution of the Laplace equation. Now, from standard mapping properties of Newtonian potentials, it holds that $w_m^{(0)}\in H^1(E)$, with $\|w_m^{(0)}\|_{H^1(E)} \leq C \|g_m\|_2$. Hence, by the boundedness of the standard trace operator from $H^1(E)$ to $H^{1/2}(\partial E)\supset L^2(\partial E)$, it follows that $\|w_m^{(0)}\|_{L^2(\partial E)} \leq C \|g_m\|_2$. Further it holds (see e.g.\ \cite[equation (3.9)]{CWGLL07}) that
$$
\left|\nabla_x \left(\Phi_0(x,y)-\Phi_{k_m}(x,y)\right)\right| \leq C \sqrt{\frac{k_m}{|x-y|}},
$$
from which it follows (cf.\ \eqref{bb3}) that
$$
\|w_m^{(1)}\|_{L^2(\partial E)} \leq C k_m^{1/2}\|g_m\|_2.
$$
Hence
$$
 \left\|\frac{\partial v^i_m}{\partial \nu}\right\|_{L^2(\Gamma)} \leq C k_m^{1/2} \|g_m\|_2,
$$
and combining this bound with \eqref{bb3} and the definition \eqref{fm} of $f_m$, we see that
\begin{equation} \nonumber
\|f_m\|_{L^2(\Gamma)}\leq C k_m^{1/2} \, \left(1+ \frac{\vert\eta \vert}{k_m} \right)\,\|g_m\|_2.
\end{equation}
Finally, combining this bound with \eqref{boundb} and \eqref{bb2}, we see that
\[
\|f_m\|_{L^2(\Gamma)} \leq C k_m \re^{-\beta k_m} \left(1+ \frac{\vert\eta \vert}{k_m} \right) \left\|\frac{\partial v_m}{\partial \nu}\right\|_{L^2(\Gamma)}  ,
\]
which implies that \eqref{boundfinal} holds for $\gamma < \beta$.
\end{proof}

\subsection{Choice of $\eta$ for low $k$} \label{sec:lowk} Although the main focus of this paper is on conditioning in the limit as $k\to\infty$, for completeness we briefly address the limit $k\to0$ in this section. Conditioning in this limit was explored carefully already in the papers \cite{KrSp:83,Kr:85} where, for the case when $\Gamma$ is a sphere or circle, precise asymptotic calculations were made of the choice of $\eta$ which minimises  $\cond A_{k,\eta}$ in the limit $k\to 0$. The recommendations in these papers are for a circle/sphere of unit radius, and imply for a circle/sphere of radius $R_0>0$ that the optimal choices of $\eta$ are
\begin{equation} \label{etachoice}
\eta = \left\{\begin{array}{cc}          \frac{1}{2R_0} + O(k^2 \log k), & d = 3,\\
\{\pi^2 + 4(\log(k/2) + \gamma)^2\}^{-1/2}\{1+O(k^2\log k R_0)\}, & d=2,\end{array}\right.
\end{equation}
where $\gamma = 0.577\ldots$ is Euler's constant. We will explain in this section why these choices, for any $R_0>0$, ensure a bounded condition number of $A_{k,\eta}$ as $k\to0$ in the case of general Lipschitz $\Gamma$.

To understand this limit we need to recall what is known about integral equation formulations for the Laplace case $k=0$. Let $\Phi_0$ denote the fundamental solution of the Laplace equation,  given simply by \eqref{eq:Phi} with $k=0$ in the 3D case, and defined as in \S\ref{sec:upper} in the 2D case. Let $S_0$ and $D_0$ denote the single and double-layer potentials in the Laplace case, defined by equations \eqref{eq:S} and \eqref{eq:D} with $S$, $D$, and $\Phi$ replaced by $S_0$, $D_0$ and $\Phi_0$. It is a fairly straightforward calculation (see e.g.\ \cite{CWGLL07} for the detail in the case of Lipschitz $\Gamma$) that
\begin{equation} \label{Dk}
\|D_k-D_0\|\to 0 \mbox{ and } \|S_k-S_0\| \to 0
\end{equation}
as $k\to 0$ in the 3D case, and that the first of these results holds also in the 2D case. In the 2D case the limiting behaviour of $S_k$ is more subtle. We see from \eqref{phidif} that
\begin{equation} \label{Sk}
\left\|S_k-S_0 + \frac{1}{2\pi} \log(kR_0) \, T\right\|\to 0
\end{equation}
as $k\to 0$ where $T$ is the finite-rank integral operator defined by
$$
T \phi(x) = \int_\Gamma \phi(y) ds(y), \quad x\in \Gamma.
$$
The following limiting behaviour of $A_{k,\eta}$ is clear from \eqref{Dk} and \eqref{Sk}.

\begin{lemma} \label{lemmalow}
As $k\to0$,
\[
A_{k,\eta} = I + D_0 - \ri\eta S_0(1+o(1))
\]
in 3D, while
\[
A_{k,\eta} = I+D_0 +\ri\eta \frac{1}{2\pi} \log(kR_0) \, T - \ri \eta S_0(1+ o(1))
\]
in 2D. Thus, unless
\[
\eta=\left\{\begin{array}{cc}
              O(1), & d=3, \\
              O((\log k)^{-1}), & d=2,
            \end{array}\right. \quad \mbox{ as } k\to 0,
\]
it holds that $\|A_{k,\eta}\|\to\infty$ as $k\to 0$. On the other hand, if, for some $c_0\in \R$,
\begin{equation} \label{3Dcond}
\eta\to c_0 \quad \mbox{ as } k\to 0,
\end{equation}
in the case $d=3$ or
\begin{equation} \label{2Dcond}
\eta \frac{1}{2\pi} \log(kR_0)\to c_0 \quad \mbox{ as } k\to 0,
\end{equation}
in the case $d=2$, then
\[
\|A_{k,\eta}- A_0\| \to 0 \quad \mbox{ as } k\to 0,
\]
where
\[
A_0:=\left\{\begin{array}{cc}
              I + D_0 - \ri c_0 S_0, & d=3, \\
              I+D_0 +\ri c_0 T, & d=2.
            \end{array}\right.
\]
\end{lemma}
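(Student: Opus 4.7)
The plan is to substitute the operator-norm asymptotics \eqref{Dk} and \eqref{Sk} directly into the definition $A_{k,\eta}=I+D_k-\ri\eta S_k$. In 3D, write $D_k = D_0 + E_k^D$ and $S_k = S_0 + E_k^S$ with $\|E_k^D\|, \|E_k^S\|\to 0$ as $k\to 0$; then $A_{k,\eta} = I + D_0 - \ri\eta S_0 + E_k^D - \ri\eta E_k^S$, which is the first displayed asymptotic (reading $-\ri\eta S_0(1+o(1))$ as $-\ri\eta S_0 + o(1)$ in operator norm, with the $E_k^D$ piece absorbed). In 2D, \eqref{Sk} gives $S_k = S_0 - \frac{1}{2\pi}\log(kR_0)\, T + R_k$ with $\|R_k\|\to 0$; substituting and using $\|D_k-D_0\|\to 0$ yields the 2D asymptotic.

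For the necessity of the growth conditions on $\eta$, I would use the reverse triangle inequality. In 3D, $\|A_{k,\eta}\|\ge |\eta|\,\|S_k\| - \|I+D_k\|$; the second term is uniformly bounded as $k\to 0$, and $\|S_k\|\to\|S_0\|>0$ since $S_0$ is a nontrivial (indeed positive self-adjoint) operator on $L^2(\Gamma)$ in 3D. Hence any sequence $k\to 0$ along which $|\eta|\to\infty$ forces $\|A_{k,\eta}\|\to\infty$. In 2D the argument splits: if $\eta$ itself is unbounded along a sequence, the preceding argument still applies (and is in fact reinforced since $\|S_k\|$ now grows logarithmically as $k\to 0$); if $\eta$ is bounded but $|\eta\log(kR_0)|$ is unbounded, then $|\log k|\to\infty$ along the sequence, and the rank-one term $\ri\eta\frac{1}{2\pi}\log(kR_0)T$ dominates all other pieces, since $\|T\|>0$ (e.g.\ $T\mathbf{1}=|\Gamma|\mathbf{1}$, so $\|T\|\ge |\Gamma|$); the reverse triangle inequality then gives $\|A_{k,\eta}\|\to\infty$.

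The convergence claim is an immediate consequence of the same substitutions. Under \eqref{3Dcond} in 3D, $\|\ri\eta S_k - \ri c_0 S_0\|\le |\eta|\,\|S_k-S_0\| + |\eta-c_0|\,\|S_0\|\to 0$, which together with $\|D_k-D_0\|\to 0$ gives $\|A_{k,\eta}-A_0\|\to 0$. Under \eqref{2Dcond} in 2D, one first observes that $\eta\to 0$ necessarily, because $|\log(kR_0)|\to\infty$; hence $\|\ri\eta S_0\|\to 0$ and $\|\ri\eta R_k\|\to 0$, while $\ri\eta\frac{1}{2\pi}\log(kR_0)T\to \ri c_0 T$ in operator norm by \eqref{2Dcond}. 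Combining with $\|D_k-D_0\|\to 0$ gives $\|A_{k,\eta}-A_0\|\to 0$. There is no substantive obstacle here---the whole argument is bookkeeping built on \eqref{Dk}, \eqref{Sk}, and elementary operator-norm estimates; the only mild point to check is that $\|S_0\|>0$ in 3D and $\|T\|>0$ in 2D, both of which are immediate.
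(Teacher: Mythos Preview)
Your proposal is correct and follows exactly the route the paper intends: the paper simply states that the lemma ``is clear from \eqref{Dk} and \eqref{Sk}'' and gives no further proof, and your argument is precisely the bookkeeping that unpacks this claim. Your treatment is in fact more careful than the paper's, for instance in splitting the 2D necessity argument into the cases $\eta$ unbounded versus $\eta$ bounded with $\eta\log(kR_0)$ unbounded, and in noting explicitly that \eqref{2Dcond} forces $\eta\to 0$.
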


The above lemma, coupled with the following theorem, makes clear that it is appropriate to choose $\eta$ for low $k$ so as to satisfy \eqref{3Dcond} or \eqref{2Dcond}, for $d=3,2$, choosing $c_0\neq 0$ in each case. This choice of $\eta$ ensures that $\cond A_{k,\eta}$ remains bounded in the limit as $k\to 0$. Clearly, one such choice of $\eta$ is \eqref{etachoice}.

\begin{theorem} \label{lowk} Where $A_0$ is as defined in Lemma \ref{lemmalow}, it holds that, for $c_0\neq 0$, $A_0$ is invertible as an operator on $H^s(\Gamma)$ for $0\leq s\leq 1$, in particular as an operator on $L^2(\Gamma)$, while $A_0$ is not invertible for $c_0=0$. Thus, if \eqref{3Dcond} or \eqref{2Dcond} hold in the cases $d=3$ and $d=2$, respectively, then, as $k\to 0$, $\|A_{k,\eta}^{-1}\| =O(1)$, if  $c_0\neq 0$, while $\|A_{k,\eta}^{-1}\| \to\infty$, if  $c_0= 0$.
\end{theorem}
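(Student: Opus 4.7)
The theorem has three claims: invertibility of $A_0$ for $c_0\neq 0$, non-invertibility for $c_0=0$, and the resulting asymptotics of $\|A_{k,\eta}^{-1}\|$. The last follows immediately from the first two and the norm convergence $\|A_{k,\eta}-A_0\|\to 0$ of Lemma \ref{lemmalow}: openness of the set of invertible operators gives $\|A_{k,\eta}^{-1}\|\to\|A_0^{-1}\|$ when $A_0$ is invertible, while if $A_0\phi_0=0$ with $\|\phi_0\|=1$ then $1\le\|A_{k,\eta}^{-1}\|\,\|A_{k,\eta}\phi_0\|$ combined with $\|A_{k,\eta}\phi_0\|\to 0$ forces $\|A_{k,\eta}^{-1}\|\to\infty$. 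Non-invertibility for $c_0=0$ is also immediate: in both dimensions, applying the divergence theorem to the volume Laplace double-layer potential $\mathcal{D}_0\cdot 1$ gives $\mathcal{D}_0 1=-1$ in $\Omega$ and $\mathcal{D}_0 1=0$ in $\Omega_e$ (with $\nu$ directed into $\Omega_e$), so $D_0\cdot 1=-1$ on $\Gamma$ by taking traces. Hence $(I+D_0)\cdot 1=0$, and the constant function $1$ lies in $\ker A_0$ when $c_0=0$.

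The main work is showing $A_0$ invertible on $L^2(\Gamma)$ when $c_0\neq 0$. My plan is to split this into (i) triviality of $\ker A_0$ via a Brakhage-Werner-style argument, and (ii) $A_0$ being Fredholm of index zero as a compact perturbation of $I+D_0$. For (i) in 3D, suppose $A_0\phi=0$ and set $u:=\mathcal{D}_0\phi-\ri c_0\,\mathcal{S}_0\phi$ on $\R^3\setminus\Gamma$, where $\mathcal{D}_0$ and $\mathcal{S}_0$ denote the Laplace volume double- and single-layer potentials. The jump relations give $2u|_+=A_0\phi=0$, and since $u$ is harmonic in $\Omega_e$ with $u=O(|x|^{-1})$ at infinity, uniqueness of the exterior Dirichlet problem yields $u\equiv 0$ in $\Omega_e$, so $\partial u/\partial\nu|_+=0$. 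The jump relations then give $u|_-=-\phi$ and $\partial u/\partial\nu|_-=-\ri c_0\phi$, and Green's first identity in $\Omega$ gives
\[
\int_\Omega|\nabla u|^2\,dx \;=\; \int_\Gamma \overline{u|_-}\,(\partial u/\partial\nu|_-)\,ds \;=\; \ri c_0\,\|\phi\|^2,
\]
which is real and non-negative on the left and purely imaginary on the right, so $\phi=0$.

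The 2D case of (i) requires modification because $A_0$ has no single-layer term. I would set $u:=\mathcal{D}_0\phi$, so that $u|_+=(I+D_0)\phi/2=-\ri c_0 d/2$ with $d:=\int_\Gamma\phi\,ds$, and $u=O(|x|^{-1})$ at infinity. A Kelvin transform converts $u$ to a bounded harmonic function on a bounded domain with constant boundary value $-\ri c_0 d/2$ and value $0$ at the image of infinity, forcing $d=0$ by uniqueness. Then $u|_+=0$ and the minimum principle give $u\equiv 0$ in $\Omega_e$; continuity of the double-layer normal derivative gives $\partial u/\partial\nu|_-=0$, and uniqueness of the interior Neumann problem modulo constants yields $u\equiv\alpha$ in $\Omega$ for some constant $\alpha$. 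The double-layer jump $\phi=u|_+-u|_-=-\alpha$ combined with $d=-\alpha|\Gamma|=0$ then forces $\alpha=0$, hence $\phi=0$.

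For (ii), write $A_0=(I+D_0)+K$: in 3D $K=-\ri c_0 S_0$ is compact on $L^2(\Gamma)$ since $S_0$ maps $L^2(\Gamma)$ boundedly into $H^1(\Gamma)$ for Lipschitz $\Gamma$ and $H^1(\Gamma)\hookrightarrow L^2(\Gamma)$ compactly (Rellich); in 2D $K=\ri c_0 T$ is rank one and hence compact. The operator $I+D_0$ is Fredholm of index zero on $L^2(\Gamma)$ for bounded Lipschitz $\Gamma$ by Verchota's theory, so $A_0$ is as well, and combined with (i) this yields invertibility on $L^2(\Gamma)$. The extension to $H^s(\Gamma)$, $0\le s\le 1$, uses the standard boundedness of $D_0$, $S_0$, $T$ on $H^s(\Gamma)$ for Lipschitz $\Gamma$ together with $H^s(\Gamma)\hookrightarrow L^2(\Gamma)$, so the $H^s$-kernel is contained in the trivial $L^2$-kernel. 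The main obstacle I foresee is the 2D kernel argument, where the absence of an $S_0$ term forces triviality of $\ker A_0$ to be extracted indirectly via the Kelvin-transform uniqueness step and a careful bookkeeping of the scalar $d$.
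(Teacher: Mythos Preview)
Your overall strategy---Fredholm of index zero plus injectivity via layer-potential arguments---matches the paper's, and the explicit Brakhage--Werner and Kelvin-transform computations you give are precisely the classical injectivity proofs the paper cites from Kress and Colton--Kress. Your route to Fredholmness differs slightly from the paper's (you perturb Verchota's $I+D_0$ by a compact operator, while the paper perturbs the known-invertible $A_{k,\eta}$ by the compact difference $A_0-A_{k,\eta}$), but both are valid.

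There is, however, a regularity gap in your injectivity step for general Lipschitz $\Gamma$. In 3D you apply Green's first identity $\int_\Omega|\nabla u|^2=\int_\Gamma\overline{u|_-}\,\partial_\nu u|_-\,ds$ to $u=\mathcal{D}_0\phi-\ri c_0\mathcal{S}_0\phi$. For $\phi$ merely in $L^2(\Gamma)$ the double-layer part $\mathcal{D}_0\phi$ lies only in $H^{1/2}(\Omega)$, so $\nabla u\notin L^2(\Omega)$ a priori and the identity is not justified as written. The same issue arises in 2D: the step ``continuity of the double-layer normal derivative gives $\partial_\nu u|_-=0$, hence $u$ constant by Neumann uniqueness'' needs $u\in H^1(\Omega)$ so that $\partial_\nu u|_-$ is a well-defined $H^{-1/2}(\Gamma)$ trace. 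The paper closes this gap differently: it first shows $A_0$ is Fredholm of index zero on \emph{both} $L^2(\Gamma)$ and $H^1(\Gamma)$ (via compact perturbation of $A_{k,\eta}$ on each space), and then invokes a standard regularity result for Fredholm operators on nested spaces to conclude that any $L^2$-kernel element automatically lies in $H^1(\Gamma)\subset H^{1/2}(\Gamma)$. With $\phi\in H^{1/2}(\Gamma)$ the layer potentials land in $H^1(\Omega)$ and your Green's-identity and Neumann-uniqueness arguments go through cleanly. (An alternative would be to invoke the nontangential-limit Green identities from the Dahlberg--Verchota $L^2$ theory, but you have not done so.)

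A second, smaller gap: your extension to $H^s(\Gamma)$ for $0<s\le 1$ argues only that the $H^s$-kernel is trivial (via $H^s\hookrightarrow L^2$). Invertibility also requires Fredholmness of index zero on $H^s$. The paper obtains this on $H^1(\Gamma)$ by the same compact-perturbation argument and then reaches intermediate $s$ by interpolation.
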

\begin{proof}
The last sentence follows immediately from standard operator perturbation results and \eqref{2Dcond} and \eqref{3Dcond} once the first sentence is proved. In the case $c_0=0$ it is well known that $A_0$ is not injective, having a non-trivial null space which is the set of constant functions, see e.g.\ \cite[Theorem 6.20]{Kress}, \cite{verchota}. To show invertibility of $A_0$ for $c_0\neq 0$ we note first that, by interpolation, it is enough to show invertibility on $H^s(\Gamma)$ for $s=0$ and 1 \cite{mclean}. Further, since the difference $A_0- A_{k,\eta}$ is a compact operator on $L^2(\Gamma)$ and on $H^1(\Gamma)$ (see e.g.\ the proof of Theorem 2.7 in \cite{CWL06}) and since $A_{k,\eta}$ is invertible, it holds that $A_0$ is Fredholm of index zero on $L^2(\Gamma)$ and on $H^1(\Gamma)$, so that it is invertible if and only if it is injective. Moreover, since $A_0$ is
Fredholm with the same index on $H^1(\Gamma)$ and $L^2(\Gamma)$, and $L^2(\Gamma)$ is dense in $H^1(\Gamma)$,
it follows from a standard result on Fredholm operators (see e.g. \cite[\S1]{prsil}), that the null-space of $A_0$ is a subset of $H^1(\Gamma) \supset H^{1/2}(\Gamma)$. In the case that $\Gamma$ is $C^2$ that there are no non-trivial functions in the null-space of $A_0$ in $C(\Gamma)$  is shown in \cite[Theorem 6.24]{Kress} in the case $d=2$ and in \cite[Theorem 3.33]{CK83} in the case $d=3$. In the case when $\Gamma$ is Lipschitz the same arguments can be used to prove injectivity of $A_0$ in $H^{1/2}(\Gamma)$, replacing the mapping properties of layer potentials in classical function spaces in \cite{CK83,Kress} with those in Sobolev spaces in \cite{mclean} (cf.\ the proof of Theorem 2.5 in \cite{CWL06}).
\end{proof}

\subsection{Bounds on condition numbers and choice of $\eta$} \label{sec:cond}
In this section we bring together the results from the sections above and explore their implications for the conditioning of $A_{k,\eta}$, and what this then implies regarding the choice of $\eta$ to minimise $\cond A_{k,\eta}$. We have already noted in \S\ref{sec:starlike} and \S\ref{sec:lowk} recommendations made in the literature regarding the choice of $\eta$, mainly based on study of the case when $\Gamma$ is a circle or sphere. Overwhelmingly (see e.g.\ \cite{Kr:85, KrSp:83, Ami90, Ami93, Gi:97, BrKu:01,BrPC07, DoGrSm:07}) the guidance is to take $\eta$ proportional to $k$ for all but small values of $k$. The choice of $\eta$ for small $k$ has been discussed already in \S\ref{sec:lowk}. One choice of $\eta$, recommended by Kress \cite{Kr:85} for the 3D case, that we have studied in \S\ref{sec:starlike}, is $\eta = \max(1/(2R_0),k)$. This choice, by Lemma \ref{lemmalow} above, is not suitable in the 2D case for low $k$, since with this choice $\|A_{k,\eta}\|\to\infty$ as $k\to0$. An alternative choice, which satisfies \eqref{2Dcond} with $c_0\neq 0$, and which we will use for computations in \S\ref{sec:num},  is
\begin{equation} \label{etachoice2}
\eta := \left\{ \begin{array}{cc}
                  (R_0(1-\log(kR_0))^{-1}, & 0<kR_0 \leq 1, \\
                  k, & kR_0 \geq 1.
                \end{array}\right.
\end{equation}
Here $R_0$ is a length scale of the scatterer $\Omega$; we choose $R_0$ as defined in \S\ref{sec:starlike} in \S\ref{sec:num}. The following theorem, which follows from  \eqref{eqn:normsequal2}, \eqref{Betak}, \eqref{Marko_fb}, \eqref{akb}, and Theorem \ref{lowk}, is a sharpening of results in \cite[Section 6]{CWGLL07}.
\begin{theorem} \label{thmcond1}
Suppose that $\Gamma$ is piecewise $C^2$ and starlike, in the sense of \S\ref{sec:starlike}, and that $\eta$ is given by \eqref{eta_opt} in the case $d=3$, by \eqref{etachoice2} in the case $d=2$. Then, for some constant $C\geq 1$,
$$
1 \leq \|A_{k,\eta}^{-1}\| \leq C, \quad 1 \leq \|A_{k,\eta}\| \leq C(1+k^{(d-1)/2}),
$$
so that
$$
\cond A_{k,\eta} \leq C^2(1+k^{(d-1)/2})
$$
for all $k>0$. In the case $d=2$ we have a sharper lower bound for $k$ large, so that, for some $c>1$,
\begin{equation} \label{sand}
c^{-1}(1+ k^{1/3}) \leq \cond A_{k,\eta} \leq c(1+k^{1/2}),
\end{equation}
for $k>0$.
\end{theorem}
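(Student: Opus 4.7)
My plan is a case analysis on the size of $k$ and the dimension $d$, combining the bounds of \S\ref{sec:starlike}--\S\ref{sec:lowk}. The easy lower bounds $\|\Ak\|\geq 1$ and $\|\Ak^{-1}\|\geq 1$ are immediate from Lemma~\ref{lem}, since piecewise $C^2$ guarantees a $C^1$ portion of $\Gamma$; so the work lies in the two upper bounds and the sharpened lower bound on $\cond\Ak$ in 2D.

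For the upper bound on $\|\Ak^{-1}\|$, I first handle the large-$k$ range in both dimensions. The choice \eqref{eta_opt} in 3D, and \eqref{etachoice2} restricted to $kR_0\geq 1$ in 2D, forces $|\eta|\geq k$ and $|\eta|\geq 1/(2R_0)$; inserting these into the formula for $B$ in \eqref{eqn:normsequal2} produces the $k$-independent simplification \eqref{Betak}. The remaining range $0<kR_0<1$ occurs only in 2D, and there I invoke Theorem~\ref{lowk}: one checks that $\eta\,(2\pi)^{-1}\log(kR_0)\to -1/(2\pi R_0)$ as $k\to 0^+$, so \eqref{2Dcond} holds with $c_0=-1/(2\pi R_0)\neq 0$ and Theorem~\ref{lowk} yields $\|\Ak^{-1}\|\to\|A_0^{-1}\|<\infty$. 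Since $k\mapsto\|\Ak^{-1}\|$ is continuous on $(0,\infty)$ with a finite limit at $0$, it is bounded on $(0,1/R_0]$, closing the uniform bound.

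For the upper bound on $\|\Ak\|$, I use \eqref{Marko_fb} in 3D with $|\eta|\leq k+1/(2R_0)$ to get $\|\Ak\|\leq C(1+k)=C(1+k^{(d-1)/2})$. In 2D with $kR_0\geq 1$, $\eta=k$ and \eqref{Marko_fb} gives $\|\Ak\|\leq 1+C_3+(C_1+C_2)k^{1/2}$. For small $k$ in 2D, \eqref{Marko_fb} is useless because $|\eta|k^{-1/2}$ blows up as $k\to 0^+$, so instead I invoke the refined low-frequency bound \eqref{akb}: with \eqref{etachoice2}, $|\eta|(1-\log k)=(1-\log k)/\{R_0(1-\log(kR_0))\}$ is bounded on $0<k\leq\min(1,1/R_0)$ (and tends to $1/R_0$ as $k\to 0^+$), so \eqref{akb} delivers $\|\Ak\|\leq C$ there. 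Any compact intermediate interval arising when $R_0<1$ is covered by continuity of $k\mapsto\|\Ak\|$.

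Finally, for the sharper lower bound \eqref{sand} in 2D, I pick a point $x^*$ in the interior of a $C^2$ piece of $\Gamma$. If the radius of curvature at some such $x^*$ is finite, then Corollary~\ref{cor45} with $\eta=k$ places us in its first case (since $|\eta|k^{-2/3}=k^{1/3}\to\infty$) and yields $\|\Ak\|\geq Ck^{1/3}$ for all sufficiently large $k$; if instead every $C^2$ piece is straight, so $\Gamma$ is a polygon, Theorem~\ref{th:3} gives the stronger $\|\Ak\|\geq Ck^{1/2}$. Combined with $\|\Ak^{-1}\|\geq 1$, this yields $\cond\Ak\geq Ck^{1/3}$ for large $k$, and together with $\cond\Ak\geq 1$ for all $k$ we obtain \eqref{sand}. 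The chief technical wrinkle is the small-$k$ uniform bound on $\|\Ak^{-1}\|$ in 2D: the $k$-explicit estimate \eqref{eqn:normsequal2} alone is not enough there, and the asymptotic Theorem~\ref{lowk} has to be grafted in, with the non-trivial verification that $c_0=-1/(2\pi R_0)\neq 0$ for the particular choice \eqref{etachoice2}.
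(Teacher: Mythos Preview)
Your proof is correct and follows the same route as the paper, which simply states that the theorem follows from \eqref{eqn:normsequal2}, \eqref{Betak}, \eqref{Marko_fb}, \eqref{akb}, and Theorem~\ref{lowk}, leaving the details implicit; you have filled these in accurately, including the verification that the choice \eqref{etachoice2} yields $c_0=-1/(2\pi R_0)\neq 0$ in \eqref{2Dcond}. One minor simplification: for the lower bound in \eqref{sand} you need not split into the cases of finite versus infinite radius of curvature, since Theorem~\ref{th:5} (as opposed to its Corollary~\ref{cor45}) already gives $\|\Ak\|\geq C|\eta|k^{-2/3}$ directly from local $C^2$ smoothness, with no hypothesis on the curvature.
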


For the case of a circle or sphere we saw in \S\ref{sec:circ} that the above upper bounds are not sharp; with the proposed choices of $\eta$ the sharper bound holds that $$
\cond A_{k,\eta} \leq C(1+k^{1/3}).
$$
We will investigate, in the 2D case, which of the bounds in \eqref{sand} is sharp in \S\ref{sec:num}. We will also investigate the alternative choice for $\eta$ for large $k$ proposed in \cite{BaSa:07}, namely to take $|\eta|= k^{2/3}$. It follows from \eqref{eq:normAsphere} and \eqref{eqn:normsequal2} that, when $\Gamma$ is a sphere, this choice of $\eta$ also implies
$$
\cond A_{k,\eta} \leq C k^{1/3} \quad \mbox{ for } k\geq 1.
$$
We will explore whether this estimate holds for 2D geometries in \S\ref{sec:num}. Note that for starlike polygons and the choices of $\eta$ indicated in Theorem \ref{thmcond1}, it follows from Theorem \ref{th:3} that, for some $c>1$,
\[
c^{-1}(1+ k^{1/2}) \leq \cond A_{k,\eta} \leq c(1+k^{1/2}),
\]
for $k>0$, i.e.\ it is the upper bound in \eqref{sand} that is sharp in this case. We will illustrate this in the numerical results in \S\ref{sec:num}.

For trapping obstacles, in the sense defined in \S\ref{sec:trapping}, faster rates of growth of $\cond A_{k,\eta}$ are inevitable. The following result is deduced in \cite{CWGLL07}, by combining Theorems \ref{th:3} and \ref{th:trapping}.

\begin{theorem} \label{th:condtr1}\cite[equation (6.13)]{CWGLL07} Suppose that the conditions of Theorem \ref{th:trapping} are satisfied. Then, for some $C>0$, where $k_m$ is as defined
in Theorem \ref{th:trapping},
$$
\cond A_{k_m,\eta} \geq C\,k_m^{9/10}\,\left(1+|\eta|(1+k_m)^{-1/2}\right)\left(1+ \frac{|\eta|}{k_m}\right)^{-1}, \quad m\in\N.
$$
Thus, if $\eta = c(1+k_m^p)$, for some constants $c$ and $p$, then, for some constant $\tilde C>0$,
$$
\cond A_{k_m,\eta} \geq \tilde C(1 + k_m^q), \quad m\in\N,
$$
with $q=9/10$ for $0\leq p\leq 1/2$, $q = p + 4/10$, for $1/2\leq p\leq 1$, and $q=14/10$ for $p\geq 1$.
\end{theorem}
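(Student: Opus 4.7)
The plan is to assemble Theorem \ref{th:condtr1} as a straightforward consequence of the two ingredients already available in the excerpt: the trapping lower bound on $\|A_{k_m,\eta}^{-1}\|$ provided by Theorem \ref{th:trapping}, and the straight-line lower bound on $\|A_{k,\eta}\|$ provided by Theorem \ref{th:3}. Since the hypothesis of Theorem \ref{th:trapping} places a square of side length $2a$ in $\Omega_e$ with two parallel sides of length $2a$ lying in $\Gamma$, the hypothesis of Theorem \ref{th:3} (a straight line section of length $a$) is automatically satisfied, so both bounds apply simultaneously at $k=k_m$.

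First I would write $\cond A_{k_m,\eta} = \|A_{k_m,\eta}\|\cdot\|A_{k_m,\eta}^{-1}\|$ and invoke Theorem \ref{th:trapping} to replace the inverse norm by $C k_m^{9/10}(1+|\eta|/k_m)^{-1}$. For the forward norm, combining the lower bound of Theorem \ref{th:3} (which gives $\|A_{k_m,\eta}\| \geq c|\eta|k_m^{-1/2} - 1 + O(|\eta|k_m^{-1})$) with Lemma \ref{lem} (which gives $\|A_{k_m,\eta}\|\geq 1$) yields, after absorbing the $-1$ and $O(|\eta|k_m^{-1})$ terms into a constant by taking the maximum with $1$, a bound of the form $\|A_{k_m,\eta}\|\geq c'(1+|\eta|(1+k_m)^{-1/2})$, uniformly in $\eta\in\R\setminus\{0\}$ and for all $m\in\N$. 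Multiplying the two estimates produces exactly the claimed composite bound on $\cond A_{k_m,\eta}$.

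For the corollary with $\eta = c(1+k_m^p)$, I would simply substitute and track asymptotics in $k_m$. The factor $1+|\eta|(1+k_m)^{-1/2}$ behaves like $1$ when $p\leq 1/2$ and like $k_m^{p-1/2}$ when $p\geq 1/2$, while the factor $(1+|\eta|/k_m)^{-1}$ behaves like $1$ for $p\leq 1$ and like $k_m^{1-p}$ for $p\geq 1$. Multiplying by $k_m^{9/10}$ and splitting into the three regimes $0\leq p\leq 1/2$, $1/2\leq p \leq 1$, $p\geq 1$ gives the exponents $q=9/10$, $q=p+4/10$, and $q=14/10$ respectively.

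The only mildly delicate step is the passage from the asymptotic lower bound in Theorem \ref{th:3}, which is stated as $k\to\infty$ uniformly in $\eta$, to a uniform-in-$k_m$ inequality of the form $\|A_{k_m,\eta}\|\geq c'(1+|\eta|(1+k_m)^{-1/2})$; but since $k_m = m\pi/(2a)$ has a strictly positive infimum and Lemma \ref{lem} supplies the baseline $\|A_{k_m,\eta}\|\geq 1$ at every index, this is handled by choosing $C$ small enough to accommodate finitely many early $m$. No further obstacles arise: the whole proof is essentially an algebraic combination of the two prior theorems together with tracking the three asymptotic regimes of the exponent $p$.
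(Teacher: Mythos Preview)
Your proposal is correct and follows exactly the approach the paper indicates: the paper simply states that the result is ``deduced in \cite{CWGLL07}, by combining Theorems \ref{th:3} and \ref{th:trapping}'', and that is precisely what you do, together with the baseline $\|A_{k_m,\eta}\|\geq 1$ from Lemma \ref{lem}. One minor point: the intermediate claim $\|A_{k_m,\eta}\|\geq c'(1+|\eta|(1+k_m)^{-1/2})$ uniformly for \emph{all} $m$ does not quite follow (for the finitely many small $m$ it fails when $|\eta|$ is large), but the final condition-number bound still holds because for those $m$ the right-hand side is bounded in $\eta$ while $\cond A_{k_m,\eta}\geq 1$ --- which is what your last paragraph is getting at.
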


For trapping obstacles satisfying the conditions of Theorem \ref{th:trapping2} the situation with regard to conditioning is much worse: the condition number must grow exponentially as $k$ increases through some sequence of wavenumbers.

\begin{theorem} \label{th:trapcond} Suppose that the conditions of Theorem \ref{th:trapping2} are satisfied. Then there exists a sequence $0< k_0< k_1<k_2<\ldots$, with $k_m\to\infty$ as $m\to\infty$, such that, for some $\gamma >0$ and $C>0$,
\[
\cond A_{k_m ,\eta} \geq C \re^{\gamma k_m},
\]
for
$\eta\in\R\setminus\{0\}$ and $m = 0,1,2,\ldots\,$.
\end{theorem}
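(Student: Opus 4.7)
The plan is to combine the exponential lower bound on $\|A_{k_m,\eta}^{-1}\|$ from Theorem \ref{th:trapping2} with lower bounds on $\|A_{k_m,\eta}\|$ already at our disposal in \S\ref{sec:lower}, using a simple case split on the size of $|\eta|/k_m$ to absorb the nuisance factor $(1+|\eta|/k_m)^{-1}$ that appears in \eqref{eq:tr22}.

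First I would note that the hypotheses of Theorem \ref{th:trapping2} guarantee that $\Gamma$ contains open arcs of the ellipse $\partial E$ in neighbourhoods of $(0,\pm a_2)$, so $\Gamma$ is locally $C^\infty$ (in particular $C^1$ and $C^2$) near such points. Consequently Lemma \ref{lem} is available, giving $\|A_{k,\eta}\|\geq 1$ for every $k>0$ and every $\eta\in\R\setminus\{0\}$, and so is Theorem \ref{th:5}, which supplies constants $C_0,k_0>0$ such that $\|A_{k,\eta}\|\geq C_0|\eta|k^{-2/3}$ for all $k\geq k_0$ and all $\eta\in\R$.

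Now fix $\eta\in\R\setminus\{0\}$ and let $(k_m)$ denote the sequence produced by Theorem \ref{th:trapping2}, so that $\|A_{k_m,\eta}^{-1}\|\geq C_1 \re^{\gamma k_m}(1+|\eta|/k_m)^{-1}$ for some $C_1,\gamma>0$. Restricting to $m$ with $k_m\geq k_0$, I would argue in two cases. If $|\eta|\leq k_m$ then $(1+|\eta|/k_m)^{-1}\geq 1/2$, so multiplying by $\|A_{k_m,\eta}\|\geq 1$ from Lemma \ref{lem} already yields $\cond A_{k_m,\eta}\geq (C_1/2)\re^{\gamma k_m}$. If instead $|\eta|>k_m$ then $(1+|\eta|/k_m)^{-1}\geq k_m/(2|\eta|)$, and multiplying by $\|A_{k_m,\eta}\|\geq C_0|\eta|k_m^{-2/3}$ from Theorem \ref{th:5} produces a cancellation of $|\eta|$ and leaves a bound of order $k_m^{1/3}\re^{\gamma k_m}$. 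Thus in either case $\cond A_{k_m,\eta}\geq C\re^{\gamma k_m}$ for a constant $C>0$ independent of $\eta$. The finitely many small-$m$ indices with $k_m<k_0$ are absorbed into $C$ after relabelling the sequence.

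There is no real technical hurdle, since the exponential localisation work is already packaged inside Theorem \ref{th:trapping2} via its appeal to the appendix. The one point that requires care is that the conclusion must hold uniformly over \emph{all} nonzero $\eta$, not merely the standard choice $\eta=k$; this is exactly what the case split delivers, and it succeeds only because the lower bound on $\|A_{k,\eta}\|$ supplied by Theorem \ref{th:5} grows linearly in $|\eta|$, matching the linear growth of the denominator in \eqref{eq:tr22}.
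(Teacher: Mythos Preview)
Your proposal is correct and follows essentially the same route as the paper: invoke Theorem \ref{th:trapping2} for the exponential lower bound on $\|A_{k_m,\eta}^{-1}\|$, use Lemma \ref{lem} and Theorem \ref{th:5} (applicable because $\Gamma$ is locally smooth on the elliptic arcs) for lower bounds on $\|A_{k_m,\eta}\|$, and split into the cases $|\eta|\le k_m$ and $|\eta|>k_m$ to eliminate the factor $(1+|\eta|/k_m)^{-1}$ uniformly in $\eta$. Your write-up is in fact slightly cleaner than the paper's, which records the combined norm bound as $\min(1,|\eta|k_m^{-2/3})$ rather than a maximum and states the large-$|\eta|$ case with exponent $k_m^{2/3}$; your computation giving $k_m^{1/3}$ in that case is the correct one, though of course either suffices for the exponential conclusion.
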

\begin{proof} By Lemma \ref{lem} and Theorems \ref{th:5} and \ref{th:trapping2}, there exists a sequence $0< k_0< k_1<k_2<\ldots$, with $k_m\to\infty$ as $m\to\infty$, such that, for some $\gamma >0$ and $C>0$,
$$
\cond A_{k_m,\eta} \geq C \min\left(1,|\eta| k_m^{-2/3}\right) \, \re^{\gamma k_m}\, \left(1+ \frac{\vert\eta \vert}{k_m} \right)^{-1}.
$$
But for $|\eta|\leq k_m$ this implies that $\cond A_{k_m,\eta} \geq \frac{1}{2}C \re^{\gamma k_m}$ while for $|\eta| \geq k_m$ this implies that $\cond A_{k_m,\eta} \geq \frac{1}{2}C k_m^{2/3}\re^{\gamma k_m}$, and the result follows.
\end{proof}

\section{Discrete level}
\label{sec:discrete}
\setcounter{equation}{0}

In this section we explore the relationship between $\|A_{k,\eta}\|$ and $\|A_{k,\eta}^{-1}\|$ and the norms of discrete versions of these operators, specifically the norms of matrices arising from Galerkin discretisations.

Let $X_N\subset L^2(\Gamma)$ be a finite-dimensional subspace with $P_N:L^2(\Gamma)\rightarrow X_N$ the corresponding orthogonal projection.  Let $V$ be a bounded linear operator on $L^2(\Gamma)$.  Then, given $y\in L^2(\Gamma)$, a Galerkin method for solving the equation
\[ Vx=y \]
for $x\in L^2(\Gamma)$, is to seek $x_N\in X_N$ such that
\begin{equation}
  P_N V x_N = P_N y.
  \label{eqn:gal}
\end{equation}
Let $\{\phi_1, \ldots, \phi_N\}$ be an orthonormal basis of $X_N$,  define $V_N:X_N\rightarrow X_N$ by $V_N:=P_N V|_{X_N}$, and let $T_N:X_N\rightarrow \C^N$ be defined by
\[ T_N x = [(x,\phi_1) \cdots (x,\phi_N)]^T. \]
Then $T_N$ is an isomorphism, indeed an isometric isomorphism if we give $\C^N$ the standard Euclidean norm $\|\cdot\|_2$.  Further (\ref{eqn:gal}) is equivalent to
\[ V^NT_N x_N = T_N P_N y, \]
where
\[ V^N := T_N V_N T_N^{-1} \]
is the linear operator on $\C^N$ whose matrix representation (that we denote also by $V^N$) is the Galerkin matrix $V^N = [ (V\phi_j,\phi_i) ]$.  Clearly
\begin{equation}
  \|V_N\|=\|V^N\|
  \label{eqn:VN}
\end{equation}
(where we use $\|\cdot\|$ on the right hand side to denote the matrix norm induced by the vector norm $\|\cdot\|_2$), since both $T_N$ and $T_N^{-1}$ are isometries.
Also $V_N$ is invertible if and only if $V^N$ is invertible and, if they are both invertible, then
\[ \|V_N^{-1}\| = \|(V^N)^{-1}\|. \]
Now we need to determine the relationship between $\|V_N\|$ and $\|V\|$.  We first require the following result.
\begin{lemma} \label{lemma:PNWPN}
  If $W$ is a bounded linear operator on $L^2(\Gamma)$ and $P_1, P_2, \ldots$ is a sequence of orthogonal projection operators with $P_N\phi\rightarrow\phi$ for all $\phi\in L^2(\Gamma)$, then
  \[ \|P_N W P_N\| \rightarrow \|W\|, \]
  as $N\rightarrow\infty$.
\end{lemma}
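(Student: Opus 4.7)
The plan is to establish the limit by proving matching upper and lower bounds, sandwiching $\|W\|$ between $\limsup \|P_NWP_N\|$ and $\liminf \|P_NWP_N\|$. The upper bound is essentially free from submultiplicativity of the operator norm.

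First I would observe that since each $P_N$ is an orthogonal projection, $\|P_N\|\leq 1$, so
\[
\|P_N W P_N\| \leq \|P_N\|\,\|W\|\,\|P_N\| \leq \|W\|
\]
for every $N$. This immediately gives $\limsup_{N\to\infty}\|P_N W P_N\| \leq \|W\|$, with no appeal to the pointwise convergence hypothesis.

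Next I would prove the matching lower bound $\liminf_{N\to\infty}\|P_N W P_N\| \geq \|W\|$. Fix $\eps>0$ and, by definition of the operator norm, pick $\phi\in L^2(\Gamma)$ with $\|\phi\|=1$ and $\|W\phi\|\geq \|W\|-\eps$. The key step is to show that $P_N W P_N \phi \to W\phi$ in $L^2(\Gamma)$. Writing
\[
P_N W P_N \phi - W\phi \ =\ P_N W (P_N\phi - \phi) \ +\ (P_N W\phi - W\phi),
\]
the first term is bounded by $\|W\|\,\|P_N\phi-\phi\|\to 0$ by the hypothesis applied to $\phi$, while the second term tends to $0$ by the hypothesis applied to the vector $W\phi\in L^2(\Gamma)$. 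Since $\|\phi\|=1$, we obtain
\[
\|P_N W P_N\| \ \geq\ \|P_N W P_N \phi\| \ \longrightarrow\ \|W\phi\| \ \geq\ \|W\|-\eps.
\]
Hence $\liminf_{N\to\infty}\|P_N W P_N\|\geq \|W\|-\eps$, and letting $\eps\downarrow 0$ concludes the argument.

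I don't anticipate any real obstacle: the only subtlety is recognising that one must apply the pointwise convergence hypothesis not only to the chosen near-maximiser $\phi$ (which takes care of the inner $P_N$) but also to the vector $W\phi$ (to handle the outer $P_N$). The rest is routine use of $\|P_N\|\leq 1$ and a triangle inequality.
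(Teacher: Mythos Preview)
Your proof is correct and follows essentially the same sandwich strategy as the paper: the upper bound $\limsup\|P_NWP_N\|\leq\|W\|$ comes from $\|P_N\|\leq 1$ and submultiplicativity, and the lower bound uses the strong convergence $P_NWP_N\phi\to W\phi$ (via exactly your decomposition) to push $\liminf\|P_NWP_N\|$ above $\|W\|$. The only cosmetic difference is that the paper extracts a subsequence realising the $\liminf$ and then bounds $\|W\phi\|$ for \emph{every} $\phi$ along that subsequence, whereas you fix an $\eps$-near-maximiser $\phi$ and let $\eps\downarrow 0$; your version is arguably more direct since it avoids the subsequence and makes the two applications of the pointwise-convergence hypothesis explicit.
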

\begin{proof}
  Let $\lambda=\liminf_{N\rightarrow\infty}\|P_N W P_N\|$ and choose a monotonic increasing  sequence $N_1, N_2, \ldots$ of natural numbers with $\|P_{N_k} W P_{N_k}\| \rightarrow\lambda$ as $k\rightarrow\infty$.  Then, for every $\phi\in L^2(\Gamma)$,
  \[ \|W\phi\| = \lim_{k\rightarrow\infty}\|P_{N_k} W P_{N_k}\phi\| \leq \lim_{k\rightarrow\infty}\|P_{N_k} W P_{N_k}\|\|\phi\| = \lambda \|\phi\|, \]
  and hence $\|W\| \leq \lambda = \liminf_{N\rightarrow\infty}\|P_N W P_N\|$.  On the other hand, we have $$\limsup_{N\rightarrow\infty}\|P_N W P_N\| \leq \|W\|,$$ since $\|P_N W P_N\| \leq \|P_N\| \|W\| \|P_N\| = \|W\|$ for every $N$, and hence
  \[ \|W\|\leq \liminf_{N\rightarrow\infty}\|P_N W P_N\| \leq \limsup_{N\rightarrow\infty}\|P_N W P_N\| \leq \|W\|. \]
  Thus $\lim_{N\rightarrow\infty}\|P_N W P_N\|$ exists and is equal to $\|W\|$.
\end{proof}


\vspace{1ex}
\noindent Clearly $\|P_N V P_N\|=\|V_N\|$, so that it follows from~(\ref{eqn:VN}) and Lemma~\ref{lemma:PNWPN} that
\begin{equation} \label{eq:conv_norm}
\|V^N\| = \|V_N\|   = \|P_N V P_N\| \rightarrow \|V\|,
\end{equation}
as $N\rightarrow\infty$.

In the case that $V=I+C$ with $C$ compact, it holds moreover that $\|(V^N)^{-1}\|\to \|V^{-1}\|$ as $N\to\infty$ if $V$ is invertible. To see this, note first that, by Lemma~\ref{lemma:PNWPN}, if $V$ is invertible,
\[ \|P_N V^{-1} P_N \|\rightarrow \|V^{-1}\|, \]
as $N\rightarrow\infty$. Next, let $\tilde V_N = I + P_NC$ and note that
\[ V_N = \tilde V_N|_{X_N}, \]
  and that, since $P_N$ converges strongly (i.e.\ pointwise) to the identity and $C$ is compact, $P_N C$ converges in norm to $C$, so that $\|V-\tilde V_N\|\to 0$.  It follows from standard operator convergence results that $\tilde{V}_N$ is invertible for all sufficiently large $N$.
   But then it follows that also $V_N = \tilde V_N|_{X_N}$ is invertible (as an operator on $X_N$). Indeed, injectivity of $V_N$ is clear by injectivity of $\tilde V_N$. To see surjectivity, take $\psi\in X_N$ and note that, by surjectivity of $\tilde V_N$, there exists a $\phi\in L^2(\Gamma)$ with $\psi=\tilde V_N\phi=\phi+P_NC\phi$, so that $\phi=\psi-P_NC\phi\in X_N$ and hence, $V_N\phi=\psi$.
This argument also shows that $V_N^{-1} = \tilde V_N^{-1}|_{X_N}$.
 Further, $\|\tilde{V}_N^{-1} - V^{-1}\|\rightarrow 0$ as $N\rightarrow\infty$, so that
\begin{eqnarray*}
  \|P_N V^{-1} P_N - P_N \tilde{V}_N^{-1} P_N \| & = & \|P_N(V^{-1} - \tilde{V}_N^{-1}) P_N \| \\
    & \leq & \|P_N\| \|V^{-1} - \tilde{V}_N^{-1}\| \|P_N \| \\
    & = & \|V^{-1} - \tilde{V}_N^{-1}\| \rightarrow 0,
\end{eqnarray*}
as $N\rightarrow\infty$.  Hence, as $N\rightarrow\infty$,
\begin{equation}
  \|(V^N)^{-1}\| = \|V_N^{-1}\| = \|P_N\tilde{V}_N^{-1}P_N\| = \|P_NV^{-1}P_N\| + o(1) \rightarrow \|V^{-1}\|.
  \label{eqn:VN1}
\end{equation}

Equation (\ref{eq:conv_norm}) applies to the Galerkin boundary element method discretisation of all the operators we have discussed in the previous sections, in particular to $S_k$, $D_k$, and $A_{k,\eta}$, provided that the sequence of approximation spaces $X_N$ is chosen so that $P_N$ converges pointwise to the identity. It is enough to check that this pointwise convergence holds on some dense subset, for example to check that
\[
\|P_N\phi-\phi\| = \inf_{\phi_N\in X_N} \|\phi_N-\phi\|\to 0 \mbox{ as } N\to\infty, \;\;\mbox{ for every } \phi\in C(\Gamma).
\]
 Equation (\ref{eqn:VN1}) applies to the operator $A_{k,\eta}$ if $\Gamma$ is $C^1$, for then $A_{k,\eta}$ has the form $A_{k,\eta} = I + C_{k,\eta}$ with $C_{k,\eta}$ compact \cite{Fabes78}.  For general Lipschitz $\Gamma$, it is not known whether~(\ref{eqn:VN1}) holds, indeed it is not even known for any Galerkin method that $V^N$ is invertible for all sufficiently large~$N$.

\section{Numerical results} \label{sec:num}
\setcounter{equation}{0}

In this section we compute $\|V^N\|$ for $V=S_k, D_k,
A_{k,\eta}$, and $\|(V^N)^{-1}\|$ for $V=A_{k,\eta}$, each for a
variety of obstacles, and we compare the computed values with the
upper and lower bounds on the corresponding continuous operators as
described in~\S\ref{sec:previous}.  The aim is to provide
supporting evidence for some of the theoretical results described in
\S\ref{sec:previous}, both quantitative and asymptotic, and to give
some indication of which of the upper and lower bounds may be sharper,
particularly when there is a significant gap between them.  We also
seek an indication of the extent to which the bounds on the continuous
operators are satisfied by their discrete counterparts.

 We present results for $\eta=k$ for all geometries under consideration, and we also present results for
$\eta=k^{2/3}$ for certain specific examples.  As we have discussed in \S\ref{sec:cond}, the choice $\eta=k$ is widespread in the literature, e.g.\ \cite{Ami90, Ami93, DoGrSm:07, Gi:97,Kr:85}, and this choice is supported by our own preceding analysis. The interesting choice $\eta=k^{2/3}$, proposed in \cite{BaSa:07}, is also supported by some of the above analysis; for example we have seen in \S\ref{sec:cond} that, for a spherical scatterer, $\cond A_{k,\eta}$ increases at the same rate as $k\to\infty$ whether $\eta$ is proportional to $k$ or proportional to $k^{2/3}$.

Although our main focus is on larger values of $k$, for two examples we also investigate the limit $k\to0$, presenting results for $\eta=k$ and for $\eta$ given by \eqref{etachoice2}.

In each example the boundary $\Gamma$
is piecewise $C^{\infty}$, that is $\Gamma=\bigcup_{j=1}^p \Gamma^{(j)}$ with $\Gamma^{(j)}$ a $C^\infty$
arc. We denote the length of  $\Gamma^{(j)}$ by
$L_j$, and divide each $\Gamma^{(j)}$ into $N_j$ segments $\Gamma_i^{(j)}$, $i=1,\dots,N_j$ of equal
length $|\Gamma_i^{(j)}|=L_j/N_j$. We then
define the orthonormal basis functions by
\[ \tilde{\phi}_i^{(j)}(x) =\left\{\begin{array}{cc} 1/|\Gamma_i^{(j)}|^{1/2},
    & x\in\Gamma_i^{(j)},
    \\ 0, & \mbox{otherwise},
  \end{array}\right. \quad i=1,\ldots,N_j. \]

Now whilst in theory $\|V\| = \lim_{N\rightarrow\infty} \|V^N\|$, in
practice we can only compute $\|V^N\|$ for a finite value of $N$. In
order to justify the assumption that our choice of $N_j$ is
sufficiently large we fix $N_j\propto k$, choosing the constant of
proportionality on the basis of some simple model experiments.  In
particular, for the case that $\Omega$ is a circle the eigenvalues
of $A_{k,\eta}$ are known explicitly, with corresponding formulae
in terms of the eigenvalues for $\|\Ak\|$ and $\|\Ak^{-1}\|$ (see \cite[\S2]{CWGLL07} for details).
Thus for a circle we can compare our computed approximations to
$\|A_{k,\eta}\|$ and $\|A_{k,\eta}^{-1}\|$ with the known values.
For this example we found that ten basis functions per wavelength gives a
relative error of approximately 1\%, and thus in each example we
choose $N_j\approx\frac{10kL_j}{2\pi}$.

We present numerical results for the obstacles shown in
Figure~\ref{fig:shapes1}.  More detailed descriptions of the
obstacles are provided below.
\begin{figure}[ht]
\center
  \includegraphics[width=13cm]{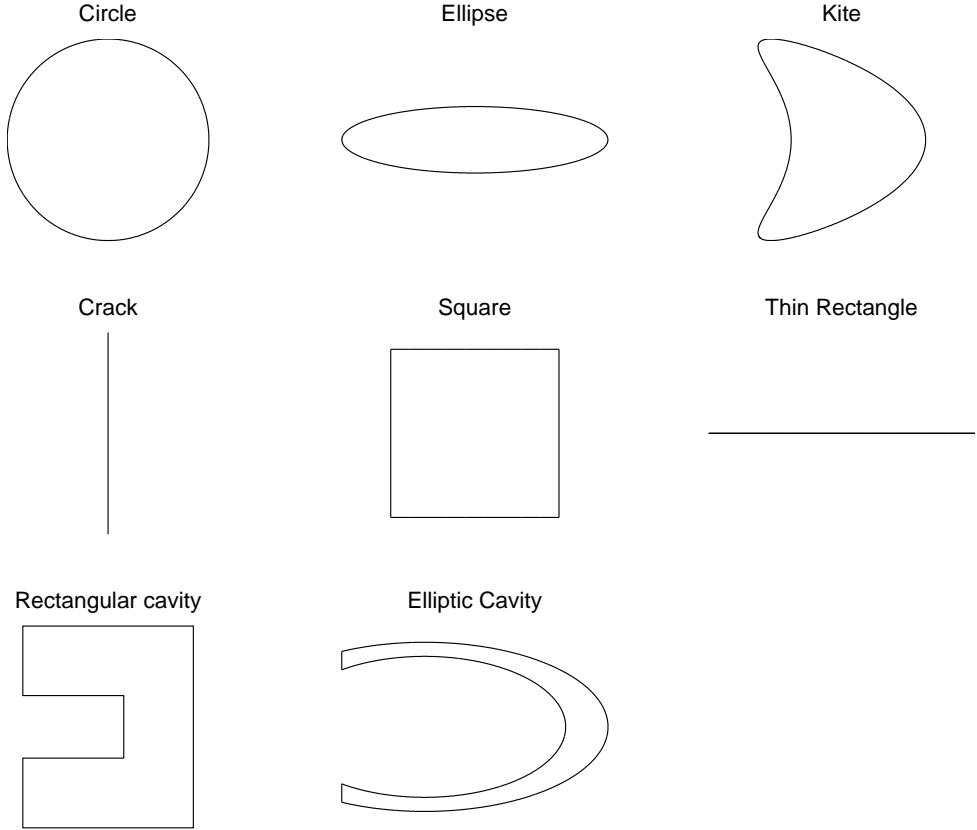}
  \caption{Obstacles corresponding to numerical experiments.}
  \label{fig:shapes1}
\end{figure}
For each obstacle and for each operator $V_k$ we also compute the algebraic growth rate $p$
under the assumption that $\|V_k\|=C k^p$, for some constant $C>0$.  Assuming this formula holds, we can estimate the value of $p$ from two successive values $\|V_{k_j}\|$ and $\|V_{k_{j+1}}\|$ by
\begin{equation}
p=\frac{\log\left(\frac{\|V_{k_{j+1}}\|}{\|V_{k_j}\|}\right)}{\log\left(\frac{k_{j+1}}{k_j}\right)}.
  \label{eqn:EOC}
\end{equation}
In the cases where $p\approx 0$, $p$ is not shown in the tables.  In all of the estimates detailed below, $C$ and $C_j$, $j=1,2,\ldots$, denote unspecified constants independent of $k$ and $\eta$.

\subsection{Circle}
For our first example, we consider the unit circle.  From~(\ref{eq:Sklower}) and~\eqref{circ:SDnorm} we know that, for $k\geq 1$
\[ (32\pi)^{-1/3} k^{-2/3}(1+o(1)) \leq \|S_k\| \leq Ck^{-2/3}, \]
where here and throughout this section $o(1)$ denotes a term which vanishes in the limit as $k\to\infty$.
Theorem~\ref{th:7}, \eqref{circ:SDnorm} and (\ref{eq:Dk2Dupper}) imply that, for $k> 0$,
\[ C_1\leq \|D_k\| \leq C_2k^{1/2} + C_3, \] whilst we know that the sharper  upper bound (\ref{eq:DSboundsSphere}) holds in the case of a sphere, that $\|D_k\| \leq C$.  The numerical results in Table~\ref{table:circle1} for the corresponding boundary element matrices suggest that this sharper result, proved for a sphere, appears to be applicable for a circle as well; we observe for the discrete approximations that $\|S_k\|\sim k^{-2/3}$ and $\|D_k\|\sim k^0$ ($\sim$ in this section indicates that the ratio of the left hand side to the right hand side is approximately constant in the limit $k\to\infty$).  The quantitative lower bound on $\|S_k\|$ from (\ref{eq:Sklower}) is clearly a lower bound in Table~\ref{table:circle1}, underestimating the true norm by a factor of about 6.5.
\begin{table}[htbp]
\begin{center}
  \begin{tabular}{|r|rrr|r|}
  \hline
    $k$ & $(32\pi)^{-1/3} k^{-2/3}$ & $\|S_k\|$ & $p$ & $\|D_k\|$  \\
  \hline
    5 &7.355$\times10^{-2}$ & 5.240$\times10^{-1}$ &       & 1.144  \\
   10 &4.633$\times10^{-2}$ & 3.152$\times10^{-1}$ & -0.73 & 1.114  \\
   20 &2.919$\times10^{-2}$ & 1.997$\times10^{-1}$ & -0.66 & 1.084  \\
   40 &1.839$\times10^{-2}$ & 1.246$\times10^{-1}$ & -0.68 & 1.079  \\
   80 &1.158$\times10^{-2}$ & 7.798$\times10^{-2}$ & -0.68 & 1.076  \\
  160 &7.297$\times10^{-3}$ & 4.884$\times10^{-2}$ & -0.68 & 1.075  \\
  320 &4.597$\times10^{-3}$ & 3.076$\times10^{-2}$ & -0.67 & 1.072  \\
  640 &2.896$\times10^{-3}$ & 1.935$\times10^{-2}$ & -0.67 & 1.071   \\
  \hline
  \end{tabular}
  \caption{{\bf Circle.} Norms of Galerkin BEM approximations to $S_k$ and $D_k$, and $p$ values given by \eqref{eqn:EOC}.}
  \label{table:circle1}
  \end{center}
\end{table}

From Lemma~\ref{lem}, Corollary~\ref{cor45}, (\ref{eq:DoGrSmnorm}) and (\ref{Marko_fb}) we know that, for $k\geq 1$,
\begin{eqnarray*}
   \left(\frac{k}{32\pi}\right)^{1/3}(1+o(1)) &\leq \|A_{k,k}\| \leq& C_1k^{1/3}, \\
   1 &\leq \|A_{k,k^{2/3}}\| \leq& C_2k^{1/2}.
\end{eqnarray*}
Note that Corollary~\ref{cor45} gives the lower bound $(32\pi)^{-1/3} - 1/(2\sqrt{2}) \approx -0.139$ on $\|A_{k,k^{2/3}}\|$, as $k\to\infty$, which is clearly less sharp than the lower bound on $\|A_{k,k^{2/3}}\|$ from Lemma~\ref{lem}.  The upper bound on $\|A_{k,k^{2/3}}\|$ for the case of a sphere is, from~(\ref{eq:normAsphere}), $\|A_{k,k^{2/3}}\| \leq C_3$.  The numerical results in Table~\ref{table:circle2} suggest that this sharper result also holds for a circle; the results suggest $\|A_{k,k^{2/3}}\|\sim k^0$, and that  $\|A_{k,k}\|\sim k^{1/3}$ as expected. The quantitative lower bound on $\|A_{k,k}\|$ from Corollary~\ref{cor45} is a lower bound in Table~\ref{table:circle2}, underestimating the true norm by a factor of about 7.
\begin{table}[htbp]
\begin{center}
  \begin{tabular}{|r|crr|c|c|ccr|}
  \hline
    $k$ & $\left(\frac{k}{32\pi}\right)^{1/3}$ & $\small\|A_{k,k}\|$ & $\small p$ & $\small\|A_{k,k}^{-1}\|$ &  $\small \|A_{k,k^{2/3}}\|$ &
    $\small\|A_{k,k^{2/3}}^{-1}\|$ & $\small p$ & $\small B_{0,k^{2/3}}$ \\
  \hline
    5 & 0.37 & 2.663 &      & 0.986 & 2.016 &  0.995 &     & 3.82 \\
   10 & 0.46 & 3.233 & 0.28 & 0.987 & 1.993 &  1.056 &0.09 & 4.49 \\
   20 & 0.58 & 4.021 & 0.32 & 0.987 & 1.981 &  1.260 &0.26 & 5.38 \\
   40 & 0.74 & 5.030 & 0.32 & 0.987 & 2.000 &  1.701 &0.43 & 6.56 \\
   80 & 0.93 & 6.271 & 0.32 & 0.987 & 1.999 &  2.039 &0.26 & 8.06 \\
  160 & 1.17 & 7.859 & 0.33 & 0.987 & 1.990 &  2.694 &0.40 & 9.98 \\
  320 & 1.47 & 9.883 & 0.33 & 0.987 & 1.998 &  3.407 &0.34 & 12.40\\
  640 & 1.85 & 12.419 & 0.33 & 0.987 & 2.000 &  4.307&0.34 & 15.49 \\
  \hline
  \end{tabular}
  \caption{{\bf Circle.} Galerkin BEM approximations to $\|A_{k,\eta}\|$ and  $\|A_{k,\eta}^{-1}\|$.}
  \label{table:circle2}
\end{center}
\end{table}

By Lemma~\ref{lem}, $\|A_{k,k}^{-1}\|\geq 1$, which combined with \eqref{eq:DoGrSmnorminv2} implies that $\|A_{k,k}^{-1}\|=1$ for all  $k$ sufficiently large, and the numerical results in Table~\ref{table:circle2} show this behaviour. The bound for general starlike obstacles applied to the circle, i.e.\ (\ref{eqn:Bcircle}), gives that
\[ \|A_{k,\eta}^{-1}\| \leq \frac12 + \left[1 + \frac{k^2}{\eta^2} + \frac{(1+2k)^2}{2\eta^2}\right]^{1/2} =: B_{0,\eta}. \]
Note that $B_{0,k}\rightarrow2.5$ (in fact it holds that $2.5\leq B_{0,k}\leq 2.6$ for the range of $k$ in Table \ref{table:circle2}), and that $B_{0,k^{2/3}}\sim \sqrt{3}\, k^{1/3}$ as $k\to\infty$. We see from Table~\ref{table:circle2} that $B_{0,\eta}$ appears to be  an upper bound for the discretisation of $\|A_{k,\eta}^{-1}\|$ as predicted, overestimatimating by a factor of about 2.5 for the larger values of $k$ when $\eta=k$, by a factor of about 3.6 when $\eta=k^{2/3}$.

We note from Table \ref{table:circle2} that, for this example, the condition number $\cond A_{k,\eta} = \|A_{k,\eta}\|\, \|A_{k,\eta}^{-1}\|$ appears to be slightly numerically smaller for $\eta=k^{2/3}$ than for $\eta=k$. It appears that, for both choices of $\eta$, $\cond A_{k,\eta}$ increases approximately in proportion to $k^{1/3}$, though this is less clear in the case $\eta = k^{2/3}$.

\subsection{Ellipse}
Next we consider the ellipse given by $(2\cos t,\frac12\sin t)$, $t\in[0,2\pi]$.
The more specific results of \S\ref{sec:circ} do not apply in this case, and for upper bounds on $\|S_k\|$ and $\|D_k\|$ we have only the results for general Lipschitz $\Gamma$ of \S\ref{sec:upper}.
The inequalities (\ref{eq:Sklower}) and (\ref{eq:Sk2Dupper}) imply that, for $k\geq 1$,
\[ (4\pi)^{-1/3}k^{-2/3}(1+o(1)) \leq \|S_k\| \leq Ck^{-1/2},  \]
the lower bound larger than for the case of the circle as the maximum radius of curvature ($R=8$) is larger.
Theorem~\ref{th:7} and (\ref{eq:Dk2Dupper}) with $N=0$ imply that, for $k> 0$,
\[ C_1\leq \|D_k\| \leq C_2k^{1/2} + C_3. \]
Inspecting the numerical results in Table \ref{table:ellipse1}, we see that the quantitative lower bound on $\|S_k\|$ from (\ref{eq:Sklower}) is clearly a lower bound for the norm of the discretised operator, underestimating the true norm by a factor of about 6 at the highest wavenumbers (cf.\ the results for the circle).
 The numerical results for $\|D_k\|$ suggest that $\|D_k\|\sim k^0$, i.e.\ that the lower bound on $\|D_k\|$ is sharp, while it appears from the numerical results that $\|S_k\|\sim k^p$, for $p\approx -0.6$.
\begin{table}[htbp]
\begin{center}
  \begin{tabular}{|r|rrr|rr|}
  \hline
    $k$ & $(4\pi)^{-1/3}k^{-2/3}$ & $\|S_k\|$ & $p$ & $\|D_k\|$ & p\\
  \hline
    5 & 1.471$\times10^{-1}$ & 6.692$\times10^{-1}$ &       & 1.458 &  \\
   10 & 9.267$\times10^{-2}$ & 4.143$\times10^{-1}$ & -0.69 & 1.591& 0.13  \\
   20 & 5.838$\times10^{-2}$ & 2.730$\times10^{-1}$ & -0.60 & 1.671  & 0.07 \\
   40 & 3.678$\times10^{-2}$ & 1.803$\times10^{-1}$ & -0.60 & 1.760 & 0.08 \\
   80 & 2.317$\times10^{-2}$ & 1.209$\times10^{-1}$ & -0.58 & 1.819 & 0.05  \\
  160 & 1.459$\times10^{-2}$ & 8.029$\times10^{-2}$ & -0.59 & 1.877 & 0.05 \\
  320 & 9.194$\times10^{-3}$ & 5.269$\times10^{-2}$ & -0.61 & 1.919  & 0.03 \\
  640 & 5.792$\times10^{-3}$ & 3.427$\times10^{-2}$ & -0.62 & 1.942 & 0.02\\
  \hline
  \end{tabular}
  \caption{{\bf Ellipse.} Galerkin BEM approximations to  $\|S_k\|$ and $\|D_k\|$.}
  \label{table:ellipse1}
  \end{center}
\end{table}

Now turning to Table \ref{table:ellipse2}, note that   Lemma~\ref{lem}, Corollary~\ref{cor45} and (\ref{Marko_fb}) imply that, for $k\geq 1$,
\begin{eqnarray}
   \left(\frac{k}{4\pi}\right)^{1/3}(1+o(1)) &\leq \|A_{k,k}\| \leq& C_1k^{1/2}, \nonumber \\ 
   1 &\leq \|A_{k,k^{2/3}}\| \leq& C_2k^{1/2}\label{eq:ellipseeq2}.
\end{eqnarray}
(Note that Corollary~\ref{cor45} gives the lower bound $(4\pi)^{-1/3} - 1/(2\sqrt{2}) \approx 0.077$ on $\|A_{k,k^{2/3}}\|$, as $k\to\infty$. So in \eqref{eq:ellipseeq2} we have used the sharper estimate from Lemma~\ref{lem}.)  The numerical results in Table~\ref{table:ellipse2} suggest that  $\|A_{k,k}\|\sim k^{p}$ for $p\approx 0.4$, that $\|A_{k,k^{2/3}}\|\sim k^{0}$, and  that the lower bound in \eqref{eq:ellipseeq2} is an underestimate by a factor approximately 2.5.
\begin{table}[htbp]
\begin{center}
  \begin{tabular}{|r|rrr|r|r|rr|}
  \hline
    $k$ & $\left(\frac{k}{4\pi}\right)^{1/3}$ & $\|A_{k,k}\|$ & $p$ & $\|A_{k,k}^{-1}\|$ & $\|A_{k,k^{2/3}}\|$ & $\|A_{k,k^{2/3}}^{-1}\|$ & $p$ \\
  \hline
   5 & 0.736 &  3.507  &      & 0.987 &  2.417 &  0.996 &       \\
  10 & 0.927 &  4.267  & 0.28 & 0.987 &  2.473 &  1.024 &  0.04    \\
  20 & 1.168 &  5.589  & 0.39 & 0.987 &  2.554 &  1.300 &  0.34 \\
  40 & 1.471 &  7.317  & 0.39 & 0.987 &  2.599 &  1.662 &  0.35 \\
  80 & 1.853 &  9.751  &  0.41 & 0.987 & 2.580 &  1.986 &  0.26 \\
 160 & 2.335 & 12.902  & 0.40 & 0.987 &  2.582 &  2.548 &  0.36 \\
 320 & 2.942 & 16.906  & 0.39 & 0.987 &  2.634 &  3.387 &  0.41 \\
 640 & 3.707 & 21.972  & 0.38 & 0.987 &  2.689 &  4.287 &  0.34 \\
  \hline
  \end{tabular}
  \caption{{\bf Ellipse.} Galerkin BEM approximations to $\|A_{k,\eta}\|$ and  $\|A_{k,\eta}^{-1}\|$.}
  \label{table:ellipse2}
\end{center}
\end{table}

Lemma~\ref{lem} and~(\ref{eqn:normsequal2}) imply that, for $k\geq 1$,
\begin{eqnarray} \label{si}
  1 \leq \|A_{k,k}^{-1}\| \leq C_1, \quad
  1 \leq \|A_{k,k^{2/3}}^{-1}\| \leq C_2k^{1/3}.
\end{eqnarray}
The numerical results in Table~\ref{table:ellipse2} suggest that $\|A_{k,k}^{-1}\|\approx 1$ for $k\geq 5$.
The values of $p$ corresponding to  $\|A_{k,k^{2/3}}^{-1}\|$ are rather variable, but the average of the last six values for $p$ is $0.34$, approximately consistent with the upper bound $C_2 k^{1/3}$.

As for the case of the circle the condition number $\cond A_{k,\eta} = \|A_{k,\eta}\|\, \|A_{k,\eta}^{-1}\|$ appears to be numerically smaller for $\eta=k^{2/3}$ than for $\eta=k$, by nearly a factor 2 for the higher values of $k$ in Table \ref{table:ellipse2}.


\subsection{Kite} \label{sec:kite}
\begin{figure}[ht]
\center
  \includegraphics[width=8cm]{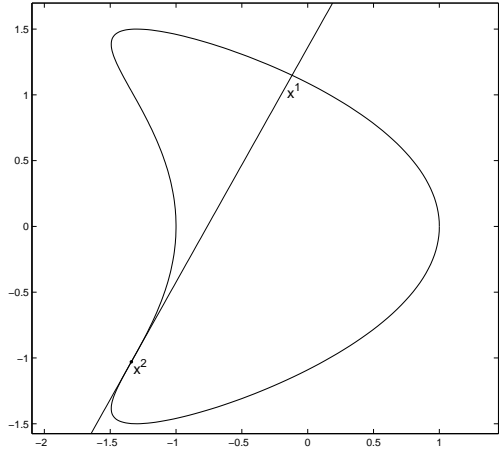}
  \caption{The `kite' shape and notation of \S\ref{sec:kite}. }
  \label{fig:kite}
\end{figure}
Estimates for $\|S_k\|$, $\|D_k\|$, $\|A_{k,k}\|$,
$\|A_{k,k}^{-1}\|$, $\|A_{k,k^{2/3}}\|$ and $\|A_{k,k^{2/3}}^{-1}\|$
for the `kite' shape given by $(\cos t+0.65\cos 2t -0.65, 1.5\sin
t)$, $t\in[0,2\pi]$, are shown in Tables~\ref{table:kite1}
and~\ref{table:kite2}, for various $k$.  Our upper bounds from \S\ref{sec:previous} that apply in this case are identical in their dependence on $k$ to those for the ellipse. However, in contrast to the circle and the ellipse, our lower bounds imply significant growth in $\|D_k\|$ and $\|A_{k,k^{2/3}}\|$ as $k$ increases.   Specifically, applying Theorem~\ref{th:5} to the non-convex kite, with $N=2$ and $x^*$ the point of inflection on $\Gamma$ labelled $x^2$ in Figure \ref{fig:kite}, and recalling~(\ref{eq:Sk2Dupper}) and (\ref{Marko_fb}), it follows that, for $k\geq 1$,
\begin{eqnarray*}
  C_1 k^{-3/5} & \leq \|S_k\| \leq & C_2 k^{-1/2}, \\
  C_1 k^{2/5}  & \leq \|A_{k,k}\| \leq & C_2 k^{1/2}, \\
  C_1 k^{1/15} & \leq \|A_{k,k^{2/3}}\| \leq & C_2 k^{1/2}.
\end{eqnarray*}
Moreover, applying Theorem~\ref{th:7}  to the non-convex kite, with $N=2$, $x^2$ one of the points of inflection on $\Gamma$, and $x^1$ the other point on $\Gamma$ intersected by the tangent at $x^2$ (see Figure \ref{fig:kite}),  and recalling~(\ref{eq:Dk2Dupper}), we have, for $k\geq 1$,
\[ C_1 k^{1/6} \leq \|D_k\| \leq C_2 k^{1/2}. \]
The numerical results in Tables~\ref{table:kite1} and~\ref{table:kite2} provide some support for these estimates.  The lower bound on $\|S_k\|$ seems sharper than the upper bound, although in fact the behaviour of $\|S_k\|$ appears to be rather similar to that for the ellipse.  On the other hand, the behaviour of  $\|D_k\|$ and $\|A_{k,k^{2/3}}\|$ is very different from that seen for the ellipse, with, approximately, $\|D_k\|\sim k^{1/4}$ and $\|A_{k,k^{2/3}}\|\sim k^{1/6}$.

Since the kite shape is starlike, satisfying the assumptions of \S\ref{sec:starlike}, the same bounds \eqref{si} hold on $\|A_{k,k}^{-1}\|$ and $\|A_{k,k^{2/3}}^{-1}\|$ as for the case of the ellipse. We observe similar behaviour to that of the ellipse, namely that $\|A_{k,k}^{-1}\|\approx 1$ and that the bound $\|A_{k,k^{2/3}}^{-1}\|\leq C_2 k^{1/3}$ appears sharp. In Table \ref{table:kite2} $\cond A_{k,k^{2/3}}\sim k^p$, with $p\approx 0.47$, a faster rate of growth than $\cond A_{k,k}\sim k^{1/3}$, so that, for the larger values of $k$, $\cond A_{k,k} < \cond A_{k,k^{2/3}}$.
\begin{table}[htbp]
\begin{center}
  \begin{tabular}{|r|rr|rr|}
  \hline
    $k$ & $\|S_k\|$ & $p$ & $\|D_k\|$ & $p$ \\
  \hline
    5 & 6.591$\times10^{-1}$ &       &  1.810 &        \\
   10 & 4.365$\times10^{-1}$ & -0.59 &  2.169 &   0.27 \\
   20 & 2.758$\times10^{-1}$ & -0.63 &  2.686 &   0.29 \\
   40 & 1.712$\times10^{-1}$ & -0.65 &  3.160 &   0.27 \\
   80 & 1.074$\times10^{-1}$ & -0.65 &  3.616 &   0.25 \\
  160 & 6.759$\times10^{-2}$ & -0.66 &  4.160 &   0.24 \\
  320 & 4.400$\times10^{-2}$ & -0.65 &  4.760 &   0.24 \\
  640 & 2.866$\times10^{-2}$ & -0.65 &  5.437 &   0.23 \\
  \hline
  \end{tabular}
  \caption{{\bf Kite.} Galerkin BEM approximations to  $\|S_k\|$ and $\|D_k\|$.}
  \label{table:kite1}
  \end{center}
\end{table}

\begin{table}[htbp]
\begin{center}
  \begin{tabular}{|r|rr|r|rr|rr|}
  \hline
    $k$ & $\|A_{k,k}\|$ & $p$ & $\|A_{k,k}^{-1}\|$ & $\|A_{k,k^{2/3}}\|$ & $p$ & $\|A_{k,k^{2/3}}^{-1}\|$ & $p$ \\
  \hline
    5 &  3.720 &      & 0.987 &    2.764  &       &   1.029 &       \\
   10 &  4.766 & 0.36 & 0.987 &    3.243  &  0.23 &   1.027 & 0 \\
   20 &  6.151 & 0.37 & 0.987 &    3.602  &  0.15 &   1.177 &  0.20 \\
   40 &  7.513 & 0.29 & 0.987 &    4.041  &  0.17 &   1.511 &  0.36 \\
   80 &  9.316 &  0.31 & 0.987 &    4.394  &  0.12 &   1.883 &  0.32 \\
  160 & 11.563 & 0.31 & 0.987 &    4.891  &  0.16 &   2.354 &  0.32 \\
  320 & 14.337 & 0.31 & 0.987 &    5.414  &  0.15 &   2.954 &  0.33 \\
  640 & 18.387 & 0.36 & 0.987 &    6.030  &  0.16 &   3.671 &  0.31 \\
  \hline
  \end{tabular}
  \caption{{\bf Kite.} Galerkin BEM approximations to $\|A_{k,k}\|$, $\|A_{k,k}^{-1}\|$, $\|A_{k,k^{2/3}}\|$ and $\|A_{k,k^{2/3}}^{-1}\|$.}
  \label{table:kite2}
\end{center}
\end{table}

\subsection{Crack} This numerical  example is distinct from the others in that $\Gamma$ is an open arc, the straight line from $(0,0)$ to $(0,1)$, and the only theory which applies from \S\ref{sec:previous} are the upper and lower bounds for $\|S_k\|$.
Computations of the Galerkin BEM approximations to $\|S_k\|$ are shown in Table~\ref{table:crack1} for various
$k$.  From~(\ref{eqn:Skcrack}) and Theorem~\ref{th:3} it follows that
\begin{equation}
\label{eq:crack1}
\frac{1}{\sqrt{\pi k}} + O(k^{-1}) \leq \|S_k\| \leq \frac{2}{\sqrt{\pi k}}.
\end{equation}
The results in Table~\ref{table:crack1} clearly demonstrate that $\|S_k\|\sim k^{-1/2}$ (cf.\ \cite{WaChew04}), a slower rate of decay
than for the circle, ellipse or kite, and that the values of $\|S_k\|$ are bracketed between the quantitative upper and lower bounds in \eqref{eq:crack1}, nearly coinciding with the lower bound values.
\begin{table}[htbp]
\begin{center}
  \begin{tabular}{|r|rr|rr|}
  \hline
    $k$ & $\|S_k\|$ & $p$ & $\sqrt{1/(\pi k)}$ & $2 \sqrt{1/(\pi k)}$\\
  \hline
    5 & 2.649$\times10^{-1}$ &       & 2.523$\times10^{-1}$ & 5.046$\times10^{-1}$ \\
   10 & 1.817$\times10^{-1}$ & -0.54 & 1.784$\times10^{-1}$ & 3.568$\times10^{-1}$  \\
   20 & 1.266$\times10^{-1}$ & -0.52 & 1.262$\times10^{-1}$ & 2.523$\times10^{-1}$  \\
   40 & 8.960$\times10^{-2}$ & -0.50 & 8.921$\times10^{-2}$ & 1.784$\times10^{-1}$  \\
   80 & 6.326$\times10^{-2}$ & -0.52 & 6.308$\times10^{-2}$ & 1.262$\times10^{-1}$  \\
  160 & 4.472$\times10^{-2}$ & -0.50 & 4.460$\times10^{-2}$ & 8.921$\times10^{-2}$  \\
  320 & 3.162$\times10^{-2}$ & -0.50 & 3.154$\times10^{-2}$ & 6.308$\times10^{-2}$  \\
  640 & 2.236$\times10^{-2}$ & -0.50 & 2.230$\times10^{-2}$ & 4.460$\times10^{-2}$  \\
  \hline
  \end{tabular}
  \caption{{\bf Crack.} Galerkin BEM approximations for $\|S_k\|$, and theoretical lower and upper bounds.}
  \label{table:crack1}
  \end{center}
\end{table}

\subsection{Square}
Computed estimates for $\|S_k\|$, $\|D_k\|$, $\|A_{k,k}\|$,
$\|A_{k,k}^{-1}\|$, $\|A_{k,k^{2/3}}\|$ and $\|A_{k,k^{2/3}}^{-1}\|$
for the square of side length two are shown in
Tables~\ref{table:square1} and~\ref{table:square2} below, for
various $k$.  The theoretical upper bounds from \S\ref{sec:previous} that apply are identical to those for the ellipse and kite examples above. In particular, since the square is starlike satisfying the conditions of \S\ref{sec:starlike}, the bounds \eqref{si} apply.  However, as $\Gamma$ now contains a straight line segment we have different lower bounds on $\|S_k\|$, $\|A_{k,\eta}\|$ and $\|D_k\|$ compared to the ellipse and the kite.  Specifically, applying Theorem~\ref{th:3} and recalling~(\ref{eq:Sk2Dupper}) and (\ref{Marko_fb}), it follows that, for $k\geq 1$,
\begin{eqnarray}
  \sqrt{\frac{2}{\pi k}} + O(k^{-1}) & \leq \|S_k\| \leq & C k^{-1/2}\label{eq:square1}, \\
  \sqrt{\frac{2k}{\pi}} -1 + O(1) & \leq \|A_{k,k}\| \leq & C k^{1/2}\label{eq:square2}, \\
  \sqrt{\frac{2}{\pi}}k^{1/6} -1 + O(k^{-1/3}) & \leq \|A_{k,k^{2/3}}\| \leq & C k^{1/2}\label{eq:square3}.
\end{eqnarray}
 Applying Theorem~\ref{th:6} and recalling~(\ref{eq:Dk2Dupper}), we also have
\begin{equation}
\label{eq:square4}
C_1 k^{1/4} \leq \|D_k\| \leq C_2 k^{1/2}.
\end{equation}
It appears from Table~\ref{table:square1} that $\|S_k\|\sim k^{-1/2}$, as expected, and the quantitative lower bound in \eqref{eq:square1} appears to be sharp, underestimating $\|S_k\|$ by only $3\%$ at the highest frequency.
It also seems that $\|D_k\|\sim k^{1/4}$, indicating that the lower bound in \eqref{eq:square4} is sharp in its dependence on $k$.
\begin{table}[htbp]
\begin{center}
  \begin{tabular}{|r|rrr|rr|}
  \hline
    $k$ & $\sqrt{\frac{2}{\pi k}}$ & $\|S_k\|$ & $p$ & $\|D_k\|$ & $p$ \\
  \hline
    5 & 3.568$\times10^{-1}$ & 5.784$\times10^{-1}$ &       & 1.316 &      \\
   10 & 2.523$\times10^{-1}$ & 3.353$\times10^{-1}$ & -0.79 & 1.488 & 0.18 \\
   20 & 1.784$\times10^{-1}$ & 2.137$\times10^{-1}$ & -0.65 & 1.730 & 0.22 \\
   40 & 1.262$\times10^{-1}$ & 1.428$\times10^{-1}$ & -0.58 & 2.018 & 0.22 \\
   80 & 8.921$\times10^{-2}$ & 9.760$\times10^{-2}$ & -0.55 & 2.389 & 0.24 \\
  160 & 6.308$\times10^{-2}$ & 6.723$\times10^{-2}$ & -0.54 & 2.825 & 0.24 \\
  320 & 4.460$\times10^{-2}$ & 4.667$\times10^{-2}$ & -0.53 & 3.346 & 0.25 \\
  640 & 3.154$\times10^{-2}$ & 3.259$\times10^{-2}$ & -0.52 & 3.972 & 0.25 \\
  \hline
  \end{tabular}
  \caption{{\bf Square.} Galerkin BEM approximations for  $\|S_k\|$ and $\|D_k\|$.}
  \label{table:square1}
  \end{center}
\end{table}

The results in Table~\ref{table:square2} suggest that $\|A_{k,k}\|\sim k^{p}$, with $p\approx 1/2$, as expected from \eqref{eq:square2}. It appears that
$\|A_{k,k^{2/3}}\|$ is increasing roughly like $k^{1/5}$.  The quantitative lower bound in \eqref{eq:square3} is seen to be a lower bound for the Galerkin BEM discretisation of $\|A_{k,k^{2/3}}\|$ in Table \ref{table:square2}, underestimating $\|A_{k,k^{2/3}}\|$ by about a factor 3.5 at the highest frequency.  As in the cases of the circle, ellipse, and kite, $\|A^{-1}_{k,k}\|\approx 1$ for all $k$, while $\|A_{k,k^{2/3}}^{-1}\|$ is
increasing as  $k$ increases, though the rate of increase is somewhat erratic (from~(\ref{eqn:normsequal2}), we recall that $\|A_{k,k^{2/3}}^{-1}\|\leq Ck^{1/3}$). However, it appears that $\cond A_{k,k^{2/3}}\sim k^p$, with $p\approx 0.6$, a faster rate of growth than $\cond A_{k,k}\sim k^{1/2}$, and, for the largest value of $k$, $\cond A_{k,k} \approx \cond A_{k,k^{2/3}}$.

Where $R_0>0$ is some length scale of the scatterer, it follows from Theorem \ref{thmcond1} that choosing
$$
\eta = \eta^*:=1/(R_0(1-\ln(kR_0)),
$$
ensures $\|A_{k,\eta}\|$ and $\|A_{k,\eta}^{-1}\|$ remain bounded as $k\to0$. This is true for any Lipschitz $\Gamma$ and (rather arbitrarily) we choose this example to illustrate this numerically. Define $R_0$ as in \S\ref{sec:starlike}, so $R_0=\sqrt{2}$ for this particular scatterer (taking the origin at the centre of the square).
With this choice of $R_0$, we show in Table \ref{table:square3} norm computations for small values of $k$. We see that, while $\|A_{k,k}^{-1}\|$ seems to blow up for small values of $k$, the values of $\|A_{k,\eta^*}^{-1}\|$ remain essentially constant, and $\cond A_{k,\eta^*}$ appears to be approaching a limit of about 3.7 as $k\to0$. For the computations in Table \ref{table:square3}, deviating from the element sizes used in the other calculations, we discretised each boundary line of the square with $500$ equal length elements so as to be able to resolve the blow up in the norm of the inverse of $A_{k,k}$ with some accuracy.

\begin{table}[htbp]
\begin{center}
  \begin{tabular}{|r|rr|r|rrr|rr|}
  \hline
    $k$ & $\|A_{k,k}\|$ & $p$ & $\|A_{k,k}^{-1}\|$ & $\sqrt{\frac{2}{\pi}}k^{1/6} -1$ & $\|A_{k,k^{2/3}}\|$ & $p$ & $\|A_{k,k^{2/3}}^{-1}\|$ & $p$ \\
  \hline
    5 &  3.089 &        &   1.024 & 0.043  & 2.237 &        &   1.116 &        \\
   10 &  3.611 &   0.23 &   1.024 & 0.171  & 2.375 &   0.09 &   1.172 &   0.07 \\
   20 &  4.608 &   0.35 &   1.023 & 0.315  & 2.536 &   0.10 &   1.280 &   0.13 \\
   40 &  6.032 &   0.39 &   1.023 & 0.476  & 2.865 &   0.18 &   1.484 &   0.21 \\
   80 &  8.117 &   0.43 &   1.023 & 0.656  & 3.187 &   0.15 &   1.904 &   0.36 \\
  160 & 11.068 &   0.45 &   1.023 & 0.859  & 3.600 &   0.18 &   2.315 &   0.28 \\
  320 & 15.253 &   0.46 &   1.023 & 1.087  & 4.131 &   0.20 &   3.065 &   0.41 \\
  640 & 21.177 &   0.47 &   1.023 & 1.342  & 4.777 &   0.21 &   4.064 &   0.41 \\
  \hline
  \end{tabular}
  \caption{{\bf Square.} Galerkin BEM approximations for  $\|A_{k,k}\|$, $\|A_{k,k}^{-1}\|$, $\|A_{k,k^{2/3}}\|$ and $\|A_{k,k^{2/3}}^{-1}\|$.}
  \label{table:square2}
\end{center}
\end{table}

\begin{table}[htbp]
\begin{center}
  \begin{tabular}{|r|r|rr|r|r|}
  \hline
    $k$ & $\|A_{k,k}\|$ & $\|A_{k,k}^{-1}\|$ & $p$  & $\|A_{k,\eta^*}\|$ & $\|A_{k,\eta^*}^{-1}\|$ \\
  \hline
    $10^{-5}$ & 1.608 & 3684.85 &   & 1.738 & 2.106 \\
    $10^{-4}$& 1.608 & 465.66 &   -0.89 & 1.723 & 2.105 \\
    $10^{-3}$& 1.608 & 61.63 &       -0.88  &  1.703 & 2.104 \\
    $10^{-2}$& 1.608 & 9.04 &       -0.83 &   1.680 & 2.100 \\
    $10^{-1}$& 1.612 & 2.11 &       -0.63 &   1.693 & 2.081 \\
    $1$ &           2.391 & 1.88 &    -0.05   &    2.534 & 1.871 \\
   \hline
  \end{tabular}
  \caption{{\bf Square.} Galerkin BEM approximations for $\|A_{k,k}\|$, $\|A_{k,k}^{-1}\|$, $\|A_{k,\eta^*}\|$ , and $\|A_{k,\eta^*}^{-1}\|$.}

  \label{table:square3}
\end{center}
\end{table}

\subsection{Thin rectangle} The bound \eqref{eqn:normsequal2} suggests that, even if the obstacle $\Omega$ is starlike, $\|A_{k,\eta}^{-1}\|$ may blow up as the obstacle becomes very thin when $\delta_+/\delta_-$ is small. In this example we investigate the extent to which such a blow up happens. Where $\delta$ is the thickness of the scatterer, it is reasonable to expect this to be a problem when $k \delta\ll 1$, for if $\kappa(x,y)$, $x,y\in\Gamma$, denotes the kernel of the integral operator $D_k-\ri\eta S_k$ and $x_\pm\in\Gamma$ are adjacent points on opposite sides of a thin part of $\Gamma$, then $\kappa(x_+,y)\approx \kappa(x_-,y)$, $y\in \Gamma$, so that the integral operator should be badly conditioned. To explore the extent to which this is a problem, and the extent to which the bound \eqref{eqn:normsequal2} reflects actual behaviour in this limit, we show
estimates for
$\|A_{k,k}\|$ and $\|A_{k,k}^{-1}\|$ for a rectangle with side lengths 2 and
0.02 in Table~\ref{table:recteps002_3} below, for various small values of $k$.

Using the notation of~\S\ref{sec:starlike}, for this example we have $\delta^*=\delta^+=2$, $\delta_-=0.02$, and $R_0=\sqrt{4.0004}$, and (\ref{eqn:normsequal2}) tells us that
\[ \|A_{k,\eta}^{-1}\| \leq B, \]
where $B$ is as defined in \S\ref{sec:starlike}. However, we see from Table \ref{table:recteps002_3} that this bound is a gross overestimate, at least provided we choose $\eta$ carefully. For the definition of $B$ implies that, whatever the choice of $\eta\in \R\setminus\{0\}$,
$$
B > \frac{1}{2}+\left[\left(\frac{\delta_+}{\delta_-}+\frac{4{\delta^*}^2}{\delta_-^2}\right)\left(\frac{\delta_+}{\delta_-}
  +\frac{{\delta^*}^2}{\delta_-^2}\right)
  \right]^{1/2} > 2\frac{{\delta^*}^2}{\delta_-^2} = 2\times 10^4
$$
for this geometry. If we choose $\eta=k$ for small $k$ then, indeed, we see significant blowup as $k\to0$, as for the case of the square (indeed the values of $\|A^{-1}_{k,\eta}\|$ are similar). But, if we choose $\eta =\eta^* := 1/{R_0(1-\ln(k R_0))}$, we know from Theorem \ref{thmcond1} that $\|A_{k,\eta}^{-1}\|$ must stay bounded as $k\rightarrow 0$. In fact we see some mild, logarithmic growth in Table \ref{table:recteps002_3}, but $\|A^{-1}_{k,\eta^*}\|$ is never larger than 26 for the range of $k$ shown.

For the computations in Table \ref{table:recteps002_3} we used $100$ elements on each boundary segment. Comparing the algebraic rates $p$ associated with $\|A_{k,k}^{-1}\|$ for
the first three wavenumbers $k=10^{-5},10^{-4},10^{-3}$ it appears possible that the discretisation does not fully resolve $\|A_{k,k}^{-1}\|$ for $k=10^{-5}$ and that the exact value may be higher. However, due to convergence issues of the underlying singular value decomposition for the norm computation no finer discretisation could be used here.

\begin{table}[htbp]
\begin{center}
  \begin{tabular}{|r|r|rr|r|rr|}
  \hline
    $k$ & $\|A_{k,k}\|$ & $\|A_{k,k}^{-1}\|$ & $p$  & $\|A_{k,\eta^*}\|$ & $\|A_{k,\eta^*}^{-1}\|$ & $p$ \\
  \hline
    $10^{-5}$ & 1.978 &  3087.76 &   &1.978 &   25.165 &   \\
    $10^{-4}$&  1.978 &   924.25&   -0.53 &1.978&  21.533 & -0.07 \\
    $10^{-3}$&  1.978 &   128.19&   -0.86   &1.978   &  17.581 & -0.09  \\
    $10^{-2}$&  1.978 &  45.28&      -0.45  &1.978  & 14.078  & -0.10\\
    $10^{-1}$&   1.978 &   14.43   &   -0.50 & 1.978 & 10.384 & -0.13\\
    $1$ &    2.050        &   4.53 &     -0.50 &  3.310&  3.570 & -0.46\\
   \hline
  \end{tabular}
  \caption{{\bf Rectangle of side lengths 2 and 0.02.} Galerkin BEM approximations to $\|A_{k,k}\|$, $\|A_{k,k}^{-1}\|$, $\|A_{k,\eta^*}\|$ , and $\|A_{k,\eta^*}^{-1}\|$.}

  \label{table:recteps002_3}
\end{center}
\end{table}

\subsection{Rectangular cavity} In the last two numerical examples we explore trapping domains, as studied theoretically in \S\ref{sec:trapping}.
The rectangular cavity in Figure \ref{fig:shapes1} is defined by the polygon with the following coordinates: $p_0=0$, $p_1=(-c,0)$, $p_2=(-c, -\ell)$, $p_3=(\ell,-\ell)$, $p_4=(\ell,2c-\ell)$, $p_5=(-c,2c-\ell)$, $p_6=(-c,2a)$, $p_7=(0,2a)$. Here, $a=\pi/10$, $c=1$, and $\ell=c-a$. The width of the cavity is $2a=\pi/5$. Hence we expect resonance values in the negative half of the complex plane close to $k=5m$, $n\in\N$ (see \cite[Figure 5.12]{BeSp:10} for numerical computations of exterior resonances for this cavity). Since for $k=5m$ the width of the cavity is an integer multiple of half a wavelength, Theorem~\ref{th:trapping} applies and implies that, for $k=5m$ and $m\in \N$,
$$
\|A^{-1}_{k,\eta}\| \geq C k^{9/10},
$$
for both $\eta = k$ and $\eta = k^{2/3}$. Hence, as $k\rightarrow\infty$, $\|A^{-1}_{k,k}\|$ will grow, a behaviour not observed in the previous examples.

Estimates
for $\|A_{k,k}\|$, $\|A_{k,k}^{-1}\|$,
$\|A_{k,k^{2/3}}\|$ and $\|A_{k,k^{2/3}}^{-1}\|$ for various $k$ are
shown in Table~\ref{table:trap4} below.
As expected, the value of $\|A_{k,k}^{-1}\|$ grows at  a rate at least as fast as $k^{9/10}$, and $\cond A_{k,k}$ grows at approximately the rate $k^{7/5}$ predicted by Theorem \ref{th:condtr1}. The growth of $\|A_{k,k^{2/3}}^{-1}\|$ and $\cond A_{k,k^{2/3}}$ are more erratic. Theorem \ref{th:6} implies that $\|D_k\|\geq Ck^{1/4}$, and $\|S_k\|\leq C k^{1/2}$ by \eqref{eq:Sk2Dupper}, so that, for this obstacle, $\|A_{k,k^{2/3}}\|\geq C k^{1/4}$. Thus the theoretical lower bound for  $\cond A_{k,k^{2/3}}$ is  $\cond A_{k,k^{2/3}} \geq C k^{23/20}$, a slower rate of growth than that proved for $\cond A_{k,k}$. Nevertheless, $\cond A_{k,k}< \cond A_{k,k^{2/3}}$ for the larger values of $k$ in Table \ref{table:trap4}.

\begin{table}[htbp]
\begin{center}
  \begin{tabular}{|r|rr|rr|rr|rr|}
  \hline
    $k$ & $\|A_{k,k}\|$ & $p$ & $\|A_{k,k}^{-1}\|$ & $p$ & $\|A_{k,k^{2/3}}\|$ & $p$ & $\|A_{k,k^{2/3}}^{-1}\|$ & $p$ \\
  \hline
    5 &  4.835 &      &         1.969 &      & 3.575 &    &  3.075  &    \\
   10 &  5.201 & 0.11 &    3.121 & 0.66 & 3.580 &   0.00 &  5.990 &   0.96 \\
   20 &  5.629 & 0.11 &    5.539 & 0.83 & 3.594 &   0.01 &  9.348&   0.64 \\
   40 &  6.182 & 0.14 &    10.322 & 0.90 &  3.790 &   0.08 &  19.029 &   1.03 \\
   80 &  8.112 & 0.39 &    19.774 & 0.94 &  4.223 &   0.16 &  28.528 &   0.58 \\
  160 & 11.066 & 0.45 &   38.351  & 0.96 &  4.788 &   0.18 &  173.563 &   2.60 \\
  320 & 15.254 & 0.46 &   75.156 & 0.97 &  5.483 &    0.20    & 277.480 &     0.68   \\
  \hline
  \end{tabular}
  \caption{{\bf Rectangular cavity.} Galerkin BEM approximations to $\|A_{k,k}\|$, $\|A_{k,k}^{-1}\|$, $\|A_{k,k^{2/3}}\|$ and $\|A_{k,k^{2/3}}^{-1}\|$.}
  \label{table:trap4}
\end{center}
\end{table}

\subsection{Elliptic cavity} The elliptic cavity in Figure \ref{fig:shapes1} is defined by two elliptic arcs. The first one is parameterised as $(\cos t,0.5\sin t)$, $t\in[-\phi_0,\phi_0]$, with $\phi_0=7\pi/10$ and the second arc is defined by $(1.3\cos t,0.6\sin t)$, $t\in[-\phi_1,\phi_1]$, with $\phi_1=\arccos(\frac{1}{1.3}\cos\phi_0)$. As discussed in \S\ref{sec:trapping}, we expect large values of $\|A_{k,k}^{-1}\|$ at $k$ values corresponding to so-called `bouncing ball' eigenmodes of the inner ellipse, which has semi-axes 1 and 0.5. Some of these modes, discussed in detail in the appendix, are shown in Figure \ref{fig:ellipsemodes}. As $k$ grows, these modes become more localized around the centre of the ellipse, i.e.\ around the stable periodic orbit (see \S\ref{sec:trapping} and the appendix).

Estimates for  $\|A_{k,k}\|$ and $\|A_{k,k}^{-1}\|$ for the four wavenumbers from Figure \ref{fig:ellipsemodes} are shown in Table \ref{table:ellipsecavity2}. We expect, from Theorems \ref{th:trapping2} and \ref{th:trapcond}, exponential growth of $\|A_{k,k}^{-1}\|$ and $\cond A_{k,k}$ with $k$. Comparing the values of $\|A_{k,k}^{-1}\|$ for the two lowest wavenumbers a large growth is clearly visible. However, the growth of $\|A_{k,k}^{-1}\|$ then levels off. It may be that the given discretisation with  about $10$ elements per wavelength is not sufficient
to resolve the large norm of the inverse, or that, due to discretisation error, the resonance wavenumber $k$ is shifted somewhat from the theoretically predicted value.
Repeating the computation with  approximately $20$ elements per wavelength,
we see the computed values for $\|A_{k,k}^{-1}\|$ for larger $k$ are significantly larger now.
However, the growth of the inverse still levels off, and we do not observe exponential growth even with this finer discretisation. We note, however, that with this finer discretisation we see, at the highest value of $k$, a condition number $\cond A_{k,k}\approx 140,000$, hugely larger than the values observed in any of the previous examples, including the rectangular cavity.

\begin{figure}
\center
\includegraphics[width=12cm]{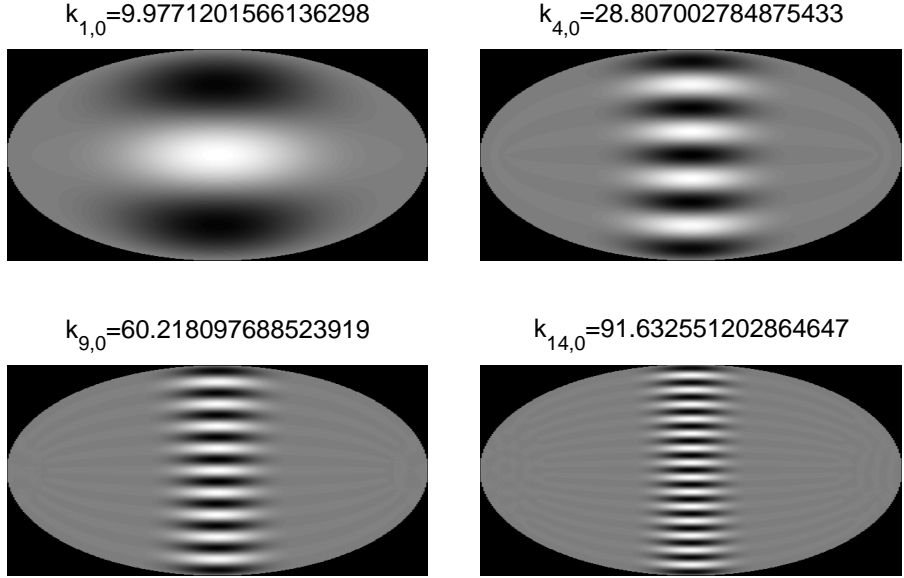}
\caption{Some exponentially localised modes of the ellipse associated with zeros of the radial Mathieu function of order $0$. In the notation of the appendix, the modes plotted are $u_{m,0}$ for $m=1,4,9,$ and 14, corresponding to the  wavenumbers $k_{m,0}$, $m=1,4,9,14$.}
\label{fig:ellipsemodes}
\end{figure}

\begin{table}[htbp]
\begin{center}
  \begin{tabular}{|r|rr|r|r|}
  \hline
    $k$ & $\|A_{k,k}\|$ & $p$ & $\|A_{k,k}^{-1}\|$ & $\|A_{k,k}^{-1}\|$ (refined) \\
\hline
    9.977 & 5.595 & & 67.3  & 70.1 \\
   28.807 &  7.294 & 0.25 & 2829.9 & 9182.7\\
   60.218 &  8.820 & 0.26 & 5265.0 & 13373.8 \\
   91.633 & 10.144 & 0.33 & 6543.8 & 14258.6\\
   \hline
  \end{tabular}
  \caption{{\bf Elliptic cavity.} Galerkin BEM approximations to $\|A_{k,k}\|$ and $\|A_{k,k}^{-1}\|$. For $\|A_{k,k}^{-1}\|$ the results of a refined computation with twice the number of degrees of freedom and approximately $20$ elements per wavelength are shown.}
  \label{table:ellipsecavity2}
\end{center}
\end{table}

\section{Conclusions}
\label{sec:conc}
\setcounter{equation}{0}
In this paper we have, in \S\ref{sec:previous}, summarised what is known regarding upper and lower bounds on the norms of the acoustic single- and double-layer potential operators, $S_k$ and $D_k$, and the combined layer potential operator $A_{k,\eta}$, with an emphasis on how these bounds behave as a function of frequency, and the influence of the shape of the boundary. We have also proved sharper upper bounds on $\|S_k\|$ and $\|A_{k,\eta}\|$ for low $k$, have summarised what upper and lower bounds on $\|A_{k,\eta}^{-1}\|$ are known, and have shown that exponential growth of $\|A_{k,\eta}^{-1}\|$ is possible as $k\to\infty$ through some sequence of wave numbers, in the case of a certain class of 2D trapping obstacles. Finally, we have discussed the condition number $\cond A_{k,\eta}$, proving that it remains bounded as $k\to0$ with appropriate choices of the coupling parameter $\eta$, and showing that, while it increases as $k\to\infty$ only as fast as $k^{1/3}$ for a circle or sphere, and at the rate $k^{1/2}$ for a starlike polygon, it grows exponentially, as $k$ increases through some sequence, for certain trapping obstacles.

In \S\ref{sec:discrete} we have explored the implications of these results for Galerkin BEM discretisations of these operators, showing that the norms of the Galerkin BEM matrices converge to the norms of the operators that they discretise, as the mesh is refined, and provided an orthonormal basis is used. Convergence to $\|A_{k,\eta}^{-1}\|$ of the norm of the inverse of the matrix corresponding to $A_{k,\eta}$ has also been proved in the case that $\Gamma$ is $C^1$. Thus we expect that the norm bounds at the continuous level in \S2 will apply also at the discrete level if the mesh is sufficiently refined.

This has been confirmed in \S\ref{sec:num} where we have explored a range of numerical examples, including shapes that are convex (both smooth and non-smooth), non-convex but starlike, and non-starlike trapping obstacles. The quantitative upper and lower bounds stated in \S\ref{sec:previous} are found to be upper and lower bounds also at the discrete level, and to be rather sharp in many of the examples. In almost all cases the observed rate of growth of norms and condition numbers as $k$ increases is in accordance with the possible range of behaviour suggested by the upper and lower bounds from \S\ref{sec:previous}, with the rates of growth mainly closer to the lower bounds of \S\ref{sec:lower}. The exception is that the exponential growth predicted in Theorems \ref{th:trapping2} and \ref{th:trapcond} is not observed numerically in the `Elliptic cavity' example, at least at the discretisations we use. On the other hand, the condition numbers observed in this case are as high as 140,000 and the values of $\|A_{k,k}^{-1}\|$ as high as 14,000, which contrasts with $\|A_{k,k}^{-1}\|\approx 1$ in all the cases where the scatterer $\Omega$ is starlike.

\vspace{2ex}

\renewcommand{\theequation}{A.\arabic{equation}}
\setcounter{equation}{0}
\renewcommand{\thetheorem}{A.\arabic{theorem}}
\setcounter{theorem}{0}
{\bf Appendix: Eigenmodes of the Ellipse.} In this appendix we summarise key properties of eigenmodes of the Laplacian in an elliptical domain.
Suppose $a_1>a_2>0$ and let $E = \{(x_1,x_2)\in \R^2 : (x_1/a_1)^2+(x_2/a_2)^2 \leq 1\}$ be the ellipse with semi-major axis $a_1$ and semi-minor axis $a_2$. Then we study in this appendix certain Laplace eigenmodes $u\in C^2(\bar E)$ satisfying
\begin{equation} \label{eigen}
\Delta u + k^2 u = 0 \mbox{ in }E, \quad u = 0 \mbox{ on } \partial E,
\end{equation}
for some $k>0$.

 Let $a=\sqrt{a_1^2-a_2^2}= a_1 \varepsilon$, where $\varepsilon = \sqrt{1-a_2^2/a_1^2}$ is the eccentricity of the ellipse, and introduce elliptical coordinates $(\mu,\nu)$, defined by
 $$
 x_1=a\cosh \mu \cos \nu \;\;\mbox{ and }\;\; x_2 = a \sinh \mu \sin \nu,
 $$
  in terms of which
 $$
 E =  \{(a\cosh \mu \cos \nu,a \sinh \mu \sin \nu):0\leq \mu \leq \mu_0, \; 0\leq\nu<2\pi\},
 $$
 where $\mu_0 := \tanh^{-1}(a_2/a_1)$. It is well known (see e.g.\ \cite{wilson}) that the Laplace operator separates in elliptical coordinates, and that in this coordinate system the Helmholtz equation can be written as
 \begin{equation} \label{he_ell}
 \left(\frac{\partial^2}{\partial \mu^2} + \frac{\partial^2}{\partial \nu^2} + k^2 a^2(\sinh^2\mu+\sin^2\nu)\right)u = 0.
 \end{equation}
 Seeking separation of variables solutions in the form $u(x)=M(\mu)N(\nu)$, we see that \eqref{he_ell} implies that $N$ satisfies the circumferential (or standard) Mathieu equation
 \begin{equation} \label{mat_st}
N^{\prime\prime}(\nu) + (\alpha-2q\cos 2\nu)N(\nu) = 0,
 \end{equation}
 while $M$ satisfies the radial (or modified) Mathieu equation
 \begin{equation} \label{mat_mo}
M^{\prime\prime}(\mu) - (\alpha-2q\cosh 2\mu)M(\mu) = 0.
 \end{equation}
 In these equations
 $$
 q = \frac{1}{4}(ka)^2 = \frac{1}{4}(ka_1)^2(1-(a_2/a_1)^2)
 $$
 and $\alpha$ is a separation constant. The solutions to \eqref{mat_st} that we are interested in are the solutions of period $2\pi$, satisfying $N(0)=N(2\pi)$ and $N^\prime(0)=N^\prime(2\pi)$. Since $N(-\nu)$ is such a solution if $N(\nu)$ is, it is clear that we may restrict attention to  periodic solutions of \eqref{mat_st} that are either even or odd.

 For this paper it is enough to focus on the even periodic solutions, so that we seek solutions $N$ of \eqref{mat_st} which satisfy $N^\prime(0)=N^\prime(\pi)$. This is an eigenvalue problem in which the eigenvalue is the separation constant $\alpha$. Standard Sturm-Liouville theory tells us that, for each value of the parameter $q>0$, there are a countable number of eigenvalues $\alpha = a_n(q)$, $n=0,1,\dots$, with $a_0(q)<a_1(q)<\dots$ and $a_n(q)\to\infty$ as $n\to\infty$ (we use in this appendix the standard notation for these eigenvalues and the corresponding eigenfunctions, see e.g.\  \cite[\S28.2(v)]{dlmf}). Further, to the eigenvalue $a_n(q)$ there corresponds a unique (to within multiplication by a constant), real-valued eigenfunction $N(\nu)$, which is usually denoted by $\mathrm{ce}_n(\nu,q)$ (for the standard normalization of $\mathrm{ce}_n$ see \cite[\S28.2(vi)]{dlmf}). The standard Sturm-Liouville theory tells us that $\mathrm{ce}_n(\nu,q)$ has precisely $n$ zeros in $(0,\pi)$. It is easy to see that $\mathrm{ce}_n(\nu,q)\pm\mathrm{ce}_n(\pi-\nu,q)$ is also an eigenfunction corresponding to $\alpha = a_n(q)$; thus the uniqueness of the normalised eigenfunction implies that, for $\nu\in \R$ and $n\geq 0$,
 \begin{equation} \label{even}
\mathrm{ce}_n(\nu,q) = \mathrm{ce}_n(\pi-\nu,q) \mbox{ if $n$ is even}, \quad \mathrm{ce}_n(\nu,q) = -\mathrm{ce}_n(\pi-\nu,q) \mbox{ if $n$ is odd}.
 \end{equation}

 Given that $\alpha= a_n(q)$, for some $n\geq0$, and that $N = \mathrm{ce}_n(\cdot, q)$, it is a standard result (e.g.\ \cite{mclachlan}) that $u(x)=M(\mu)N(\nu)$ satisfies the Helmholtz equation \eqref{eigen} in $E$ if and only if $M\in C^2(\R)$ is an even function that satisfies \eqref{mat_mo}. This uniquely specifies $M$ to within multiplication by a constant. The standard notation for this (real-valued) solution is  $M(\mu) = \mathrm{Mc}_n^{(1)}(\mu,q)$; see \cite[\S28.20(iv)]{dlmf} for the standard normalization. Thus we see that $u(x)=M(\mu)N(\nu)=\mathrm{Mc}_n^{(1)}(\mu,q)\mathrm{ce}_n(\nu, q)$ satisfies the full eigenvalue problem \eqref{eigen} if and only if
 \begin{equation} \label{mode}
 \mathrm{Mc}_n^{(1)}(\mu_0,q)=0.
 \end{equation}

 The complication in computing eigenmodes of the ellipse (for methods see \cite{wilson,neves10}) is that it is a multi-parameter spectral problem: to satisfy \eqref{mode} we have to find a pair $(\alpha,q)$ such that, simultaneously, \eqref{mat_st} has a periodic solution and \eqref{mat_mo} has a solution which is even if $N$ is even and which vanishes at $\mu_0$. Neves \cite{neves10} gives a proof based on multi-parameter spectral theory that for each pair $(m,n)\in \{0,1,\ldots\}^2$ there exists a unique $q_{m,n}>0$ such that \eqref{mode} holds with $\mathrm{Mc}_n^{(1)}(\cdot,q_{m,n})$ having $m$ zeros in $(0,\mu_0)$. The function
 \begin{equation} \label{eigenf}
 u(x)=u_{m,n}(x) := \mathrm{Mc}_n^{(1)}(\mu,q_{m,n})\mathrm{ce}_n(\nu, q_{m,n})
 \end{equation}
 is then an eigenfunction of \eqref{eigen} for $k=k_{m,n}:=\sqrt{4q_{m,n}}/a$. It is well known (e.g.\ \cite{CK83}) that the eigenvalues of the Laplace operator have infinity as the only accumulation point, so that $k_{m,n}\to \infty$ as $m+n\to\infty$.

 For some $\nu_0\in (0,\pi/2)$ let
\begin{eqnarray*}
 E_{\nu_0} &:=& \{(a\cosh \mu \cos \nu,a \sinh \mu \sin \nu):0\leq \mu <\mu_0, \; |\nu|< \nu_0 \mbox{ or }|\pi-\nu|<\nu_0\}\\
 &\supset & \{(x_1,x_2)\in E:|x_1|>a_1\cos \nu_0\}.
\end{eqnarray*}
 Let
 $$
 \rho_{\nu_0}(m,n) := \left\{\frac{\int_{E_\nu} (u_{m,n})^2\,dx}{\int_{E} (u_{m,n})^2\,dx} \right\}^{1/2}.
 $$
 Our particular interest in this appendix is in families of eigenfunctions that are exponentially localised around the periodic orbit $\{(0,x_2):|x_2|\leq a_2\}$. In particular we will show below that the family $u_{m,0}$, $m= 0,1,\ldots$ is so localised; precisely, we will show that, for all $\nu_0\in (0,\pi/2)$, there exists $\beta>0$ such that $\rho_{\nu_0}(m,0)= O(\re^{-\beta k_m})$ as $m\to\infty$.

 Noting \eqref{even}, we see that
 \begin{eqnarray*}
 \int_{E_\nu} (u_{m,n})^2\,dx & = & 4 a^2\int_0^{\nu_0}\int_0^{\mu_0} (\sinh^2\mu + \sin^2\nu)\left(\mathrm{Mc}_n^{(1)}(\mu,q_{m,n})\mathrm{ce}_n(\nu, q_{m,n})\right)^2d\mu d\nu.
 \end{eqnarray*}
 Thus, defining
 $$
 M_j := \int_0^{\mu_0} (\sinh\mu)^{2j}\left(\mathrm{Mc}_n^{(1)}(\mu,q_{m,n})\right)^2d\mu, \quad I_s(m,n) := \int_{0}^{s} \left(\mathrm{ce}_n(\nu, q_{m,n})\right)^2 d\nu,
 $$
 it holds that
 \begin{eqnarray} \nonumber
 \left(\rho_{\nu_0}(m,n)\right)^2 &\leq &\frac{ (M_1  + M_0\sin^2\nu_0) I_{\nu_0}(m,n)}{M_1 I_{\pi/2}(m,n) +  M_0\sin^2\nu_0(I_{\pi/2}(m,n) - I_{\nu_0}(m,n))}\\
 \label{rhobound}
 & \leq & \frac{ I_{\nu_0}(m,n)}{I_{\pi/2}(m,n) - I_{\nu_0}(m,n)}.
 \end{eqnarray}

 It is sufficient for the needs of this paper to estimate the asymptotics as $m\to\infty$ of $\rho_{\nu_0}(m,n)$ for $n=0$, and so we will restrict our attention to this case. For this purpose, and abbreviating $q_{m,0}$ as $q_m$ and $k_{m,0}$ as $k_m$, recall that $\mathrm{ce}_0(\nu, q_{m})$ satisfies \eqref{mat_st} with $q = q_{m}$ and with $\alpha = a_0(q_{m})$. Now the asymptotics of the eigenvalue $a_0(q)$ as $q\to\infty$ are known. From \cite{dlmf} we have that
 \begin{equation} \label{a0q}
 a_0(q) = -2q + q^{1/2} + O(1)
 \end{equation}
 as $q\to\infty$.
 Thus we see that, for $m$ large and with $N= \mathrm{ce}_0(\cdot, q_{m})$, the coefficient $a_0(q_m)-2q_m\cos 2\nu$ of $N(\nu)$ in \eqref{mat_st} is negative except in small neighbourhoods of $\pm \pi/2$ of length $O(q_m^{-1/4}) = O(k_m^{-1/2})$. It is this which causes the exponential localisation of $u_{m,0}$ around the periodic orbit.

 To see this localization completely explicitly, we will use the following lemma which depends on standard weighted space arguments (cf.\ \cite{agmon82}, \cite[\S3]{HislopSegal}). In this lemma and subsequently $BC(\R)$ denotes the set of functions $\phi:\R\to\R$ that are bounded and continuous and $BC^k(\R):= \{\phi\in BC(\R):\phi^{(j)}\in BC(\R) \mbox{ for } j = 0,1,\ldots,k\}$. As usual, $H^s(\R)$, for $s\geq 0$, denotes the standard Sobolev space of order $s$ (which in this appendix we take to be a space of real-valued functions). For $\beta>0$ and $\alpha\geq 0$ let $w_{\beta,\alpha}:\R\to\R$ denote the weight function
 $$
 w_{\beta,\alpha}(s) = \exp\left(-\beta \sqrt{\alpha^2+s^2}\, \right),
 $$
and, for $\phi\in BC(\R)$, let
 $$
 \|\phi\|_{\beta,\alpha} := \left\{\int_{-\infty}^\infty (w_{\beta,\alpha}(s)\phi(s))^2 ds \right\}^{1/2} < \infty.
 $$

 \begin{lemma} \label{weighted}
 Suppose that $p\in BC(\R)$ and that, for some $c>0$, $p(s) \leq -c^2 < 0$, $s\in \R$. Suppose also that $g\in BC(\R)$, $v\in BC^2(\R)$, and
 $$
 v^{\prime\prime}(s) + p(s) v(s) = g(s), \quad s\in \R.
 $$
 Then, for $0<\beta<c$ and $\alpha\geq 0$,
 \begin{equation} \label{bbound}
 \|v\|_{\beta,\alpha} \leq \frac{1}{c^2-\beta^2}\,\|g\|_{\beta,\alpha}.
\end{equation}
\end{lemma}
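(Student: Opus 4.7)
The plan is to use a standard weighted multiplier argument in the Agmon spirit. Multiply the ODE $v'' + pv = g$ by $w_{\beta,\alpha}^2 \, v$ and integrate over $\mathbb{R}$. Since $v$ and $v'$ are bounded while $w_{\beta,\alpha}$ decays exponentially, all boundary terms at $\pm\infty$ vanish, and an integration by parts on the $w^2 v v''$ term yields
\[
-\int_{-\infty}^{\infty} w^2 (v')^2 \, ds - 2\int_{-\infty}^{\infty} w w' v v' \, ds + \int_{-\infty}^{\infty} w^2 p\, v^2 \, ds = \int_{-\infty}^{\infty} w^2 v g \, ds,
\]
where I abbreviate $w = w_{\beta,\alpha}$. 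Rearranging and using $-p \geq c^2$ gives
\[
c^2 \|v\|_{\beta,\alpha}^2 + \int w^2 (v')^2 \, ds + 2\int w w' v v' \, ds \leq -\int w^2 v g \, ds.
\]

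The next step is to control the cross term using the explicit size of $w'/w$. A direct computation gives $w'(s) = -\beta s (\alpha^2 + s^2)^{-1/2}\, w(s)$, so $|w'(s)| \leq \beta\, w(s)$ pointwise. Writing $2 w w' v v' = 2(w'/w)\,(wv)\,(wv')$ and applying Young's inequality $2|ab| \leq a^2 + b^2$ with $a = \beta w v$ and $b = w v'$ gives
\[
\bigl| 2 w w' v v' \bigr| \leq \beta^2 w^2 v^2 + w^2 (v')^2.
\]
Substituting this bound into the previous inequality, the $\int w^2 (v')^2$ term is absorbed, leaving
\[
(c^2 - \beta^2)\,\|v\|_{\beta,\alpha}^2 \leq -\int w^2 v g \, ds \leq \|v\|_{\beta,\alpha}\, \|g\|_{\beta,\alpha}
\]
by Cauchy--Schwarz, and dividing through by $\|v\|_{\beta,\alpha}$ (trivial if $v\equiv 0$) yields \eqref{bbound}.

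There is no serious obstacle; the only point requiring a little care is justifying the vanishing boundary terms and the finiteness of all integrals, which follows because $v \in BC^2(\mathbb{R})$ and $w_{\beta,\alpha}$ decays like $\exp(-\beta |s|)$ as $|s|\to\infty$, and that the parameter $\alpha$ is handled uniformly through the pointwise bound $|w'/w|\leq \beta$ which does not depend on $\alpha$. The hypothesis $\beta < c$ enters precisely to make the coefficient $c^2 - \beta^2$ positive after absorbing the gradient term.
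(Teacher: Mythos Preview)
Your argument is correct. Both your proof and the paper's are weighted Agmon-type energy estimates exploiting the pointwise bound $|w'/w|\leq\beta$, but the executions differ. The paper substitutes $\psi=w_{\beta,\alpha}v$, derives the second-order ODE satisfied by $\psi$, sets up the associated bilinear form, and invokes the Lax--Milgram lemma to obtain the bound; the key step there is the integration-by-parts identity $(w'/w)'=w''/w-(w'/w)^2$, which collapses the diagonal part of $a(\phi,\phi)$ to $\int(\phi')^2-(p+(w'/w)^2)\phi^2$. You instead test the original equation directly against $w^2v$, obtain the energy identity in one integration by parts, and absorb the cross term $2\int ww'vv'$ via Young's inequality. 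Your route is shorter and avoids the Lax--Milgram machinery entirely, at the cost of not yielding the slightly stronger conclusion $\|\psi\|_1\leq\|g\|_{\beta,\alpha}/\sqrt{c^2-\beta^2}$ (control of the weighted derivative as well); since only the $L^2$ bound on $v$ is needed here, nothing is lost. The justification you give for vanishing boundary terms and finiteness of the integrals (bounded $v,v'$ against exponentially decaying $w$) is adequate, and your observation that the bound $|w'/w|\leq\beta$ is uniform in $\alpha$ neatly handles all $\alpha\geq0$ at once, whereas the paper treats $\alpha>0$ first and passes to $\alpha=0$ by dominated convergence.
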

\begin{proof}
Suppose $\beta >0$ and $\alpha>0$. Define $\psi\in BC^2(\R)\cap H^2(\R)$ by $\psi(s) = w_{\beta,a}(s)v(s)$, $s\in \R$. Then it is an easy calculation, abbreviating $w_{\beta,\alpha}$ as $w$, that
$$
\psi^{\prime\prime} -2\frac{w^\prime}{w} \psi^\prime + \left(p+2\left(\frac{w^\prime}{w}\right)^2 - \frac{w^{\prime\prime}}{w}\right) \psi = wg.
$$
Multiplying by a test function $\phi$ and integrating by parts, we see that
\begin{equation} \label{wf}
a(\psi, \phi) = b(\phi), \quad \phi\in H^1(\R),
\end{equation}
where the bilinear form $a: H^1(\R)\times H^1(\R)\to \R$ and the bounded linear functional $b:H^1(\R)\to \R$ are defined by
\begin{eqnarray*}
a(\phi,\psi) &:=& \int_{-\infty}^\infty \left(\psi^\prime\phi^\prime + 2\frac{w^\prime}{w} \psi^\prime \phi - \left(p+2\left(\frac{w^\prime}{w}\right)^2 - \frac{w^{\prime\prime}}{w}\right) \psi\phi\right) ds,\\ b(\phi) &:=& -\int_{-\infty}^\infty wg\phi\, ds.
\end{eqnarray*}
 Since $w^\prime/w$, $w^{\prime\prime}/w\in BC(\R)$, the bilinear form $a$ is bounded. For $\phi\in H^1(\R)$,
\begin{eqnarray*}
a(\phi,\phi) &=&   \int_{-\infty}^\infty \left((\phi^\prime)^2 + \frac{w^\prime}{w} (\phi^2)^\prime - \left(p+2\left(\frac{w^\prime}{w}\right)^2 - \frac{w^{\prime\prime}}{w}\right) \phi^2\right) ds\\
&  = &  \int_{-\infty}^\infty \left((\phi^\prime)^2 - \left(p+\left(\frac{w^\prime}{w}\right)^2\right) \phi^2\right) ds,
\end{eqnarray*}
where the last step follows by integration by parts, on noting that $(w^\prime/w)^\prime = w^{\prime\prime}/w - (w^\prime/w)^2$.  Since $-p-(w^\prime/w)^2\geq c^2-\beta^2$, $a$ is coercive if $\beta <c$, with
$$
a(\phi,\phi) \geq  \|\phi\|_1^2, \mbox{ where } \|\phi\|_1 :=  \left(\int_{-\infty}^\infty \left((\phi^\prime)^2 + \left(c^2-\beta^2\right)\phi^2\right) ds\right)^{1/2}.
$$
Applying the Lax-Milgram lemma, it follows from \eqref{wf} that
$$
\sqrt{c^2-\beta^2}\|v\|_{\beta,\alpha} \leq \|\psi\|_1 \leq \|b\| \leq \frac{\|g\|_{\beta,\alpha}}{\sqrt{c^2-\beta^2}},
$$
where $\|b\|$ denotes the norm of the linear functional $b:H^1(\R)\to \R$, with $H^1(\R)$ given the norm $\|\cdot\|_1$. Hence \eqref{bbound} holds for $\beta>0$ and $\alpha>0$, and so also for $\alpha=0$ by the dominated convergence theorem.
\end{proof}

To apply this lemma to \eqref{mat_st}, let $v:= \mathrm{ce}_0(\cdot, q)$ and define $p\in BC(\R)$ by $p(\nu)=a_0(q)-2q\cos 2\nu$, $\nu\in\R$. For $c>0$ write $p$ as $p=p_c^-+p_c^+$ where $p_c^- := \min(-c^2, p)$ and $p_c^+ := p - p_c^-$, and set $g_c:=-p_c^+v$. Then
$
v^{\prime\prime} + p_c^- v = g_c
$,
and applying Lemma \ref{weighted} with $\alpha=0$ it follows that, for $\nu_0\in (0,\pi/2)$,
\[
\left\{2\int_0^{\nu_0} v^2 ds\right\}^{1/2} \leq \re^{\beta \nu_0} \|v\|_{\beta,0} \leq \frac{\re^{\beta \nu_0}}{c^2-\beta^2}\,\|g_c\|_{\beta,0},
\]
for $0<\beta<c$. For $-2q<a_0(q)+c^2<2q$ we see that $g_c(\nu) = 0$ if $|\nu-j\pi| \leq \nu_c$, for some $j\in\Z$, where $\nu_c\in (0,\pi/2)$ is given by
\begin{equation} \label{nuc}
\nu_c := \frac{1}{2} \cos^{-1}\left(\frac{a_0(q)+c^2}{2q}\right).
\end{equation}
Thus, and since $0\leq p_c^+ \leq a_0(q)+2q+c^2< 4q$, it follows that
\begin{eqnarray*}
\|g_c\|^2_{\beta,0} &=& 2 \int_0^\infty \re^{-2\beta s} (g_c(s))^2\, ds\\
& \leq & 8q\sum_{j=0}^\infty \int_{j\pi+\nu_c}^{(j+1)\pi-\nu_c}\re^{-2\beta s} (v(s))^2\, ds\\
& \leq & 8q \frac{\re^{-2\beta \nu_c}}{1-\re^{-2\beta \pi}} \int_0^\pi v^2\, ds.
\end{eqnarray*}
So
$$
\left\{\int_0^{\nu_0} v^2 ds\right\}^{1/2} \leq 2 \sqrt{2q}\, \frac{\re^{-\beta(\nu_c- \nu_0)}}{(c^2-\beta^2)\left(1-\re^{-2\beta \pi}\right)^{1/2}}\left\{\int_0^{\pi/2} v^2 ds\right\}^{1/2}
$$
and choosing $\beta = c^2(\nu_c-\nu_0)/(1+\sqrt{1+(\nu_c-\nu_0)^2c^2\,}\,)\in (0,c)$, which minimises $\re^{-\beta(\nu_c- \nu_0)}/(c^2-\beta^2)$, we find that
\begin{equation} \label{mainbound}
\left\{\int_0^{\nu_0} v^2 ds\right\}^{1/2} \leq  \frac{(\nu_c-\nu_0)^2 \sqrt{2q}\,\re^{-\gamma}}{\gamma\left(1-\re^{-4\gamma \pi}\right)^{1/2}}\left\{\int_0^{\pi/2} v^2 ds\right\}^{1/2},
\end{equation}
where $\gamma := \delta/(1+\sqrt{1+\delta})$, with $\delta := c^2(\nu_c-\nu_0)^2$, provided that $-2q<a_0(q)+c^2<2q$ and $0<\nu_0<\nu_c$. Thus, assuming that $-a_0(q)/(2q)\in (-1,0)$ (which is certainly the case for all sufficiently large $q$ by \eqref{a0q}), and provided
\begin{equation} \label{nurange}
0<\nu_0 < \frac{1}{2}\cos^{-1}\left(\frac{a_0(q)}{2q}\right),
\end{equation}
\eqref{mainbound} holds for $0< c< \sqrt{2q\cos 2\nu_0 - a_0(q)\,}$. It particular, choosing
\begin{equation} \label{cform}
c := \frac{1}{2}\sqrt{2q\cos 2\nu_0 - a_0(q)\,},
\end{equation}
we obtain the following result in which the formula for $\delta$ follows from $\delta = c^2(\nu_c-\nu_0)^2$, with $c$ given by \eqref{cform} and $\nu_c$ by \eqref{nuc}, recalling that $\cos^{-1}(-1+\alpha)=\pi-2\sin^{-1}\sqrt{\alpha/2}$, for $0\leq\alpha\leq 2$.
\begin{theorem} \label{mainthmapp}
Let $v:= \mathrm{ce}_0(\cdot, q)$ and suppose that $-a_0(q)/(2q)\in (-1,0)$ (which certainly holds for all sufficiently large $q$). Then, provided $\nu_0$ satisfies \eqref{nurange}, it holds that
\[
\left\{\int_0^{\nu_0} v^2 ds\right\}^{1/2} \leq  \frac{\phi_0^2 \sqrt{2q}\,\re^{-\gamma}}{\gamma\left(1-\re^{-4\gamma \pi}\right)^{1/2}}\left\{\int_0^{\pi/2} v^2 ds\right\}^{1/2},
\]
where $\phi_0 = \pi/2-\nu_0$, $\gamma = \delta/(1+\sqrt{1+\delta})$,
$$
\delta = q(\sin^2\phi_0-r(q))\left(\phi_0 - \sin^{-1}\left(\frac{1}{2}\sqrt{\sin^2\phi_0+3r(q)\,}\,\right)\right)^2.
$$
and $r(q):= (1+a_0(q)/(2q))/2$.
\end{theorem}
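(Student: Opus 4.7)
My plan is to observe that essentially all of the analytic work leading to Theorem~A.1 has already been assembled in the discussion just preceding its statement; what remains is a specialization and change of variables. Concretely, I would begin from the general estimate \eqref{mainbound}, which was established by applying Lemma~A.1 to the Mathieu equation \eqref{mat_st} with $N=v=\mathrm{ce}_0(\cdot,q)$, splitting its coefficient $p(\nu)=a_0(q)-2q\cos 2\nu$ into its ``negative'' and ``positive'' parts, and then estimating the $w_{\beta,0}$-weighted norm of the source term $g_c=-p_c^+v$ by exploiting that $g_c$ vanishes on the intervals $[j\pi-\nu_c,j\pi+\nu_c]$, with $\nu_c$ given by \eqref{nuc}. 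The bound \eqref{mainbound} is valid for any $c$ with $0<c<\sqrt{2q\cos 2\nu_0 - a_0(q)}$, with $\delta=c^2(\nu_c-\nu_0)^2$.

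The first concrete step would be to make the symmetric choice $c=\tfrac12\sqrt{2q\cos 2\nu_0-a_0(q)}$, so that $c^2=\tfrac14(2q\cos 2\nu_0 - a_0(q))$. The hypothesis \eqref{nurange} is exactly what is needed for this $c$ to be real and positive, and it simultaneously guarantees $\nu_c>\nu_0$, which is also needed for the estimate. Then I would translate $c^2$ into the theorem's variables by using $\phi_0=\pi/2-\nu_0$, which gives $\cos 2\nu_0=2\sin^2\phi_0-1$, together with $a_0(q)/(2q)=2r(q)-1$ from the definition of $r(q)$. A short calculation then yields $c^2=q(\sin^2\phi_0-r(q))$, matching the first factor in the formula for $\delta$.

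The next step is to rewrite $\nu_c-\nu_0$ in the variables $(\phi_0,r(q))$. Substituting the above $c^2$ into \eqref{nuc}, the argument of the arccosine reduces to $\tfrac12(3r(q)+\sin^2\phi_0)-1$. Invoking the identity $\cos^{-1}(-1+\alpha)=\pi-2\sin^{-1}\sqrt{\alpha/2}$ (valid for $0\le\alpha\le 2$, which here needs the elementary check that $3r(q)+\sin^2\phi_0\in[0,4]$, immediate from $r(q)\in(0,1/2)$ and $\sin^2\phi_0\in(r(q),1]$), I obtain $\nu_c=\pi/2-\sin^{-1}\!\bigl(\tfrac12\sqrt{\sin^2\phi_0+3r(q)}\bigr)$. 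Hence $\nu_c-\nu_0=\phi_0-\sin^{-1}\!\bigl(\tfrac12\sqrt{\sin^2\phi_0+3r(q)}\bigr)$, and combining with the formula for $c^2$ gives precisely the $\delta$ stated in the theorem, from which $\gamma=\delta/(1+\sqrt{1+\delta})$ is exactly the one appearing in \eqref{mainbound}.

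To finish, I would simply bound the prefactor $(\nu_c-\nu_0)^2$ on the right-hand side of \eqref{mainbound} by $\phi_0^2$; this is the only place where any inequality, rather than an equality, is used, and it comes from the trivial observation that $\sin^{-1}(\cdot)\ge 0$, so $\nu_c\le\pi/2$ and therefore $\nu_c-\nu_0\le\phi_0$. Substituting this into \eqref{mainbound} produces exactly the bound claimed in Theorem~A.1. In short, the argument is essentially bookkeeping: the genuine analytic content sits in Lemma~A.1 and the derivation of \eqref{mainbound}, and the only small pitfall I would watch is verifying the range condition for the arccosine identity and carefully tracking the definitions of $r(q)$, $\phi_0$ so that the algebraic simplifications collapse to the stated form of $\delta$.
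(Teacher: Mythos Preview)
Your proposal is correct and follows essentially the same approach as the paper: specialize \eqref{mainbound} with the choice \eqref{cform}, rewrite in terms of $\phi_0$ and $r(q)$ using the arccosine identity, and bound $(\nu_c-\nu_0)^2$ by $\phi_0^2$. The algebraic verifications you outline (for $c^2$, for $\nu_c$, and hence for $\delta$) are exactly those the paper invokes in the sentence immediately preceding the theorem statement.
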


By \eqref{a0q}, $r(q) \sim q^{-1/2}/2$ as $q\to\infty$. Thus, in the above theorem, as $q\to\infty$ $\delta$ has the asymptotic behaviour $\delta \sim q\sin^2\phi_0\left(\phi_0 - \sin^{-1}\left(\frac{1}{2}\sin\phi_0\right)\right)^2$. Thus, and since $\sin^{-1}\left(\frac{1}{2}\sin\phi_0\right)< \frac{1}{2}\phi_0$, it holds that $\delta > \frac{1}{4}q\phi_0^2\sin^2\phi_0$ for all sufficiently large $q$. Further,  $\gamma =  \delta^{1/2} - 1 +O(\delta^{-1/2})$ as $\delta\to\infty$. Thus the above theorem has the following corollary.

\begin{corollary} \label{cor_final}
Let $v:= \mathrm{ce}_0(\cdot, q)$. Then, for every $\nu_0\in (0,\pi/2)$, it holds for all sufficiently large $q$ that
\[
\left\{\int_0^{\nu_0} v^2 ds\right\}^{1/2} \leq  \frac{2\sqrt{2}\,\re\,\phi_0}{\sin\phi_0}\,\exp\left(-\frac{1}{2}\sqrt{q}\phi_0\sin \phi_0\right) \left\{\int_0^{\pi/2} v^2 ds\right\}^{1/2},
\]
where $\phi_0 = \pi/2-\nu_0$.
\end{corollary}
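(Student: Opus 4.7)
The plan is to derive this corollary as an asymptotic evaluation of Theorem \ref{mainthmapp}, so what remains is only careful bookkeeping of the constants $\delta$, $\gamma$, and the prefactor as $q\to\infty$.

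First I would read off from \eqref{a0q} that $a_0(q)/(2q) = -1 + (2\sqrt q)^{-1} + O(q^{-1})$, and hence that $r(q) := (1 + a_0(q)/(2q))/2 = (4\sqrt q)^{-1} + O(q^{-1}) \to 0$ as $q\to\infty$. In particular $-a_0(q)/(2q)\in(-1,0)$ for all sufficiently large $q$, so Theorem \ref{mainthmapp} applies for any fixed $\nu_0\in(0,\pi/2)$, i.e.\ any fixed $\phi_0 = \pi/2-\nu_0\in(0,\pi/2)$. Substituting $r(q)\to 0$ into the formula for $\delta$ gives, in the limit,
\[
\delta\ \to\ q\,\sin^2\phi_0\,\Bigl(\phi_0 - \sin^{-1}\bigl(\tfrac12\sin\phi_0\bigr)\Bigr)^2
\]
in the sense that $\delta$ equals this expression times $1+o(1)$ as $q\to\infty$.

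Next I would invoke the elementary inequality $\sin^{-1}\bigl(\tfrac12\sin\phi_0\bigr) < \tfrac12\phi_0$ for $\phi_0\in(0,\pi/2)$. This is immediate from the strict concavity of $\sin$ on $[0,\pi/2]$: that concavity gives $\sin(\phi_0/2)>\tfrac12\sin\phi_0$, and applying the increasing function $\sin^{-1}$ to both sides yields the claim. Consequently $\phi_0 - \sin^{-1}(\tfrac12\sin\phi_0) > \tfrac12\phi_0$, so that $\delta > \tfrac14\, q\,\phi_0^2\sin^2\phi_0$ for all sufficiently large $q$, and in particular $\sqrt{\delta} \geq \tfrac12\sqrt q\,\phi_0\sin\phi_0$ for all sufficiently large $q$.

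Now I would estimate $\gamma = \delta/(1+\sqrt{1+\delta})$. Since $1+\sqrt{1+\delta}\leq \sqrt\delta + \sqrt\delta\sqrt{1+1/\delta} \leq 2\sqrt\delta(1+o(1))$, one obtains $\gamma \geq \sqrt\delta - 1$ for all sufficiently large $\delta$ (equivalently, for all sufficiently large $q$), and hence
\[
\re^{-\gamma}\ \leq\ \re\cdot\exp(-\sqrt\delta)\ \leq\ \re\cdot\exp\!\Bigl(-\tfrac12\sqrt q\,\phi_0\sin\phi_0\Bigr).
\]
From $\gamma\to\infty$ the factor $(1-\re^{-4\gamma\pi})^{-1/2}\to 1$, so it is bounded by, say, $\sqrt 2$ for $q$ large enough. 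Finally the prefactor $\phi_0^2\sqrt{2q}/\gamma$ is bounded asymptotically by $\phi_0^2\sqrt{2q}/(\tfrac12\sqrt q\,\phi_0\sin\phi_0)(1+o(1)) = (2\sqrt 2\,\phi_0/\sin\phi_0)(1+o(1))$, and combining these three estimates with Theorem \ref{mainthmapp} yields the claimed bound (possibly after absorbing the $1+o(1)$ factors into the constants; one can strengthen the final constant slightly to account for these if desired).

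The only place requiring real care is the interplay of the $-1$ in $\gamma\geq \sqrt\delta-1$ with the explicit constant $\re$ in the target inequality: it is precisely this term that produces the factor $\re$ in the statement, so one must keep the inequality $\gamma\geq \sqrt\delta-1$ rather than crude estimates that lose constants. Beyond that, no further ideas are needed.
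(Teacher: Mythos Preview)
Your proposal is correct and follows essentially the same route as the paper's own argument, which is the brief paragraph between Theorem~\ref{mainthmapp} and Corollary~\ref{cor_final}: use \eqref{a0q} to see $r(q)\to 0$, obtain the asymptotic $\delta\sim q\sin^2\phi_0\bigl(\phi_0-\sin^{-1}(\tfrac12\sin\phi_0)\bigr)^2$, invoke the strict inequality $\sin^{-1}(\tfrac12\sin\phi_0)<\tfrac12\phi_0$ to get $\delta>\tfrac14 q\phi_0^2\sin^2\phi_0$, and use $\gamma=\sqrt\delta-1+O(\delta^{-1/2})$. You add a little more detail than the paper (e.g.\ the concavity justification for the $\sin^{-1}$ inequality), and you correctly identify that the strict inequality gives exactly the slack needed to absorb all the $1+o(1)$ factors into the stated constant $2\sqrt{2}\,\re\,\phi_0/\sin\phi_0$.
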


Applying this corollary with $q=q_m = \frac{1}{4}(k_ma)^2$, and recalling the bound \eqref{rhobound}, and that $a=a_1\varepsilon$, where $\varepsilon$ is the eccentricity of the ellipse, and that $\phi_0/\sin \phi_0 < \pi/2$, we see that, for every $\nu_0\in (0,\pi/2)$, it holds for all sufficiently large $m$ that
\begin{equation} \label{finalfinal}
\rho_{\nu_0}(m,0) \leq \sqrt{2}\,\pi \re \,\exp\left(-\frac{1}{4}\varepsilon k_m a_1 \phi_0 \sin \phi_0\right),
\end{equation}
where $\phi_0 = \pi/2-\nu_0$.
\end{document}